\newcommand{\mc}{\mathcal}
\newcommand{\eps}{\varepsilon}
\renewcommand{\d}{\,\mathrm{d}}
\newcommand{\tr}{\mathrm{tr}}
\renewcommand{\div}{\mathrm{div}}
\DeclareMathOperator*{\essup}{ess\,sup}
\DeclareMathOperator*{\argmin}{argmin}
\DeclareMathOperator{\dir}{dir}
\def\N{\mathbb{N}}
\def\R{\mathbb{R}}
\def\C{\mathbb{C}}
\renewcommand{\to}{\rightarrow}
\numberwithin{equation}{section}
\newtheorem{thm}{Theorem}[section]
\newtheorem{defi}[thm]{Definition}
\newtheorem{prop}[thm]{Proposition}
\newtheorem{lemma}[thm]{Lemma}
\newtheorem{cor}[thm]{Corollary}
\theoremstyle{definition}
\newtheorem{rmk}[thm]{Remark}
\theoremstyle{remark}
\begin{document}
	\author[F. Riva]{Filippo Riva}
	
	\title[Elastic plates with cohesive slip]{Energetic evolutions for linearly elastic plates with cohesive slip}
	
	\begin{abstract}
		A quasistatic model for a horizontally loaded thin elastic composite at small strains is studied. The composite consists of two adjacent plates whose interface behaves in a cohesive fashion with respect to the slip of the two layers. We allow for different loading-unloading regimes, distinguished by the presence of an irreversible variable describing the maximal slip reached during the evolution. Existence of energetic solutions, characterized by equilibrium conditions together with energy balance, is proved by means of a suitable version of the Minimizing Movements scheme. A crucial tool to achieve compactness of the irreversible variable are uniform estimates in H\"older spaces, obtained through the regularity theory for elliptic systems. The case in which the two plates may undergo a damage process is also considered.
	\end{abstract}
	
	\maketitle
	
	{\small
		\keywords{\noindent {\bf Keywords:} cohesive interface, energetic solutions, linearized elasticity, minimizing movements.
		}
		\par
		\subjclass{\noindent {\bf 2020 MSC:}
			49J45,	
			70G75,	
			74A45,	
			74B20,	
			74K20.	
			
		}
	}

	\pagenumbering{arabic}
	
	\medskip
	
	\tableofcontents

	\section*{Introduction}
	
	In recent years, cohesive zone models have attracted the interest of the mathematical community, especially due to their challenging nature and in view of diverse applications in mechanics. Unlike models of brittle rupture in solids \cite{Griffith}, in which the material instantaneously breaks as soon as a certain threshold (called toughness) is reached, cohesive models \cite{Barenblatt, Dugdale} are characterized by more gradual processes, and the progression of the rupture phenomena directly depends on the amplitude of the breaking zone itself. Cohesive behaviours are usually observed and analyzed in the framework of fracture mechanics, see \cite{CagnToad, CrisLazzOrl, DMZan, NegSca,NegVit} and references therein, or in presence of interfaces between sliding materials \cite{AleFredd1d,AleFredd2d, BonCavFredRiva}.
	
	Within the second scenario, in this paper we propose to investigate a model describing the evolution of two elastic laminates, touching along their entire surface and thus producing cohesive effects in the common interface, extending the simplified one-dimensional situation depicted in \cite{BonCavFredRiva}. The interest in such model comes from engineering applications regarding the prediction of failures in thin multilayered materials and described in \cite{AleFredd1d,AleFredd2d}, where numerical simulations have been performed in a one-dimensional and two-dimensional setting, respectively. In the quoted contributions the elastic plates also experience a damaging process during the evolution; in the current work we prefer to primarily focus on the cohesive behaviour of the interface, and thus the material is firstly assumed to be unbreakable. We however incorporate the presence of damage effects at the end of the paper, thus providing a mathematical justification of \cite{AleFredd2d}
	
	We consider two adjacent elastic plates subject to a prescribed time-dependent horizontal loading $w(t)$ acting on a portion of their boundary. The shared interface behaves cohesively with respect to the reciprocal slip between the two layers; this response may be possibly caused by roughness of the two materials or by the presence of an adhesive film gluing them. In order to distinguish among loading phases, with dissipative nature, and unloading phases, usually elastic, a fundamental role is played by an irreversible variable $\delta_h$ representing the maximum amount of slip which has taken place during the evolution, and hence called history slip.
	
	The thickness of the interface is assumed to be very small compared with the thicknesses of the two laminates, which in turn are way smaller than the surface area of the laminates themselves. Hence, the reference configuration of both elastic plates (and hence also their interface) can be described by the same planar set $\Omega\subseteq\R^2$. We however point out that, although the physical dimension of the problem is $2$, throughout the whole paper we will consider an arbitrary space dimension $n\in\N$, since from the mathematical point of view all the proposed arguments still work without changes in $\R^n$.
	
	We limit ourselves to small deformations, so that the problem can be set in the context of linearized elasticity, and the behaviour of the plates can be described by means of the two displacements $u_1,u_2\colon[0,T]\times\Omega\to \R^3$. Since the loading $w$ acts horizontally, no trasversal or bending effects actually appear during the evolution, thus the whole model can be considered as bidimensional and, as a consequence, it is not restrictive to assume that $u_1$ and $u_2$ represent in fact the in-plane displacements, hence they are valued in $\R^2$ (in the sequel in $\R^n$) instead of $\R^3$. In this way, compenetration of the two laminates is automatically avoided and no incompenetration conditions are needed. Moreover, the loading $w$ is also assumed to act slowly with respect to the internal vibrations of the body, so that inertial effects can be neglected and the model can be included in a quasistatic setting.
	
	Among the several notions of solution to quasistatic problems \cite{MielkRoubbook}, we employ the variational concept of energetic solutions, characterized by two conditions: at each time the solution minimizes the internal energy of the system, which at once balances the work done by the external loading. In the present setting, the energy is described by the functional
	\begin{equation}\label{intro:toten}
		\mc F(u_1,u_2,\delta_h)= \underbrace{\sum_{i=1}^{2}\frac 12 \int_{\Omega} \C_i(x) e(u_i(x)):e(u_i(x))\d x}_\text{bulk elastic energy of the two plates}+\underbrace{\int_{\Omega}\Phi(|u_1(x)-u_2(x)|,\delta_h(x))\d x}_\text{cohesive interfacial energy},
	\end{equation}
	where $e(u_i)$ denotes the strain tensor and $\C_i\colon \Omega\to\R^{2\times 2\times 2\times 2}$ is the fourth order elastic tensor of the $i$th layer, while $\Phi\colon [0,+\infty)^2\to[0,+\infty)$ is the cohesive energy density, which accounts for both loading ($\delta_h=|u_1-u_2|$) and unloading regimes ($\delta_h>|u_1-u_2|$). We point out that the irreversible variable $\delta_h$, which we recall models the maximal amount of occurred slip, is not an independent variable: indeed, as we will see, it explicitely depends on the displacement fields $u_1, u_2$ via the formula \eqref{historyslip}.
	
	A common procedure, which we also follow in this paper, in order to show existence of energetic solutions involves the celebrated Minimizing Movements algorithm. It consists in a time-discretization procedure followed by a recursive minimization scheme (for the displacements); the time-continuous evolution is then recovered by sending the discretization parameter $\tau$ to zero. In the current model, in order to deal with the cohesive law, this scheme is combined with a reiterated update of the discrete history slip variable, in order to preserve irreversibility.

	Compared to the one-dimensional case analyzed in \cite{BonCavFredRiva}, the major difficulty appearing in the current situation consists in finding good compactness estimates for the discrete irreversible variable, allowing for suitable convergences when the parameter $\tau$ vanishes. Indeed, a crucial tool used in the one-dimensional analysis was the embedding of the Sobolev space $H^1(a,b)$ into the space $C^\frac 12([a,b])$ of $\frac 12$-H\"older continuous functions, in order to retrieve equicontinuity of the discrete approximations. Since in higher dimensions the space $H^1(\Omega)$ is not even embedded in $L^\infty(\Omega)$, no equicontinuity properties are a priori expected anymore.
	
	We overcome this problem by exploiting the fact that the Minimizing Movements algorithm selects, at each step of the discretization process, global minimizers of the total energy \eqref{intro:toten}. The strategy is based on the computation of the Euler-Lagrange equations of the functional $\mc F$, which formally take the form
	\begin{equation}\label{intrEL}
		\begin{cases}\displaystyle
			-\div(\mathbb{C}_1 e(u_1))=-\partial_y\Phi(|u_1-u_2|,\delta_h)\frac{u_1-u_2}{|u_1-u_2|},&\text{in }\Omega,\\\displaystyle
			-\div(\mathbb{C}_2 e(u_2))=\partial_y\Phi(|u_1-u_2|,\delta_h)\frac{u_1-u_2}{|u_1-u_2|},&\text{in }\Omega.
		\end{cases}
	\end{equation}	
	The validity of the above equations, combined with Calder\'on-Zygmund $L^p$-regularity theory for elliptic systems, allows us to regain the needed H\"older estimates in order to complete the compactness argument. Anyway, a technical issue for the attainment of \eqref{intrEL} relies in the nondifferentiablity of the density $\Phi$ at the origin, indeed the presence of a kink is a crucial feature in cohesive laws \cite{Barenblatt}. The argument is thus made rigorous by introducing a suitable smooth approximation $\Phi_\eps$ of the cohesive density.

	The paper is organized as follows. In Section~\ref{sec:setting} we describe in details the mechanical model under consideration, explaining all the assumptions we require. We then present the rigorous definition of energetic solutions for the cohesive interface model, and we state our main existence result. Section~\ref{sec:regularizedenergy} is devoted to the construction of the regularized version of the cohesive density, which will be used in the Minimizing Movements algorithm. We then provide useful estimates, uniform both in the regularizing parameter $\eps$ and in the discretization parameter $\tau$, by means of energetic arguments and by employing elliptic regularity theory. These uniform bounds will be used in Section~\ref{sec:proof} in order to obtain compactness of the piecewise constant interpolant of the discrete variables. A suitable version of Helly's Selection Theorem will be needed  in order to deal with the history slip. Finally, in Section~\ref{sec:damage}, we enhance the model by considering damageable elastic plates. This framework is described by the addition of two new irreversible variables $\alpha_1,\alpha_2\colon [0,T]\times\Omega\to [0,1]$ representing the amount of damage occuring in the two laminates. We show existence of energetic solutions also for this richer model, highlighting the differences which now arise due to the presence of damage.

	\section*{Notation and preliminaries}
	
	The maximum (resp. minimum) of two extended real numbers $\alpha,\beta\in \R\cup\{\pm\infty\}$ is denoted by $\alpha\vee\beta$ (resp. $\alpha\wedge\beta$).
	
	For a positive integer $n\in \N$, we denote by $\R^{n\times n}$ and $\R^{n\times n}_{\rm sym}$ the set of real $(n\times n)$-matrices and the subset of symmetric matrices. Given a matrix $A\in \R^{n\times n}$, we write $A_{\rm sym}:=\frac12 (A+A^T)\in \R^{n\times n}_{\rm sym}$ for its symmetric part. In the case $A=\nabla u$ we adopt the standard notation $e(u)$ in place of $(\nabla u)_{\rm sym}$. The Frobenius scalar product between two matrices $A, B \in \R^{n\times n}$ is $A:B=\tr(AB^T)$, and the corresponding norm is denoted by $|A|:=\sqrt{A:A}$. The standard scalar product between vectors $a,b\in \R^n$ is denoted by $a\cdot b$ and for the euclidean norm we still write $|a|$, without risk of ambiguity. The tensor product between two vectors $a,b\in \R^n$ is the matrix $a\otimes b\in \R^{n\times n}$ defined by $(a\otimes b)_{i,j}=a_ib_j$, and the symmetric tensor product is denoted by $a\odot b:=(a\otimes b)_{\rm sym}$.
	
	We adopt standard notations for Bochner spaces and for scalar- or vector-valued Lebesgue and Sobolev spaces, while by $L^0(\Omega)^+$ we mean the space of nonnegative Lebesgue measurable functions on the (open) set $\Omega\subseteq \R^n$. Given $\alpha\in (0,1]$, by $C^{0,\alpha}(\overline{\Omega})$ and $C^{0,\alpha}(\overline{\Omega};\R^m)$ we mean, respectively, the space of scalar- and $\R^m$-valued functions which are $\alpha$-H\"older continuous (Lipschitz continuous if $\alpha=1$) in $\overline\Omega$, endowed with the norm $\Vert\cdot\Vert_{C^{0,\alpha}(\overline{\Omega})}:=\Vert\cdot\Vert_{C^0(\overline{\Omega})}+[\cdot]_{\alpha,\overline\Omega}$, where $[f]_{\alpha,\overline\Omega}:=\sup\limits_{\substack{{x,y\in\overline\Omega}\\x\neq y}}\frac{|f(x)-f(y)|}{|x-y|^{\alpha}}$. In order to lighten the notation, we write the same symbol for the norms in $C^{0,\alpha}(\overline{\Omega})$ and in $C^{0,\alpha}(\overline{\Omega};\R^m)$; the meaning will be clear from the context. We do the same for norms in Lebesge or Sobolev spaces. We finally denote with $C^{0,\alpha}_{\rm loc}({\Omega})$ (resp. $C^{0,\alpha}_{\rm loc}({\Omega};\R^m)$) the space of functions belonging to $C^{0,\alpha}(\overline{\Omega'})$ (resp. $C^{0,\alpha}(\overline{\Omega'};\R^m)$) for all $\Omega'\subset\subset \Omega$, i.e. such that the closure of $\Omega'$ is still a subset of $\Omega$.
	
	Given a normed space $(X,\|\cdot\|_X)$, with the symbol $B([a,b];X)$ we mean the space of everywhere defined functions $f\colon [a,b]\to X$ which are bounded in $X$, namely $\sup\limits_{t\in [a,b]}\|f(t)\|_X<+\infty$. The spaces of absolutely continuous functions and functions of bounded variation from $[a,b]$ to $X$ are instead denoted by $AC([a,b];X)$ and $BV([a,b];X)$, respectively. We quote for instance the Appendix of \cite{Brez} for more details on these functional spaces.
	
	For ease of reading we recall here the well-known Sobolev Embedding Theorem and the Korn-Poincar\'e inequality \cite{KondratevOle, Pompe}:
	\begin{thm}[\textbf{Sobolev Embedding}]\label{SobEmb}
		Fix $n,m\in \N$, let $\Omega\subseteq\R^n$ be an open, bounded, connected set with Lipschitz boundary and let $p\in [1,+\infty]$.
		\begin{itemize}
			\item[(a)] If $p<n$, then $W^{1,p}(\Omega;\R^m)\hookrightarrow L^q(\Omega;\R^m)$ for all $q\in[1,p^*]$, with $p^*:=np/(n-p)$;
			\item[(b)] If $p=n$, then $W^{1,p}(\Omega;\R^m)\hookrightarrow L^q(\Omega;\R^m)$ for all $q\in[1,+\infty)$;
			\item[(c)] If $p>n$, then $W^{1,p}(\Omega;\R^m)\hookrightarrow C^{0,\alpha}(\overline{\Omega};\R^m)$ for all $\alpha\in(0,1-n/p]$.
		\end{itemize}
	All the above inclusions are continuous.
	\end{thm}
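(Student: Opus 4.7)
The plan is to reduce each embedding to an a priori estimate on smooth, compactly supported functions on $\R^n$, and then to establish the three classical inequalities underlying parts (a), (b), (c). The vectorial case reduces immediately to the scalar one componentwise since all norms on $\R^m$ are equivalent, so I may take $m=1$. Since $\Omega$ has Lipschitz boundary, there is a bounded linear Calder\'on--Stein extension operator $E\colon W^{1,p}(\Omega)\to W^{1,p}(\R^n)$ supported in a fixed bounded neighbourhood of $\overline\Omega$; combined with the density of $C_c^\infty(\R^n)$ in $W^{1,p}(\R^n)$ for finite $p$ (and a mollification argument when $p=+\infty$), this turns each of the three embeddings into a Sobolev-type inequality for $u\in C_c^\infty(\R^n)$.

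For part (a) I would prove the Gagliardo--Nirenberg--Sobolev inequality $\|u\|_{L^{p^*}(\R^n)}\leq C\|\nabla u\|_{L^p(\R^n)}$. The case $p=1$ follows from the pointwise bound $|u(x)|\leq\int_{\R}|\partial_i u|\d t$ in each coordinate direction $i$, taking the geometric mean of the $n$ resulting inequalities to control $|u|^{n/(n-1)}$ and then integrating iteratively via generalized H\"older. For general $p\in(1,n)$ one applies the $p=1$ estimate to $|u|^{\gamma}$ with $\gamma=p(n-1)/(n-p)$ and a single H\"older step on the resulting integrand. The embedding into $L^q$ for $q\in[1,p^*]$ follows from $L^{p^*}(\Omega)\hookrightarrow L^q(\Omega)$, which holds on bounded sets. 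Part (b) is then an immediate consequence: for any finite $q$, pick $p'<n$ with $q\leq (p')^*$ and chain $W^{1,n}(\Omega)\hookrightarrow W^{1,p'}(\Omega)\hookrightarrow L^q(\Omega)$, using $|\Omega|<+\infty$ for the first inclusion.

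For part (c) the key step is Morrey's inequality
\begin{equation*}
	|u(x)-u(y)|\leq C\,|x-y|^{1-n/p}\|\nabla u\|_{L^p(\R^n)},\qquad p>n.
\end{equation*}
I would derive it by estimating $|u(z)-\bar u_B|$ on a ball $B=B_r(x_0)$ through the representation $u(z)-\bar u_B=|B|^{-1}\int_B(u(z)-u(\xi))\d\xi$, parametrising radially from $z$ and controlling the resulting Riesz-type potential by H\"older. Taking two overlapping balls of radius $|x-y|$ centred at $x$ and $y$ yields the sharp exponent $\alpha=1-n/p$; the range $(0,1-n/p]$ is then recovered by interpolation on the bounded domain, and the corresponding $L^\infty$-bound comes from the same average-estimate applied to a single ball. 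The genuine obstacle in this programme is the extension step in the first paragraph, which is precisely where the hypothesis of Lipschitz boundary is consumed; once the extension is available, the remainder reduces to classical real-variable arguments, as carried out in full detail in the cited references \cite{KondratevOle, Pompe}.
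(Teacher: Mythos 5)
The paper does not prove this statement: it is recorded in the preliminaries purely as a recalled classical result, with no argument attached. (The citations \cite{KondratevOle, Pompe} next to the surrounding sentence pertain to the Korn--Poincar\'e inequality of Proposition~\ref{prop:korn}, not to the Sobolev embedding, so the final sentence of your sketch misattributes them.) There is therefore no "paper proof" to compare against. That said, your sketch is the standard and correct route: componentwise reduction to $m=1$; a Stein/Calder\'on extension, which is exactly where the Lipschitz-boundary hypothesis is consumed; the Gagliardo--Nirenberg--Sobolev inequality for $p=1$ via the iterated one-dimensional integration and generalized H\"older, upgraded to $1<p<n$ by applying it to $|u|^\gamma$; part~(b) by exhausting finite exponents through $p'<n$ near $n$ and using $|\Omega|<+\infty$; and Morrey's average estimate for part~(c), with lower H\"older exponents recovered trivially on a bounded set. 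The only place calling for a touch more care is $p=+\infty$ in case~(c): density of $C_c^\infty$ in $W^{1,\infty}(\R^n)$ genuinely fails, so the "mollification argument" must be read as proving the Lipschitz estimate for the mollifications $u_\eps$ with constant controlled by $\|\nabla u\|_{L^\infty}$ and then passing to the pointwise limit, not as an approximation in the $W^{1,\infty}$ norm.
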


\begin{prop}[\textbf{Korn-Poincar\'e inequality}]\label{prop:korn}
	Fix $n\in \N$, let $\Omega\subseteq\R^n$ be an open, bounded, connected set with Lipschitz boundary and let $\partial_D\Omega$ be a subset of $\partial\Omega$ with positive Hausdorff measure $\mc H^{n-1}(\partial_D\Omega)>0$. Fix $p\in (1,+\infty)$. Then there exists a constant $K_p>0$ such that
	\begin{equation}\label{kornpoincare}
		\Vert u\Vert_{W^{1,p}(\Omega)}\le K_p\Vert e(u)\Vert_{L^p(\Omega)}, \quad\text{ for all }u\in W^{1,p}(\Omega;\R^n)\text{ with } u=0 \text{ on }\partial_D\Omega.
	\end{equation}
\end{prop}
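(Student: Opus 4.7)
The plan is to combine Korn's second inequality with a compactness-contradiction argument that exploits the Dirichlet trace condition on $\partial_D\Omega$ to absorb the lower order term. The starting point is the classical estimate
\begin{equation}\label{plan:kornII}
    \Vert\nabla u\Vert_{L^p(\Omega)}\le C\bigl(\Vert e(u)\Vert_{L^p(\Omega)}+\Vert u\Vert_{L^p(\Omega)}\bigr),\qquad\text{for all }u\in W^{1,p}(\Omega;\R^n),
\end{equation}
valid on every bounded Lipschitz domain $\Omega$ and every $p\in(1,+\infty)$, which I would take from the references quoted in the statement. In view of \eqref{plan:kornII}, proving \eqref{kornpoincare} reduces to showing that there exists $M_p>0$ such that $\Vert u\Vert_{L^p(\Omega)}\le M_p\Vert e(u)\Vert_{L^p(\Omega)}$ for every $u$ in the closed subspace $X:=\{u\in W^{1,p}(\Omega;\R^n):u=0\text{ on }\partial_D\Omega\}$.

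Suppose by contradiction that no such $M_p$ exists. Then I extract a sequence $u_k\in X$ with $\Vert u_k\Vert_{W^{1,p}(\Omega)}=1$ and $\Vert e(u_k)\Vert_{L^p(\Omega)}\to 0$. By \eqref{plan:kornII} the sequence is bounded in $W^{1,p}$; reflexivity and the Rellich-Kondrachov theorem then yield, up to a subsequence, $u_k\rightharpoonup u$ weakly in $W^{1,p}$ and $u_k\to u$ strongly in $L^p$. Since $v\mapsto e(v)$ is continuous on $W^{1,p}$, weak convergence together with $\Vert e(u_k)\Vert_{L^p}\to 0$ imply $e(u)=0$; hence $u$ is an infinitesimal rigid motion $u(x)=Ax+b$ with $A\in\R^{n\times n}$ skew-symmetric and $b\in\R^n$. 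Continuity of the trace operator on $W^{1,p}$ forces $u=0$ on $\partial_D\Omega$ in the sense of traces.

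The geometric observation that closes the argument is that a nonzero infinitesimal rigid motion cannot vanish on a set of positive $(n-1)$-dimensional Hausdorff measure: if $A=0$ the trace condition immediately forces $b=0$, while if $A\neq 0$ then skew-symmetry gives $\text{rank}(A)\ge 2$, so the affine zero locus $\{x:Ax+b=0\}$ is either empty or has dimension at most $n-2$, hence is $\mc H^{n-1}$-negligible. This contradicts $\mc H^{n-1}(\partial_D\Omega)>0$, forcing $u\equiv 0$; then $u_k\to 0$ strongly in $L^p$, and reinserting into \eqref{plan:kornII} yields $\Vert u_k\Vert_{W^{1,p}}\to 0$, in contradiction with the normalization. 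The main obstacle is \eqref{plan:kornII} itself, which is a deep nonlocal result typically proved through Ne\v{c}as's lemma and Calder\'on-Zygmund singular integrals; the compactness-plus-rigidity part is, by comparison, routine.
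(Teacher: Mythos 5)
The paper does not prove this proposition; it is stated as a preliminary and attributed to the references \cite{KondratevOle, Pompe}, so there is no in-text argument for you to compare against. Your proof is a correct and standard one. You reduce the claim to Korn's second inequality together with a Poincar\'e-type bound on the subspace $X$, and obtain the latter by the usual compactness--contradiction scheme: normalize $\Vert u_k\Vert_{W^{1,p}}=1$ with $\Vert e(u_k)\Vert_{L^p}\to 0$, pass to a weak $W^{1,p}$-limit (strong in $L^p$ by Rellich--Kondrachov), identify the limit as an infinitesimal rigid motion $u(x)=Ax+b$ on the connected set $\Omega$, and exclude it via the trace condition on $\partial_D\Omega$. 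The rigidity step is handled correctly: a nonzero real skew-symmetric $A$ has even rank, hence rank at least $2$, so the affine zero set $\{x:Ax+b=0\}$ has dimension at most $n-2$ and is $\mc H^{n-1}$-null, which is incompatible with the trace vanishing on a portion of the boundary of positive $\mc H^{n-1}$-measure; the case $A=0$ trivially forces $b=0$. Reinserting $u_k\to 0$ in $L^p$ into Korn's second inequality then contradicts the normalization. The one ingredient you take as a black box, Korn's second inequality on bounded Lipschitz domains for all $p\in(1,+\infty)$, is exactly the deep part and is what the cited references supply; everything you add on top is sound.
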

	
	\section{Setting and main result}\label{sec:setting}
	We consider a composite material made of two adjacent elastic layers, whose reference configuration is represented by a set $\Omega\subseteq \R^n$, with $n\in\N$ (we recall that the physical dimension is $n=2$), which we assume to satisfy the following property:
	\begin{equation}\label{Omega}
		\text{$\Omega$ is bilipschitz diffeomorphic to the open unit cube in $\R^n$.}
	\end{equation}
	This request is needed for a technical reason, namely the regularity result stated in Theorem~\ref{thmregularity}. In particular, we observe that \eqref{Omega} implies
	\begin{equation*}
		\text{$\Omega$ is open, bounded, simply connected, with Lipschitz boundary.}
	\end{equation*}
	\subsection{Elastic energy}
	Both layers of the material are assumed to be linearly elastic, so that their behaviour can be described by the two displacements $u_i$, for $i=1,2$. Since in the considered model the laminate will stretch only in the horizontal components, due to the effects of the horizontal loading (see Section~\ref{subsec:loading}), we may assume that $u_i$ actually represent the in-plane displacements, and so they are valued in $\R^n$ instead of $\R^{n+1}$. In particular, compenetration of the two laminates is automatically avoided and no incompenetration conditions are needed. Denoting with the bold letter $\bm u$ the pair $(u_1,u_2)$, we thus introduce the total bulk elastic energy $\mc E\colon H^1(\Omega;\R^n)^2\to [0,+\infty)$ given by
	\begin{equation}\label{elasticenergy}
		\mc E(\bm{u}):=\sum_{i=1}^{2}\frac 12 \int_{\Omega} \C_i(x) e(u_i(x)):e(u_i(x))\d x,
	\end{equation}
	where $e(u)$ is the symmetric gradient (strain tensor) and $\C_i\colon \Omega\to\R^{n\times n\times n\times n}$ is the fourth order elastic (or stiffness) tensor of the $i$th layer. For $i=1,2$ we assume that
	\begin{enumerate}[label=\textup{(C\arabic*)}, start=1]
		\item \label{hyp:C1} $\C_i$ is uniformly continuous with modulus of continuity $\omega_i$,
	\end{enumerate}
	together with the usual assumptions in linearized elasticity
	\begin{enumerate}[label=\textup{(C\arabic*)}, start=2]	
		\item \label{hyp:C2} $\C_i(x)A\in \R^{n\times n}_{\rm sym}$ for all $x\in\Omega$ and $A\in \R^{n\times n}$;
		\item \label{hyp:C3} $\C_i(x)A=\C_i(x)A_{\rm sym}$ for all $x\in\Omega$ and $A\in \R^{n\times n}$;
		\item \label{hyp:C4} $\C_i(x)A:B=\C_i(x)B:A$ for all $x\in\Omega$ and $A,B\in \R^{n\times n}$ (symmetry);
		\item \label{hyp:C5} $\C_i(x)A:A\ge c_i \vert A_{\rm sym}\vert^2$ for some $c_i>0$ and for all $x\in\Omega$ and $A\in \R^{n\times n}$ (coercivity).
	\end{enumerate}
	We notice that the coercivity condition \ref{hyp:C5} automatically implies the so-called strict Legendre-Hadamard condition
	\begin{equation}\label{LegendreHadamard}
			\C_i(x)(a\otimes b):(a\otimes b)\ge \frac{c_i}{2} \vert a\otimes b\vert^2,\qquad \text{ for all $x\in\Omega$ and $a,b\in \R^{n}$}.
	\end{equation}
	Indeed, \eqref{LegendreHadamard} follows from \ref{hyp:C5} by means of the easy equality
	\begin{equation*}
		2\vert a\odot b\vert^2=\vert a\otimes b\vert^2+(a\cdot b)^2.
	\end{equation*}
	
	For more insights on the Legendre-Hadamard condition we quote \cite[end of Chapter 5]{Ciarlet} and references therein.
	\begin{rmk}
		The homogeneous isotropic case
		 \begin{equation}\label{isotropic}
			\C_i(x)A:=\lambda_i(\tr A )I+2\mu_i A_{\rm sym},
		\end{equation}
	with the Lamé constants satisfying $\mu_i>0$ and $n\lambda_i+2\mu_i>0$, fulfils the previous assumptions \ref{hyp:C1}-\ref{hyp:C5}. The first four conditions are a direct consequence of the explicit form \eqref{isotropic}; to check the validity of \ref{hyp:C5} we notice that
	\begin{equation*}
		\C_i(x)A:A= \lambda_i (\tr A)^2+2\mu_i |A_{\rm sym}|^2.
	\end{equation*} 
	If $\lambda_i\ge 0$ we conclude by choosing $c_i=2\mu_i$, otherwise by using the inequality $(\tr A)^2\le n|A_{\rm sym}|^2$ we get $\C_i(x)A:A\ge (n\lambda_i+2\mu_i)|A_{\rm sym}|^2$, and so one can take $c_i=n\lambda_i+2\mu_i$.
	\end{rmk}

	\subsection{Cohesive interfacial energy}\label{subsec:cohesive}
	
	The behaviour of the interface between the two layers is assumed to follow a cohesive law with respect to their reciprocal slip. We allow for different loading and unloading regimes, which can be modelled by means of the energy $\mc K\colon L^0(\Omega)^+\times L^0(\Omega)^+\to [0,+\infty)$ defined by
	\begin{equation}\label{cohesiveenergy}
		\mc K(\delta,\gamma):=\int_{\Omega}\Phi(\delta(x),\gamma(x))\d x,
	\end{equation}
	for a suitable cohesive energy density $\Phi$ described below.
	
	If $t\mapsto \bm{u}(t)$ represents the evolution of the two displacements, the first variable $\delta=\delta(t)$ in \eqref{cohesiveenergy} plays the role of the size of the actual slip between the two layers, namely $\delta(t)=|u_1(t)-u_2(t)|$, while the second one $\gamma=\gamma(t)$, which takes into account irreversible effects in the interface, describes the \lq\lq maximal" amount of slip reached during the evolution till a certain time $t$ (see \eqref{historyslip} for the rigorous definition).
	
	During the (dissipative) loading phase, namely when $\delta(t)=\gamma(t)$, the cohesive behaviour is described by the concavity property of the function $z\mapsto\Phi(z,z)$, coherently with Barenblatt's theory \cite{Barenblatt}. On the other hand, in the unloading regime $\delta(t)<\gamma(t)$ the overall behaviour is elastic and thus $y\mapsto \Phi(y,\gamma)$ shall be quadratic. 
	
	\begin{figure}
		\includegraphics[scale=.9]{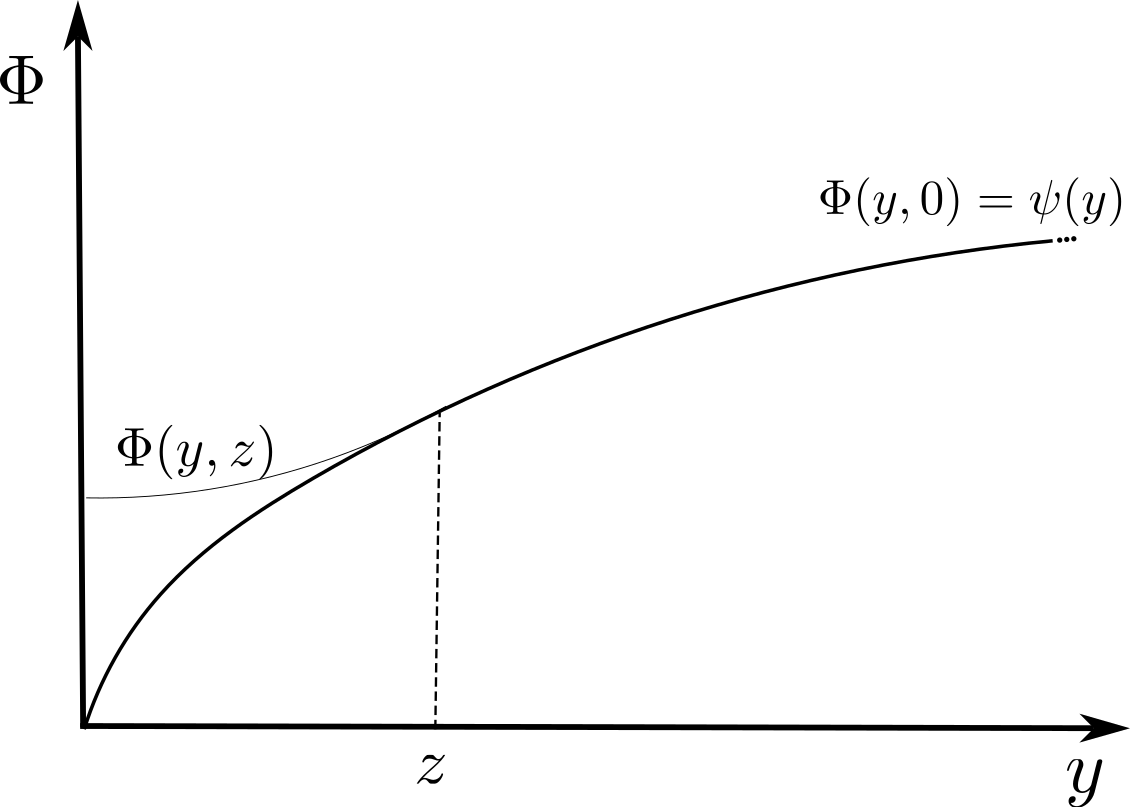}\caption{Graph of the cohesive energy density $\Phi$.}\label{fig:1}
	\end{figure}
		
	In order to incorporate these features (see also \cite{NegSca,NegVit} and Figure~\ref{fig:1}), in this paper we consider a cohesive energy density $\Phi\colon [0,+\infty)^2\to [0,+\infty)$ of the form
	\begin{equation}\label{Phi}
		\Phi(y,z):=\begin{cases}\displaystyle
			\frac{\psi'(z)}{2z}y^2+\psi(z)-\frac{z\psi'(z)}{2}, &\text{if }y<z,\\
			\psi(y), &\text{otherwise},
		\end{cases}
	\end{equation}
	where the function $\psi\colon [0,+\infty)\to [0,+\infty)$, which governs the loading regime, is assumed to be strictly increasing, bounded, concave, of class $C^2$ and such that $\psi'(0)>0=\psi(0)$ and $\psi''(z)\ge -\lambda$ for all $z\ge 0$ and for some $\lambda>0$. This last condition is equivalent to the so-called $\lambda$-convexity, namely 
	\begin{equation}\label{lambdaconv}
		\psi(\theta z^a+(1-\theta)z^b)\le \theta\psi(z^a)+(1-\theta)\psi(z^b)+\frac\lambda 2\theta(1-\theta)|z^a-z^b|^2,\quad\text{for all }\theta\in [0,1],\,z^a,z^b\in [0,+\infty).
	\end{equation}	
	The simplest example of function $\psi$ fulfilling the previous assumptions is given by the negative exponential
	\begin{equation*}
		\psi(z)=\kappa(1-e^{-\rho z}), \qquad\text{for $\kappa, \rho>0$}.	
	\end{equation*}	
\begin{rmk}
	The analysis contained in the present paper can be extended, with minor changes, to the case of a function $\psi$ which is definitively constant, modelling the occurence of complete delamination in the interface. For instance, we can also consider
	\begin{equation*}
		\psi(z)=\begin{cases}\displaystyle
					\kappa\frac z\delta\left(\frac {z^2}{\delta^2}-3\frac z\delta+3\right),&\text{if }z\in[0,\delta],\\
					\kappa,&\text{if }z>\delta,
				\end{cases} \qquad\text{for $\kappa, \delta>0$}.	
\end{equation*}	
We refer to \cite[Lemma~2.4 and equation (2.12)]{BonCavFredRiva} for more details.
\end{rmk}

	\begin{rmk}
		In the one-dimensional case studied in \cite{BonCavFredRiva}, the cohesive energy density is represented by a function $\varphi$ defined on the set $\{z\ge y\ge 0\}$, which slightly differs (see (2.11) therein) from the function $\Phi$ here considered and defined in \eqref{Phi}. Actually, the two formulations are completely equivalent, indeed one can easily check that $\Phi(y,z)=\varphi(y,z\vee y)$ for all $(y,z)\in [0,+\infty)^2$. However, working with $\Phi$ instead of $\varphi$ makes several computations lighter; this fact motivates our choice.
	\end{rmk}

	The following proposition collects the main properties of the density $\Phi$.
	\begin{prop}\label{propPhi}
		The function $\Phi$ defined in \eqref{Phi} fulfils:
		\begin{itemize}
			\item[(i)] $\Phi$ is nonnegative, bounded and continuous on the whole $[0,+\infty)^2$;
			\item[(ii)] for all $z\ge 0$ the function $\Phi(\cdot,z)$ is nondecreasing, Lipschitz and of class $C^1$ in $[0,+\infty)$. Moreover there holds $0\le \partial_y\Phi(y,z)\le \partial_y\Phi(z,z)= \psi'(z)\le \psi'(0)$ for all $y,z\ge 0$ and, if $z>0$, we also have $\partial_y\Phi(0,z)=0$. Furthermore $\partial_y\Phi$ is continuous in $[0,+\infty)^2\setminus\{(0,0)\}$;
			\item[(iii)] for all $y\ge 0$ the function $\Phi(y,\cdot)$ is nondecreasing and of class $C^1$ in $[0,+\infty)$. Moreover it is strictly increasing in $[y,+\infty)$. Furthermore $\partial_z\Phi$ is continuous and positive on the set $\{z>y\ge 0\}$;
			\item[(iv)] for all $z\ge 0$ the function $\Phi(\cdot,z)$ is $\lambda$-convex.
			\item[(v)] 	$\Phi(y,z)=\Phi(y,z\vee y)$ for all $(y,z)\in [0,+\infty)^2$.
		\end{itemize}
	\end{prop}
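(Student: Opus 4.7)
The plan is to verify each item essentially by a case analysis driven by the piecewise definition \eqref{Phi} and the assumptions on $\psi$. The single preliminary observation that will be used throughout is that the two branches agree along the diagonal: evaluating the upper branch at $y=z$ gives $\tfrac{\psi'(z)}{2z}z^2+\psi(z)-\tfrac{z\psi'(z)}{2}=\psi(z)$, which coincides with the lower branch. This immediately yields both continuity across $\{y=z\}$ and property (v), since for $y\ge z$ the lower branch reads $\Phi(y,z)=\psi(y)=\Phi(y,y)=\Phi(y,y\vee z)$.

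For (i), global continuity on $(0,+\infty)^2\setminus\{y=z\}$ is clear, continuity across the diagonal follows from the computation above, and continuity at the origin is obtained by the elementary bound $\frac{\psi'(z)}{2z}y^2\le \frac{\psi'(0)}{2}z$ valid for $y<z$, so that every term in the upper branch tends to $0$. Boundedness is immediate from $\psi$ bounded and the fact that the upper branch is dominated by $\psi(z)$ (since $\tfrac{\psi'(z)}{2z}y^2\le \tfrac{z\psi'(z)}{2}$ when $y<z$). Nonnegativity is the main non-trivial piece: the lower branch is $\psi(y)\ge 0$; for the upper one I use the tangent line inequality for the concave function $\psi$ at the point $z$, evaluated at $0$, i.e. $0=\psi(0)\le \psi(z)-z\psi'(z)$, which rewrites as $\psi(z)-\tfrac{z\psi'(z)}{2}\ge \tfrac{z\psi'(z)}{2}\ge 0$, and adding $\tfrac{\psi'(z)}{2z}y^2\ge 0$ concludes.

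For (ii) and (iv), I differentiate each branch: on $\{y<z\}$ one gets $\partial_y\Phi=\tfrac{\psi'(z)}{z}y$, which at $y=z$ matches the value $\psi'(z)$ coming from $\partial_y\psi(y)$ on $\{y>z\}$; this gives the $C^1$ character, and the bound $0\le \partial_y\Phi\le \psi'(z)\le\psi'(0)$ follows since $y\mapsto \tfrac{\psi'(z)}{z}y$ is linear increasing on $[0,z]$ and $y\mapsto \psi'(y)$ is nonincreasing on $[z,+\infty)$ by concavity. The values $\partial_y\Phi(0,z)=0$ for $z>0$ and continuity of $\partial_y\Phi$ away from $(0,0)$ are then immediate. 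For (iv), $\Phi(\cdot,z)$ is convex on $[0,z]$ (it is affine-plus-quadratic with nonnegative leading coefficient) and $\lambda$-convex on $[z,+\infty)$ (since $\psi''\ge-\lambda$); together with $C^1$ matching at $y=z$ this yields $\lambda$-convexity on the whole half-line.

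For (iii), I compute on $\{z>y\}$
\begin{equation*}
\partial_z\Phi(y,z)=\frac{z\psi''(z)-\psi'(z)}{2z^2}\,y^2+\psi'(z)-\frac{\psi'(z)+z\psi''(z)}{2}=\frac{z\psi''(z)-\psi'(z)}{2z^2}\bigl(y^2-z^2\bigr),
\end{equation*}
while on $\{z<y\}$ the derivative is $0$. The crucial sign observation is that $\psi$ strictly increasing, concave and $C^2$ forces $\psi'>0$ everywhere (otherwise $\psi'$ would vanish on a half-line by monotonicity of $\psi'$, contradicting strict monotonicity of $\psi$), so that $z\psi''(z)-\psi'(z)<0$; combined with $y^2-z^2<0$ on $\{z>y\}$, this yields $\partial_z\Phi>0$ there. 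The factor $(y^2-z^2)$ also shows $\partial_z\Phi\to 0$ as $z\to y^+$, matching the value $0$ coming from the other side, so $\Phi(y,\cdot)$ is $C^1$ and nondecreasing on $[0,+\infty)$, and strictly increasing precisely on $[y,+\infty)$. The only mildly delicate step I anticipate is the $C^1$ matching in (iii) and the strict positivity of $\psi'$, both of which are handled by the two short observations just made.
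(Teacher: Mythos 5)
Your proof is correct and follows essentially the same branch-by-branch strategy as the paper: differentiate each piece explicitly, check that the two branches and their derivatives match along the diagonal $\{y=z\}$, and read off the sign and monotonicity properties from the assumptions on $\psi$. The one place where you argue a little differently is the nonnegativity in (i): the paper shows that $z\mapsto\Phi(0,z)=\psi(z)-\tfrac{z\psi'(z)}{2}$ is nondecreasing and anchors at $\Phi(0,0)=\psi(0)=0$, whereas you apply the tangent-line inequality for the concave $\psi$ at $z$ evaluated at $0$; both work and lead to the same conclusion, so this is a cosmetic variant rather than a different route.
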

	 \begin{proof}
	 	The continuity of $\Phi$ easily follows from the explicit form \eqref{Phi} recalling that $\psi'$ is continuous. Observing that $\Phi(\cdot,z)$ is nondecreasing for fixed $z\ge 0$ (it consists of a parabola followed by the nondecreasing function $\psi$) one has
	 	\begin{equation*}
	 		\Phi(0,z)\le\Phi(y,z)\le \lim\limits_{s\to +\infty}\Phi(s,z)=\sup\limits_{s\ge 0} \psi(s),
	 	\end{equation*}
 	and so $\Phi$ is bounded. Since $\psi$ is nondecreasing, concave and smooth, it is straightforward to check that the function $\Phi(0,z)=\psi(z)-\frac{z\psi'(z)}{2}$ is nondecreasing, from which one deduces
 	\begin{equation*}
 		\Phi(y,z)\ge\Phi(0,z)\ge\Phi(0,0)=\psi(0)=0.
 	\end{equation*}
 Hence $\Phi$ is nonnegative and $(i)$ is proved.
  
 We now focus on $(ii)$. If $z=0$, the statement is true since $\Phi(y,0)=\psi(y)$. If $z>0$, one has
 \begin{equation*}
 	\partial_y\Phi(y,z)=\begin{cases}
 		\frac{\psi'(z)}{z}y,&\text{if }y<z,\\
 		\psi'(y),&\text{if }y\ge z,
 	\end{cases}
 \end{equation*}
and the validity of $(ii)$ can be inferred from the above explicit formula.

To check $(iii)$ it is enough to notice that there holds
\begin{equation*}
	\partial_z \Phi(y,z)=\begin{cases}
		0,&\text{if }z<y,\\
		\frac{\psi'(z)-z\psi''(z)}{2}\left(1-\frac{y^2}{z^2}\right),&\text{if }z\ge y.
	\end{cases}
\end{equation*}
Indeed, from the assumptions on $\psi$, we can deduce that $\partial_z \Phi(y,\cdot)$ is nonnegative and continuous in the whole $[0,+\infty)$ and positive in $(y,+\infty)$. Analogously, one can prove that $\partial_z\Phi$ is continuous and positive if $z>y$.

Property $(iv)$ is an immediate consequence of the $\lambda$-convexity of $\psi$. Indeed, $\Phi(\cdot,z)$ is composed by a convex function (a parabola) in $[0,z]$ and by $\psi$ in $[z,+\infty)$.

Finally, property $(v)$ follows from the very definition \eqref{Phi}. 
	 \end{proof}
	\subsection{External loading and initial conditions}\label{subsec:loading}
	The evolution of the system is driven by an external horizontal loading $w$ acting on a portion of the boundary $\partial_D\Omega\subseteq\partial\Omega$ with positive Hausdorff measure, i.e. $\mc H^{n-1}(\partial_D\Omega)>0$.  We restrict ourselves to \lq\lq slow\rq\rq loadings, so that inertial effects may be neglected and the resulting evolution turns out to be quasistatic.
	
	As usual in the mathematical treatment of mechanical models, the external loading is assumed to be the trace of a function defined on the whole of $\Omega$. In this paper we require 
	\begin{equation}\label{externalloading}
		w\in AC([0,T]; H^1(\Omega;\R^n)),
	\end{equation}
	where $T>0$ is an arbitrary time horizon. For the sake of brevity, given a function $f\colon\partial_D\Omega\to \R^n$, we introduce the following notation:
	\begin{equation*}
		H^1_{D,f}:=\{v\in H^1(\Omega;\R^n):\, v=f\quad\mc H^{n-1}\text{-a.e. in }\partial_D\Omega\}.
	\end{equation*}

	At the initial time $t=0$ the configuration of the body is described by the initial displacement
	\begin{subequations}\label{initial}
			\begin{equation}\label{initialdisplacement}
			\bm u^0\in (H^1_{D,w(0)})^2.
		\end{equation}
		For technical reasons (see Proposition~\ref{unifgamma}) we will also need to require
		\begin{equation}\label{Lipinitial}
			\bm u^0\in C^{0,1}_{\rm loc}(\Omega;\R^n)^2.
		\end{equation}
	\end{subequations}

\subsection{Energetic solutions}
	The total energy of the system is thus described by the functional $\mc F\colon [0,T]\times H^1(\Omega;\R^n)^2\times L^0(\Omega)^+ \to [0,+\infty]$ given by
	\begin{equation}\label{totalenergy}
		\mc F(t,\bm u, \gamma):=\begin{cases}
			\mc E(\bm u)+\mc K(|u_1-u_2|,\gamma),&\text{if }\bm u\in (H^1_{D,w(t)})^2,\\
			+\infty,&\text{otherwise.}
		\end{cases}
	\end{equation}
	
In order to ensure some convexity of $\mc F$ (see Lemma~\ref{lemma:convexity}), we will require that
\begin{equation}\label{eq:hyplambda}
	\lambda<\frac{c_1\wedge c_2}{2 K_2^2},
\end{equation}
where $\lambda$ is the constant appearing in \eqref{lambdaconv}, $c_1,c_2$ are given by \ref{hyp:C5}, while $K_2$ is the Korn's constant from Proposition~\ref{prop:korn} for $p=2$.

Before presenting the definition of solution for the model under consideration we introduce the following notation. Given an arbitrary family $\{f_j\}_{j\in J}\subseteq L^0(\Omega)^+$, the essential (or lattice) supremum 
\begin{equation*}
	f=\essup_{j\in J}f_j,
\end{equation*}
of the family is defined as the unique function in $L^0(\Omega)^+$ satisfying the two properties:
\begin{itemize}
	\item for every $j\in J$ one has $f\ge f_j $ a.e. in $\Omega$;
	\item if $g\in L^0(\Omega)^+$ and for every $j\in J$ there holds $g\ge f_j $ a.e. in $\Omega$, then $g\ge f$ a.e. in $\Omega$. 
\end{itemize}
It is well-known that the essential supremum $f$ always exists; moreover, see for instance \cite[Lemma~2.6.1]{MeyNie}, such $f$ can be computed as a pointwise supremum over a countable subset $J^\N$ of $J$, namely
\begin{equation}\label{eq:pointsup}
	f(x)=\sup\limits_{j\in J^\N}f_j(x),\quad\text{for a.e. }x\in\Omega.
\end{equation}
\begin{defi}\label{def:enev}
	Given an external loading $w$ and an initial condition $\bm u^0$ satisfing \eqref{externalloading} and \eqref{initialdisplacement}, we say that a function $\bm u\in B([0,T]; H^1(\Omega;\R^n)^2)$ is an \emph{energetic solution} of the cohesive interface model if the initial condition $\bm u(0)=\bm u^0$ is attained and if the following global stability condition and energy balance are satisfied for all $t\in [0,T]$:
	\begin{enumerate}[label=\textup{(GS)}]
		\item \label{GS} $\mc F(t,\bm u(t),\delta_h(t))\le \mc F(t,\bm v,\delta_h(t)),\quad \text{ for every }\bm v\in H^1(\Omega;\R^n)^2;$
	\end{enumerate}
	\begin{enumerate}[label=\textup{(EB)}]
		\item \label{EB} $\displaystyle \mc F(t,\bm u(t),\delta_h(t))=\mc F(0,\bm u^0,|u_1^0-u_2^0|)+\mc W(t);$
	\end{enumerate}
where $\delta_h(t)=\delta_h[\bm u](t)$ is the history slip defined by 
\begin{equation}\label{historyslip}
	\delta_h(t):= \essup_{s\in [0,t]}|u_1(s)-u_2(s)|,
\end{equation}
while $\mc W(t)$ represents the work of the external forces and has the form
\begin{equation}\label{work}
	\mc W(t)=\int_{0}^{t}\int_{\Omega}\sum_{i=1}^2\C_i e(u_i(s)):e(\dot{w}(s))\d x\d s.
\end{equation}
\end{defi}
\begin{rmk}
	The choice of working with energetic solutions is motivated by the convexity of the energy $\mc F(t,\cdot,\gamma)$, see Lemma~\ref{lemma:convexity}. We refer to the monograph \cite{MielkRoubbook} for an exhaustive survey on the various notions of solution in quasistatic regimes.
\end{rmk}

	By condition \ref{GS}, for the existence of an energetic solution it is necessary that the initial datum $\bm u^0$ fulfils
\begin{equation}\label{Mininitial}
	\mc E(\bm u^0)+\mc K(|u^0_1-u^0_2|,|u^0_1-u^0_2|)\le \mc E(\bm v)+\mc K(|v_1-v_2|,|u^0_1-u^0_2|),\quad \text{ for every }\bm v\in (H^1_{D,w(0)})^2.
\end{equation}
Notice that it is not clear whether a function $\bm u^0$ satisfying \eqref{Mininitial} exists in general, neither whether \eqref{Mininitial} is compatible with \eqref{Lipinitial}. However, if $w(0)=0$ (i.e. the external loading is initially null, which is a reasonable assumption in view of mechanical applications) the choice $\bm u^0=\bm 0$ complies with both conditions.

Our main result, regarding existence and certain regularity properties of energetic solutions, is stated in the following theorem, whose proof will be the content of Sections~\ref{sec:regularizedenergy} and \ref{sec:proof}.
\begin{thm}\label{mainthm}
	Assume \eqref{Omega}, assume \ref{hyp:C1}-\ref{hyp:C5}, and let the cohesive energy density $\Phi$ be of the form \eqref{Phi}, where the function $\psi$ is as in Section~\ref{subsec:cohesive} and \eqref{eq:hyplambda} is in force. Then, given an external loading $w$ and an initial condition $\bm u^0$ satisfing \eqref{externalloading}, \eqref{initial} and \eqref{Mininitial}, there exists an energetic solution $\bm u$ of the cohesive interface model in the sense of Definition~\ref{def:enev}.
	
	Moreover, such function $\bm u$ actually belongs to $AC([0,T]; H^1(\Omega;\R^n)^2)\cap B([0,T];C^{0,\alpha}(\overline{\Omega'};\R^n)^2)$ for all $\Omega'\subset\subset \Omega$ and $\alpha\in (0,1)$, and so there also holds $\bm u\in C^0([0,T]\times\Omega)$. In particular the history slip $\delta_h$, defined in \eqref{historyslip}, can be computed as a pointwise supremum and belongs to $C^0([0,T]\times\Omega)$ as well.
\end{thm}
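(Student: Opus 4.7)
The plan is to implement the Minimizing Movements scheme announced in the introduction. Fix a uniform partition $t_k^\tau:=k\tau$, $\tau:=T/N$, set $\bm u_0^\tau:=\bm u^0$, and recursively define
\begin{equation*}
\bm u_k^\tau\in\argmin_{\bm v\in (H^1_{D,w(t_k^\tau)})^2}\mc F(t_k^\tau,\bm v,\gamma_{k-1}^\tau\vee|v_1-v_2|),\qquad \gamma_k^\tau:=\gamma_{k-1}^\tau\vee|u_{1,k}^\tau-u_{2,k}^\tau|.
\end{equation*}
Existence of such a minimizer comes from the direct method once strict convexity of $\mc F(t,\cdot,\gamma)$ is secured; this is precisely the role of the smallness assumption \eqref{eq:hyplambda}. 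Property (v) of Proposition~\ref{propPhi} guarantees that $\gamma_k^\tau$ coincides a posteriori with $\max_{j\le k}|u_{1,j}^\tau-u_{2,j}^\tau|$, so it genuinely plays the role of an irreversible memory. Comparing the minimizer with the shifted competitor $\bm u_{k-1}^\tau+(w(t_k^\tau)-w(t_{k-1}^\tau),0)$ and exploiting $w\in AC([0,T];H^1)$ yields, via a discrete Gronwall argument, uniform $H^1$-bounds for $\{\bm u_k^\tau\}$ and a discrete energy inequality.

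The core of the argument is to upgrade these $H^1$-bounds to uniform H\"older estimates independent of $\tau$. Because of the kink of $\Phi$ at the origin, I would first replace $\Phi$ by a smooth approximation $\Phi_\eps$ (constructed in Section~\ref{sec:regularizedenergy}), run the same scheme, obtain a regularized minimizer $\bm u_{k,\eps}^\tau$, and write its classical Euler--Lagrange equations in the form \eqref{intrEL}. The right-hand sides are bounded by $\psi'(0)$ uniformly in $\eps$, $k$ and $\tau$, so Calder\'on--Zygmund $L^p$-regularity for the elliptic systems $-\div(\C_i e(u_i))=f_i$ with bounded right-hand side (whose applicability relies on \eqref{Omega} and on the continuity of $\C_i$ from assumption \ref{hyp:C1}) gives $W^{1,p}_{\rm loc}(\Omega)$-bounds for every $p<\infty$; Sobolev embedding (Theorem~\ref{SobEmb}(c)) then converts them into uniform $C^{0,\alpha}_{\rm loc}(\Omega)$-bounds. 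Sending $\eps\to 0$ at fixed $\tau$ transfers minimality and H\"older estimates to the original minimizer $\bm u_k^\tau$.

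Let $\bar{\bm u}^\tau$ and $\bar\gamma^\tau$ be the right-continuous piecewise constant interpolants. The uniform $H^1$ and $C^{0,\alpha}_{\rm loc}$ bounds, together with monotonicity in $k$ of $\gamma_k^\tau$, enable a Helly-type diagonal selection along vanishing $\tau$: the displacements converge at every $t$ weakly in $H^1$ and strongly in $C^0(\overline{\Omega'})$ for all $\Omega'\subset\subset\Omega$, while the history slips converge pointwise to a nondecreasing-in-$t$ limit $\delta_h$. Condition \ref{GS} is then inherited in the limit by testing the discrete minimality against $\bm v+w(t_k^\tau)-w(t)$ and using lower semicontinuity of $\mc E$ together with strong continuity of the cohesive term $\mc K$; the energy balance \ref{EB} follows from the standard two-sided estimate (lower bound from integration of \ref{GS}, upper bound from passage to the limit in the discrete energy inequality), which also yields $\bm u\in AC([0,T];H^1)$ thanks to the coercivity built into \eqref{eq:hyplambda}.

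The genuinely delicate point, which justifies the whole regularization detour, is to identify the limit $\delta_h(t)$ with $\essup_{s\in[0,t]}|u_1(s)-u_2(s)|$ evaluated along the limit displacement. Indeed the essential supremum is a nonlocal operation on the infinite-dimensional lattice $L^0(\Omega)^+$ and does not a priori commute with weak $H^1$-limits. It is precisely the uniform $C^{0,\alpha}_{\rm loc}$ bound which, via an Arzel\`a--Ascoli argument on compact subsets of $\Omega$, promotes weak to pointwise convergence; combined with the countable characterization \eqref{eq:pointsup}, this closes the identification and thereby makes \ref{GS} and \ref{EB} meaningful for the limit, while simultaneously giving the claimed regularity $\bm u\in B([0,T];C^{0,\alpha}(\overline{\Omega'};\R^n)^2)$.
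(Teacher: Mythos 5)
Your overall strategy is the same as the paper's: regularize $\Phi$ to a smooth $\Phi_\eps$, run Minimizing Movements for $\mc F_\eps$, derive the Euler--Lagrange system, upgrade the energetic $H^1$-bounds to $W^{1,p}_{\rm loc}$ via Calder\'on--Zygmund regularity (Theorem~\ref{thmregularity}) and hence to $C^{0,\alpha}_{\rm loc}$, send $\eps\to 0$ to transfer minimality and bounds back to $\mc F$, then pass $\tau\to 0$ via Banach--Alaoglu for the displacement and a Helly-type lemma for the monotone interpolant, and finally recover \ref{GS} and \ref{EB} by lower semicontinuity and the standard two-sided energy estimate. All of that matches the paper's Sections~\ref{sec:regularizedenergy} and \ref{sec:proof}, and the small misstatements (the direct method needs coercivity and weak lower semicontinuity, not strict convexity — \eqref{eq:hyplambda} is not needed at that step; Gronwall is superfluous for the $H^1$-bound) are harmless.

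There is, however, a genuine gap in your last paragraph. The locally uniform convergence of $\gamma^\tau(t)$ (from the $C^{0,\alpha}_{\rm loc}$ bound and Helly/Arzel\`a--Ascoli) together with the countable characterization \eqref{eq:pointsup} does \emph{not} close the identification of the Helly limit with $\delta_h=\essup_{s\le t}|u_1(s)-u_2(s)|$. What that argument does deliver is only the easy inequality $\gamma(t,x)\ge \sup_{s\le t}|u_1(s,x)-u_2(s,x)|$ (the paper's \eqref{gammasup}), because the $t$-dependent subsequences used to obtain $\bm u(s)$ and those used for $\gamma(t)$ need not match, and the discrete displacements at intermediate times are only controlled up to subsequences depending on the time. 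In other words, $\gamma$ may a priori strictly exceed $\delta_h$: the discrete scheme could store memory of excursions of $\bm u^\tau$ that do not survive in the limit. The opposite inequality $\gamma\le\delta_h$ is the hard part, and the paper handles it in Subsection~\ref{subsec:gammadelta} through an entirely separate chain: uniform convexity of the shifted energy (Lemma~\ref{lemma:convexity}, using \eqref{eq:hyplambda}) to obtain $\bm u\in AC([0,T];H^1)$ and hence continuity of $\gamma,\delta_h$ (Corollary~\ref{cor:regularity}); absolute continuity of $t\mapsto\mc K(\delta(t),\gamma(t))$ coming from the energy balance; strict positivity of $\partial_z\Phi$ on $\{z>y\}$ (Proposition~\ref{propPhi}(iii)) to bound increments of $\gamma$ from below by the increment of the cohesive energy (\eqref{eq:lipbelow}); and the technical Lemma~\ref{lemma:limit}, which shows that on an interval where $\gamma>\delta_h$ the function $\gamma$ cannot increase, yielding a contradiction (Proposition~\ref{propgammadelta}). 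None of this machinery appears in your sketch, so the key conclusion that the limit of the discrete memory equals the history slip of the limit displacement is left unproved.
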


\section{Regularized energy}\label{sec:regularizedenergy}

As mentioned in the Introduction, a first step towards the proof of Theorem~\ref{mainthm} consists in the regularization of the cohesive energy \eqref{cohesiveenergy}, in order to get rid of the kink of $\Phi$ at the origin. This procedure will allow us to compute the Euler-Lagrange equations of the regularized version of the total energy \eqref{totalenergy} (see Proposition~\ref{propeleq}), and to apply elliptic regularity theory, which in turn will be a key ingredient in order to manage the history slip.

To this aim we first approximate the function $\psi$. For $\eps>0$ we define
\begin{equation*}
	\psi_\eps(z):=\begin{cases}\displaystyle
		\frac{z^2}{2\eps}, &\text{if }z\in[0,z_\eps],\\\displaystyle
		\psi(z)-\psi(z_\eps)+\frac{z_\eps^2}{2\eps},&\text{if }z>z_\eps,
	\end{cases}
\end{equation*}
where $z_\eps$ is the unique fixed point of the map $s\mapsto \eps\psi'(s)$. Notice that $\psi_\eps$ has been constructed in such a way that
\begin{equation}\label{apprder}
	\psi'_\eps(z)=\frac{z}{\eps}\wedge \psi'(z),\quad\text{for }z\ge 0,
\end{equation}
see also Figure~\ref{fig:2}. The same regularization has been used in \cite[Section 4]{NegVit}, with different scopes. 

\begin{figure}
	\subfloat{\includegraphics[scale=.78]{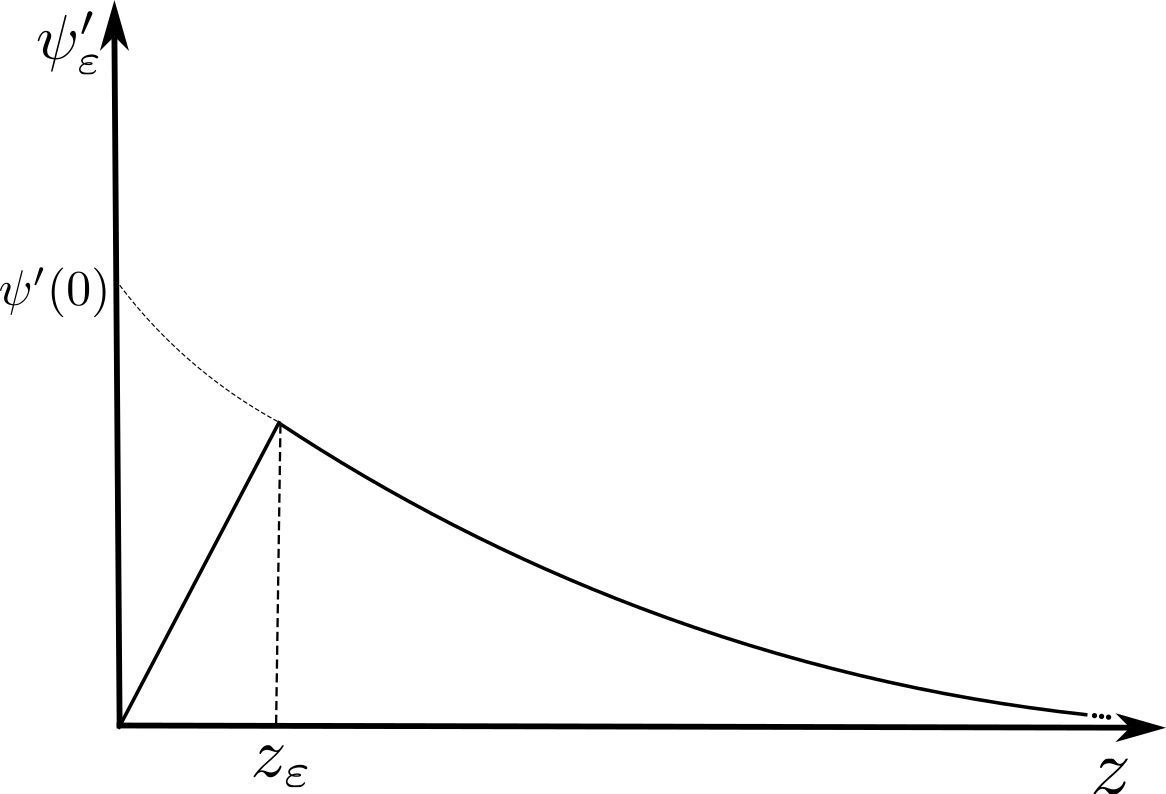}}\quad\subfloat{\includegraphics[scale=.78]{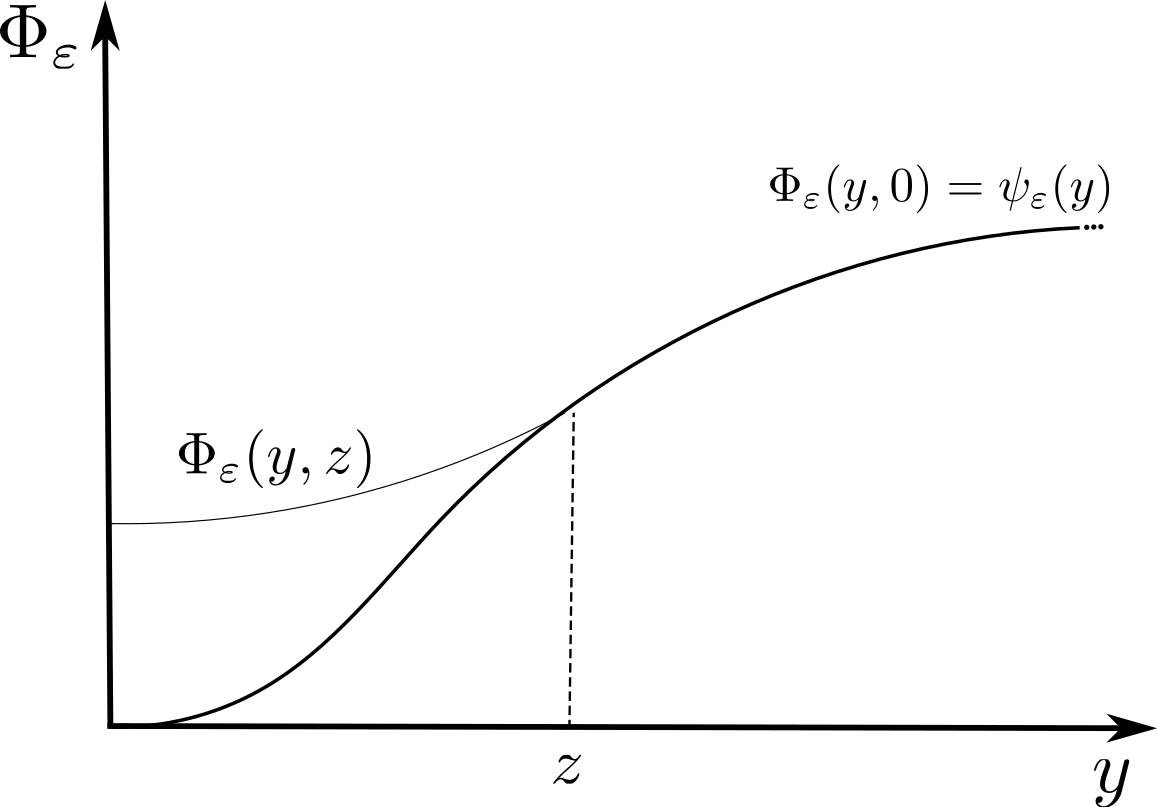}}\caption{Graph of $\psi'_\eps$ and of the regularized cohesive density $\Phi_\eps$.}\label{fig:2}
\end{figure}

By simple computations one can prove that $\psi_\eps$ is nondecreasing, bounded and Lipschitz (uniformly with respect to $\eps$), of class $C^1$ on $[0,+\infty)$ and satisfies $\psi_\eps'(0)=0=\psi_\eps(0)$. Furthermore, observe that $\lim\limits_{\eps\to 0}z_\eps=0$ and $\lim\limits_{\eps\to 0}z_\eps/\eps=\psi'(0)$, whence also $z_\eps^2/\eps$ vanishes as $\eps\to 0$. From this facts, one can easily show that
\begin{equation}\label{unifconv}
	\psi_\eps\xrightarrow[\eps\to 0]{}\psi,\quad\text{uniformly in }[0,+\infty).
\end{equation}

 The regularized cohesive density $\Phi_\eps$ is then defined by \eqref{Phi} replacing $\psi$ with $\psi_\eps$, and analogously we obtain the regularized cohesive energy $\mc K_\eps$ and the regularized total energy $\mc F_\eps$.

The following proposition collects the main properties of such approximation $\Phi_\eps$.
\begin{prop}\label{propphieps}
	The function $\Phi_\eps$ satisfies:
	\begin{itemize}
		\item[(j)] $\Phi_\eps$ is nonnegative, continuous on the whole $[0,+\infty)^2$, and bounded uniformly in $\eps$;
		\item[(jj)] for all $z\ge 0$ the function $\Phi_\eps(\cdot,z)$ is nondecreasing, Lipschitz and of class $C^1$ in $[0,+\infty)$. Moreover there holds $0\le \partial_y\Phi_\eps(y,z)\le \psi'(0)$ for all $y,z\ge 0$ and $\partial_y\Phi_\eps(0,z)=0$;
		\item[(jjj)] $\Phi_\eps\xrightarrow[\eps\to 0]{}\Phi$ uniformly in $[0,+\infty)^2$.
	\end{itemize}
\end{prop}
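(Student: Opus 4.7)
The plan is to derive (j) and (jj) essentially as specializations of Proposition~\ref{propPhi}, applied with $\psi_\eps$ in place of $\psi$, paying attention to make every constant independent of $\eps$. The key pointwise bound that drives uniformity is $\psi_\eps'(z)=(z/\eps)\wedge \psi'(z)\le \psi'(z)\le \psi'(0)$, where the last inequality uses the concavity of $\psi$. For (j), nonnegativity and continuity of $\Phi_\eps$ are immediate from the explicit formula (exactly as in Proposition~\ref{propPhi}(i)), while uniform-in-$\eps$ boundedness will follow from $\|\psi_\eps\|_\infty\le \|\psi\|_\infty+z_\eps^2/(2\eps)$, which is uniformly bounded since $z_\eps^2/\eps\to 0$. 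For (jj), I would write out explicitly
$$\partial_y \Phi_\eps(y,z)=\begin{cases}\psi_\eps'(z)\,y/z,& 0\le y<z,\\ \psi_\eps'(y),& y\ge z,\end{cases}$$
(with $z=0$ falling into the lower branch); continuity of the two pieces at $y=z$ together with the bounds $\psi_\eps'\le\psi'(0)$ and $\psi_\eps'(0)=0$ then yields all the claimed properties, and in particular the vanishing of $\partial_y \Phi_\eps(0,z)$ for every $z\ge 0$ (including $z=0$, which is the improvement over the unregularized density).

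The substantive content is in (jjj), where I need uniform convergence on the whole of $[0,+\infty)^2$. On the region $y\ge z$ one trivially has $\Phi_\eps(y,z)-\Phi(y,z)=\psi_\eps(y)-\psi(y)$, which tends to $0$ uniformly by \eqref{unifconv}. On the region $0\le y<z$ I would recast the difference as
$$\Phi_\eps(y,z)-\Phi(y,z)=\bigl(\psi_\eps(z)-\psi(z)\bigr)-\frac{z^2-y^2}{2z}\bigl(\psi_\eps'(z)-\psi'(z)\bigr),$$
so that the potentially singular prefactor $1/(2z)$ gets multiplied by $z^2-y^2$, which is bounded by $z^2$. The first summand is handled by \eqref{unifconv}. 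For the second, the identity \eqref{apprder} gives $\psi_\eps'(z)=\psi'(z)$ whenever $z\ge z_\eps$, while $|\psi_\eps'(z)-\psi'(z)|\le \psi'(0)$ always; together with $(z^2-y^2)/(2z)\le z/2$ in the considered region, this bounds the summand in absolute value by $(z_\eps/2)\psi'(0)$, which vanishes as $\eps\to 0$. Combining the two regions yields the desired uniform convergence.

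The main obstacle is precisely the singular factor $1/(2z)$ appearing in the definition of $\Phi$ near the origin: a naive term-by-term estimate of $\Phi_\eps-\Phi$ would blow up as $z\to 0^+$. The rearrangement above is specifically designed so that the difference is expressed only in terms of the ``good'' quantities $\psi_\eps(z)-\psi(z)$ and $z(\psi_\eps'(z)-\psi'(z))$, both of which are uniformly small by \eqref{unifconv} and by the support properties of $\psi_\eps'-\psi'$. Once this identity is set down, the remainder of the proof is routine.
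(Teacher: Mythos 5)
Your argument is correct, and for $(j)$ and $(jj)$ it coincides with what the paper does (the paper just says these parts are proved as in Proposition~\ref{propPhi}). For $(jjj)$ you take a slightly different computational route: the paper applies the fundamental theorem of calculus in the $y$--variable,
\[
|\Phi_\eps(y,z)-\Phi(y,z)|\le |\Phi_\eps(0,z)-\Phi(0,z)|+\int_{0}^{y}|\partial_y\Phi_\eps(s,z)-\partial_y\Phi(s,z)|\,\mathrm{d}s,
\]
and then proves a pointwise bound of the form $|\partial_y\Phi_\eps(s,z)-\partial_y\Phi(s,z)|\le \psi'(0)\chi_{[0,z_\eps]}(s\vee z)$ by a case analysis, while you instead expand $\Phi_\eps-\Phi$ algebraically on the region $y<z$ as $(\psi_\eps(z)-\psi(z))-\tfrac{z^2-y^2}{2z}(\psi_\eps'(z)-\psi'(z))$ and bound the factor $\tfrac{z^2-y^2}{2z}\le \tfrac z2$. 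Both arguments rest on exactly the same two ingredients, namely the uniform convergence $\psi_\eps\to\psi$ from \eqref{unifconv} and the fact that $\psi_\eps'-\psi'$ is supported in $[0,z_\eps]$ and bounded by $\psi'(0)$; your rearrangement has the minor advantage of never invoking $\partial_y\Phi_\eps$ for $(jjj)$, yielding a marginally more direct estimate, while the paper's integral route reuses the formula for $\partial_y\Phi_\eps$ already written for $(jj)$. Either presentation is fine.
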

\begin{proof}
	The proof of $(j)$ and $(jj)$ is similar to the one of Proposition~\ref{propPhi}, so we do not write the details. 
	
	To prove $(jjj)$ we fix $(y,z)\in [0,+\infty)^2$ and we estimate
	\begin{equation}\label{eq1}
		|\Phi_\eps(y,z)-\Phi(y,z)|\le\int_{0}^{y}|\partial_y\Phi_\eps(s,z)-\partial_y\Phi(s,z)|\d s+|\Phi_\eps(0,z)-\Phi(0,z)|.
	\end{equation}
	By observing that by the expression \eqref{apprder} for any $x\ge 0$ there holds
	\begin{equation*}
		|\psi'_\eps(x)-\psi'(x)|\le \psi'(0)\chi_{[0,z_\eps]}(x),
	\end{equation*}
we first deduce that
\begin{equation}\label{eq2}
	\begin{aligned}
	|\Phi_\eps(0,z)-\Phi(0,z)|&=\left|\psi_\eps(z)-\frac{z\psi'_\eps(z)}{2}-\psi(z)+\frac{z\psi'(z)}{2}\right|\le |\psi_\eps(z)-\psi(z)|+\frac{z}{2}|\psi'_\eps(z)-\psi'(z)|\\
	&\le \sup\limits_{s\ge 0}|\psi_\eps(s)-\psi(s)|+\frac{\psi'(0)}{2}z\chi_{[0,z_\eps]}(z)\le \sup\limits_{s\ge 0}|\psi_\eps(s)-\psi(s)|+\frac{\psi'(0)}{2}z_\eps.
\end{aligned}
\end{equation}

We now claim that for all $s,z\ge 0$ there holds
\begin{equation}\label{claim}
	|\partial_y\Phi_\eps(s,z)-\partial_y\Phi(s,z)|\le \psi'(0)\chi_{[0,z_\eps]}(s\vee z).
\end{equation}
If the claim is true we obtain
\begin{equation}\label{eq3}
	\int_{0}^{y}|\partial_y\Phi_\eps(s,z)-\partial_y\Phi(s,z)|\d s\le \psi'(0)	\int_{0}^{y} \chi_{[0,z_\eps]}(s\vee z)\d s\le 2\psi'(0)z_\eps,
\end{equation}
and we conclude by combining \eqref{eq1}, \eqref{eq2}, \eqref{eq3} and recalling \eqref{unifconv}.

We are only left to show \eqref{claim}: if $z=0$ we have
\begin{equation*}
	|\partial_y\Phi_\eps(s,0)-\partial_y\Phi(s,0)|=|\psi'_\eps(s)-\psi'(s)|\le \psi'(0)\chi_{[0,z_\eps]}(s).
\end{equation*}
If $z>0$ we first consider the case $s<z$: so we have
\begin{equation*}
	|\partial_y\Phi_\eps(s,z)-\partial_y\Phi(s,z)|=\frac sz|\psi'_\eps(z)-\psi'(z)|\le \psi'(0)\chi_{[0,z_\eps]}(z).
\end{equation*}
If instead $s\ge z$ there holds
\begin{equation*}
	|\partial_y\Phi_\eps(s,z)-\partial_y\Phi(s,z)|=|\psi'_\eps(s)-\psi'(s)|\le \psi'(0)\chi_{[0,z_\eps]}(s).
\end{equation*}
Combining the three above cases we conclude.
\end{proof}

\subsection{Time-discretization scheme}\label{subsec:minmov}

	We now employ the classical Minimizing Movements argument with the regularized energy $\mc F_\eps$. Let $\tau>0$ be a small parameter such that $T/\tau\in \N$, and for $k=0,\dots, T/\tau$ let $t^k:=k\tau$, so that the family $\{t^k\}_{k=0,\dots, T/\tau}$ forms an equidistant partition of $[0,T]$. For $k=1,\dots,T/\tau$, we consider the following recursive algorithm: given the previous pair $(\bm u^{k-1}_\eps,\gamma^{k-1}_\eps)$, we set
	\begin{equation}\label{discralg}
		\begin{cases}\displaystyle
			\bm u^k_\eps\in\argmin\limits_{\bm{v}\in H^1(\Omega;\R^n)^2}\mc F_\eps(t^k,\bm{v}, \gamma^{k-1}_\eps),\\
			\gamma^k_\eps:=\gamma^{k-1}_\eps\vee|(u_1)^k_\eps-(u_2)^k_\eps|,			
		\end{cases}
	\end{equation}
where the initial conditions are given by
\begin{equation*}
	\bm u^0_\eps:=\bm u^0,\quad \gamma^0_\eps:=|u^0_1-u^0_2|.
\end{equation*}

	We observe that the minimization is well posed since $\mc F_\eps (t^k,\cdot, \gamma^{k-1}_\eps)$ is coercive and lower semicontinuous in the weak topology of $H^1(\Omega;\R^n)^2$: coercivity follows by \ref{hyp:C5} together with Korn-Poincar\'e inequality \eqref{kornpoincare}, while semicontinuity is standard for $\mc E$ and is a consequence of Fatou's Lemma for $\mc K_\eps(\cdot,\gamma^{k-1}_\eps)$.

	We introduce the following notation: given a vector $v\in \R^n$ we denote by $\dir v\in \R^n$ its direction, namely 
	\begin{equation*}
		\dir v:=\begin{cases}
			\frac{v}{|v|},&\text{if }v\neq 0,\\
			0,&\text{if }v= 0.
		\end{cases}
	\end{equation*} 
	\begin{prop}\label{propeleq}
		For all $k=1,\dots,T/\tau$ the function $\bm u^k_\eps\in (H^1_{D,w(t^k)})^2$ is a weak solution of the system
		\begin{equation}\label{eleq}
			\begin{cases}
				-\div(\mathbb{C}_1 e(u_1))=-\partial_y\Phi_\eps(|u_1-u_2|,\gamma^{k-1}_\eps)\dir(u_1-u_2),&\text{in }\Omega,\\
				-\div(\mathbb{C}_2 e(u_2))=\partial_y\Phi_\eps(|u_1-u_2|,\gamma^{k-1}_\eps)\dir(u_1-u_2),&\text{in }\Omega,\\
				\mathbb{C}_1 e(u_1)n_\Omega=\mathbb{C}_2 e(u_2)n_\Omega=0,&\text{in }\partial\Omega\setminus\partial_D\Omega,
			\end{cases}
		\end{equation}
	where $n_\Omega$ denotes the outward unit normal to the set $\Omega$.
	\end{prop}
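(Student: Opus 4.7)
The plan is to obtain \eqref{eleq} as the Euler--Lagrange system associated with the minimization \eqref{discralg}, via the classical first-variation argument; the crucial step is justifying that differentiation under the integral is legitimate despite the potential non-smoothness of the map $p\mapsto |p|$ at the origin, which is exactly what the regularization $\Phi_\eps$ is designed to cure.

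First I would observe that, since the minimum value in \eqref{discralg} is finite (it is attained, as already noted from coercivity and lower semicontinuity), the minimizer $\bm u^k_\eps$ necessarily belongs to $(H^1_{D,w(t^k)})^2$. I would then choose as admissible perturbations pairs $\bm\varphi=(\varphi_1,\varphi_2)\in (H^1_{D,0})^2$, so that $\bm u^k_\eps+s\bm\varphi\in (H^1_{D,w(t^k)})^2$ for every $s\in\R$, and impose the stationarity condition
\begin{equation*}
\left.\frac{d}{ds}\right|_{s=0}\mc F_\eps(t^k,\bm u^k_\eps+s\bm\varphi,\gamma^{k-1}_\eps)=0.
\end{equation*}
The first variation of the elastic term $\mc E$ is the standard quadratic one and yields $\sum_{i=1}^2\int_\Omega \C_i e(u^k_{\eps,i}):e(\varphi_i)\,\d x$.

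The delicate point concerns the cohesive contribution $\mc K_\eps(|u_1-u_2|,\gamma^{k-1}_\eps)$. For fixed $z\geq 0$, the function $p\mapsto \Phi_\eps(|p|,z)$ is of class $C^1(\R^n)$: for $p\neq 0$ its gradient equals $\partial_y\Phi_\eps(|p|,z)\dir(p)$, while the identity $\partial_y\Phi_\eps(0,z)=0$ from $(jj)$ of Proposition~\ref{propphieps} guarantees Fréchet differentiability at $p=0$ with vanishing gradient, as well as continuity of the gradient across the origin. Thanks to the uniform bound $0\le \partial_y\Phi_\eps(y,z)\le \psi'(0)$, dominated convergence allows the differentiation to pass under the integral sign in $\mc K_\eps$, producing the cohesive part of the first variation
\begin{equation*}
\int_\Omega \partial_y\Phi_\eps(|u^k_{\eps,1}-u^k_{\eps,2}|,\gamma^{k-1}_\eps)\,\dir(u^k_{\eps,1}-u^k_{\eps,2})\cdot(\varphi_1-\varphi_2)\,\d x.
\end{equation*}

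Imposing that the sum of the two variations vanishes for every $\bm\varphi\in (H^1_{D,0})^2$ gives exactly the weak formulation of \eqref{eleq}: the arbitrariness of $\bm\varphi$ with compact support in $\Omega$ yields the two bulk equations, while the freedom on the trace on $\partial\Omega\setminus\partial_D\Omega$ encodes the natural Neumann condition $\C_i e(u_i)n_\Omega=0$ there. The main obstacle is precisely the one neutralized by the regularization: without the property $\partial_y\Phi_\eps(0,z)=0$, the composition $\Phi_\eps(|u_1-u_2|,z)$ would fail to be differentiable along the set $\{u_1=u_2\}$ and the stationarity condition would only take a variational-inequality form.
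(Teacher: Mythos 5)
Your proposal is correct and follows essentially the same route as the paper: take admissible variations $\bm u^k_\eps + s\bm\varphi$ with $\bm\varphi\in (H^1_{D,0})^2$, compute the first variation, and use the Lipschitz bound $0\le\partial_y\Phi_\eps\le\psi'(0)$ together with $\partial_y\Phi_\eps(0,z)=0$ to justify differentiating under the integral and to identify the cohesive contribution with $\partial_y\Phi_\eps(|u_1-u_2|,\gamma^{k-1}_\eps)\dir(u_1-u_2)$. Your slightly more explicit verification that $p\mapsto\Phi_\eps(|p|,z)$ is $C^1(\R^n)$ is a welcome elaboration of the same point the paper invokes tersely through Proposition~\ref{propphieps}$(jj)$.
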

\begin{proof}
	It is enough to show that system \eqref{eleq} is the Euler-Lagrange equation of $\mc F_\eps$. By using standard variations $\bm u^k_\eps+h\bm\varphi$, with $\bm\varphi\in (H^1_{D,0})^2$, from the minimality of $\bm u^k_\eps$ one deduces
	\begin{align*}
		0\le&\quad\,\sum_{i=1}^2\int_{\Omega}\mathbb C_ie((u_i)^k_\eps):e(\varphi_i)\d x\\
		&+\lim\limits_{h\to 0^+}\int_{\Omega}\frac{\Phi_\eps(|(u_1)^k_\eps-(u_2)^k_\eps+h(\varphi_1-\varphi_2)|,\gamma^{k-1}_\eps)-\Phi_\eps(|(u_1)^k_\eps-(u_2)^k_\eps|,\gamma^{k-1}_\eps)}{h}\d x.
	\end{align*}
	Since $\Phi_\eps(\cdot,z)$ is Lipschitz and smooth (see $(jj)$ in Proposition~\ref{propphieps}) we can move the limit inside the integral, and by taking also $-\bm\varphi$ as a test function we finally get
	\begin{align*}
		0&=\sum_{i=1}^2\int_{\Omega}\mathbb C_ie((u_i)^k_\eps):e(\varphi_i)\d x+\int_{\Omega}\partial_y\Phi_\eps(|(u_1)^k_\eps{-}(u_2)^k_\eps|,\gamma^{k-1}_\eps)\dir((u_1)^k_\eps{-}(u_2)^k_\eps)\cdot(\varphi_1-\varphi_2)\d x,
	\end{align*}
and so we conclude.
\end{proof}

\subsection{Uniform bounds}

In this section we deduce uniform estimates for the pairs $(\bm u^k_\eps, \gamma^k_\eps)$ previously obtained. The bounds in the Sobolev space $H^1(\Omega;\R^n)$ stated in Proposition~\ref{prop:boundH1} directly follow from energetic arguments, while the ones in H\"older spaces stated in Proposition~\ref{unifgamma} are less standard and are a consequence of elliptic regularity.

\begin{prop}\label{prop:boundH1}
	Assume \eqref{externalloading} and \eqref{initialdisplacement}. Then there exists a constant $C>0$ independent of $\eps$ and $\tau$ such that
	\begin{equation}\label{boundH1}
		\max\limits_{k=0,\dots,T/\tau}\Vert \bm u^k_\eps\Vert_{H^{1}(\Omega)^2}\le C.
	\end{equation}
\end{prop}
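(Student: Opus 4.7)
The strategy is a discrete Gronwall argument fed by minimality against a translated competitor. For each step $k\ge 1$, set $W^k:=w(t^k)-w(t^{k-1})$ and introduce the competitor $\bm v^k:=\bm u^{k-1}_\eps+W^k$ (adding $W^k$ to both components). Since $(u_i)^{k-1}_\eps\in H^1_{D,w(t^{k-1})}$, the pair $\bm v^k$ belongs to $(H^1_{D,w(t^k)})^2$ and is therefore admissible in the minimization \eqref{discralg} at step $k$. The decisive feature is the identity $v^k_1-v^k_2=(u_1)^{k-1}_\eps-(u_2)^{k-1}_\eps$: the cohesive contribution of the competitor only depends on the previous slip. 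Testing the minimality $\mc F_\eps(t^k,\bm u^k_\eps,\gamma^{k-1}_\eps)\le \mc F_\eps(t^k,\bm v^k,\gamma^{k-1}_\eps)$ thus reduces to
\begin{equation*}
\mc E(\bm u^k_\eps)+\mc K_\eps(|(u_1)^k_\eps-(u_2)^k_\eps|,\gamma^{k-1}_\eps)\le\mc E(\bm v^k)+\mc K_\eps(|(u_1)^{k-1}_\eps-(u_2)^{k-1}_\eps|,\gamma^{k-1}_\eps).
\end{equation*}

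Property (v) of Proposition~\ref{propPhi} holds verbatim for $\Phi_\eps$, since the latter is defined by the same formula with $\psi$ replaced by $\psi_\eps$. Combined with the update rule $\gamma^k_\eps=\gamma^{k-1}_\eps\vee|(u_1)^k_\eps-(u_2)^k_\eps|$, this yields the telescoping identity $\mc K_\eps(|(u_1)^k_\eps-(u_2)^k_\eps|,\gamma^{k-1}_\eps)=\mc K_\eps(|(u_1)^k_\eps-(u_2)^k_\eps|,\gamma^k_\eps)$. Setting $F_k:=\mc E(\bm u^k_\eps)+\mc K_\eps(|(u_1)^k_\eps-(u_2)^k_\eps|,\gamma^k_\eps)$ and expanding $\mc E(\bm v^k)$ around $\bm u^{k-1}_\eps$, using Cauchy-Schwarz together with \ref{hyp:C5} and the boundedness of $\C_i$, we arrive at
\begin{equation*}
F_k\le F_{k-1}+CF_{k-1}^{1/2}\Vert W^k\Vert_{H^1(\Omega)}+C\Vert W^k\Vert^2_{H^1(\Omega)},
\end{equation*}
with $C$ independent of $\eps,\tau,k$.

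A straightforward induction (the elementary statement being: if $a_k\le a_{k-1}+Ca_{k-1}^{1/2}b_k+Cb_k^2$ with $a_k,b_k\ge 0$, then $a_k^{1/2}\le a_0^{1/2}+C'\sum_{j=1}^k b_j$) then gives $F_k^{1/2}\le F_0^{1/2}+C'\Vert\dot w\Vert_{L^1([0,T];H^1(\Omega))}$, since $\sum_j\Vert W^j\Vert_{H^1(\Omega)}\le\Vert\dot w\Vert_{L^1([0,T];H^1(\Omega))}$ by absolute continuity. The initial value $F_0$ is bounded uniformly in $\eps$ by Proposition~\ref{propphieps}(j) (and the fact that $\bm u^0\in(H^1_{D,w(0)})^2$ is a fixed datum). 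Hence $\mc E(\bm u^k_\eps)\le F_k\le C$ uniformly in $k,\eps,\tau$. Finally, writing $(u_i)^k_\eps=((u_i)^k_\eps-w(t^k))+w(t^k)$, the first summand vanishes on $\partial_D\Omega$, so Korn-Poincar\'e \eqref{kornpoincare} together with \ref{hyp:C5} gives $\Vert (u_i)^k_\eps-w(t^k)\Vert_{H^1(\Omega)}\le C(\mc E(\bm u^k_\eps)^{1/2}+\Vert w(t^k)\Vert_{H^1(\Omega)})$, and since $w\in AC([0,T];H^1(\Omega;\R^n))$ is uniformly bounded in $H^1$, \eqref{boundH1} follows. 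The only genuinely delicate step is the telescoping identity: without it, the $\mc K_\eps$ contribution would accumulate linearly in $k\sim T/\tau$, ruining any uniform bound.
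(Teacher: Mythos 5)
Your proof is correct, but it follows a genuinely different and substantially more involved route than the paper's. The paper tests minimality against the single competitor $\bm w(t^k)=(w(t^k),w(t^k))$, whose slip is identically zero, so the cohesive term on the right-hand side is just $\int_\Omega\Phi_\eps(0,\gamma^{k-1}_\eps)\d x$; since $\Phi_\eps$ is nonnegative and bounded uniformly in $\eps$ (Proposition~\ref{propphieps}\,(j)), the cohesive energy can be dropped on the left and bounded by a $k$-independent constant on the right, giving $\mc E(\bm u^k_\eps)\le C(\max_t\Vert w(t)\Vert^2_{H^1}+1)$ directly, with no recursion. Your approach instead uses the translated competitor $\bm u^{k-1}_\eps+W^k$, exploits the slip-preservation identity $v^k_1-v^k_2=(u_1)^{k-1}_\eps-(u_2)^{k-1}_\eps$ together with property~(v) of $\Phi_\eps$ to telescope the cohesive terms, and closes a discrete Gronwall inequality for $F_k$. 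Both arguments are sound, and the steps you carry out (admissibility of $\bm v^k$, validity of (v) for $\Phi_\eps$, the telescoping, the elementary inductive lemma, and the final Korn--Poincar\'e step) are all correct. The trade-off: the paper's argument is shorter precisely because $\Phi_\eps$ is uniformly bounded, which makes any accumulation of cohesive contributions a non-issue; your Gronwall-type estimate is more work here, but it is more robust and would survive if the cohesive density were allowed to grow unboundedly, since it never needs an absolute bound on $\mc K_\eps$ --- only cancellation between consecutive steps. In fact, your estimate is closer in spirit to the discrete energy \emph{inequality} used later in Proposition~\ref{propdiscrineq}, where exactly this translated competitor and the telescoping via property~(v) appear.
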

\begin{proof}
	We fix $k=1,\dots T/\tau$ and in \eqref{discralg}$_1$ we use the function $\bm w(t^k):=(w(t^k),w(t^k))$ as a competitor. By exploiting \ref{hyp:C1}, \ref{hyp:C5} together with the nonnegativity and boundedness (uniformly in $\eps$) of $\Phi_\eps$ we obtain
	\begin{align*}
		\sum_{i=1}^2 \frac{c_i}{2}\Vert e((u_i)^k_\eps)\Vert^2_{L^2(\Omega)}&\le \mc F_\eps(t^k,\bm u^k_\eps,\gamma^{k-1}_\eps)\le \mc F_\eps(t^k,\bm w(t^k),\gamma^{k-1}_\eps)\\
		&\le C\Vert w(t^k)\Vert^2_{H^{1}(\Omega)}+\int_{\Omega}\Phi_\eps(0,\gamma^{k-1}_\eps)\d x\\
		&\le C(\max\limits_{t\in[0,T]}\Vert w(t)\Vert^2_{H^{1}(\Omega)}+1).
	\end{align*}
By means of Korn-Poincar\'e inequality \eqref{kornpoincare} and exploiting \eqref{externalloading} we now deduce
\begin{equation*}
	\max\limits_{k=1,\dots, T/\tau}\Vert \bm u^k_\eps\Vert_{H^{1}(\Omega)^2}\le C,
\end{equation*}
and so we conclude since for $k=0$ one has $\bm u^0_\eps=\bm u^0$.
\end{proof}

We will take advantage of the following regularity result. We refer to \cite[Theorem~7.2]{GiacMart} for its proof.
\begin{thm}\label{thmregularity}
	Let $\Omega\subseteq \R^n$ satisfy \eqref{Omega}. Let $u\in H^1(\Omega;\R^n)$ be a weak solution of the equation
	\begin{equation*}
		-\div(\mathbb{C}e(u))=g,\qquad\text{in }\Omega,
	\end{equation*}
where the tensor $\mathbb{C}$ satisfies \ref{hyp:C1}, \ref{hyp:C3} and \eqref{LegendreHadamard}.

If $g\in L^{\frac{np}{n+p}}(\Omega;\R^n)$ for some $p>2$, then $\nabla u\in L^p_{\rm loc}(\Omega;\R^{n\times n})$ and for all open set $\Omega'\subset\subset\Omega$ there holds
\begin{equation*}
	\Vert\nabla u\Vert_{L^p(\Omega')}\le C\Big(\Vert g\Vert_{L^{\frac{np}{n+p}}(\Omega)}+\Vert\nabla u\Vert_{L^2(\Omega)}\Big),
\end{equation*}
where the constant $C>0$ depends only on $p,n,c,\omega$ and $\mathrm{dist}(\Omega',\Omega)$.
\end{thm}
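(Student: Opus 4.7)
My plan is to follow the classical Calder\'on--Zygmund scheme for second-order elliptic systems in divergence form: localize, freeze the coefficients, invoke $L^p$-theory for the resulting constant-coefficient model, and control the freezing error by a perturbation/absorption argument. As a preliminary reduction, by duality of the Sobolev embedding (Theorem~\ref{SobEmb}) one has $L^{np/(n+p)}(\Omega;\R^n)\hookrightarrow W^{-1,p}(\Omega;\R^n)$, so it is enough to establish the analogous bound with $\Vert g\Vert_{W^{-1,p}}$ on the right-hand side.

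The core step is the constant-coefficient model problem on $\R^n$: for a constant tensor $\mathbb{C}_0$ satisfying \ref{hyp:C3} and \eqref{LegendreHadamard}, the symbol $A(\xi)^{ij}=(\mathbb{C}_0)^{i\alpha j\beta}\xi_\alpha\xi_\beta$ is invertible for $\xi\neq 0$ with $|A(\xi)^{-1}|\le C|\xi|^{-2}$, so the second derivatives of the fundamental solution are a Fourier multiplier with $0$-homogeneous symbol, i.e.\ a Calder\'on--Zygmund singular integral operator, bounded on $L^p(\R^n)$ for every $p\in(1,\infty)$. Given this, I would cover $\overline{\Omega'}$ by finitely many balls $B_r(x_i)$ with $B_{2r}(x_i)\subset\subset\Omega$ and $\omega(2r)\le\eta$ for a threshold $\eta$ to be fixed later. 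On each ball I rewrite
\begin{equation*}
	-\div(\mathbb{C}(x_i)e(u)) = g - \div\bigl((\mathbb{C}(x_i)-\mathbb{C}(\cdot))e(u)\bigr),
\end{equation*}
multiply by a cutoff $\zeta_i\in C^\infty_c(B_{2r}(x_i))$ equal to $1$ on $B_r(x_i)$, and apply the constant-coefficient $L^p$ estimate to the function $\zeta_i u$ extended by zero to $\R^n$. This produces a bound of the form
\begin{equation*}
	\Vert\nabla u\Vert_{L^p(B_r(x_i))} \le C\bigl(\Vert g\Vert_{L^{np/(n+p)}(\Omega)} + \omega(2r)\Vert\nabla u\Vert_{L^p(B_{2r}(x_i))} + \text{l.o.t.}\bigr),
\end{equation*}
where the lower-order terms come from the commutators with $\nabla\zeta_i$ and $\nabla^2\zeta_i$. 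Choosing $\eta$ small enough allows to absorb the middle term via a standard iteration/hole-filling argument on a suitably refined cover, and the lower-order terms are estimated either directly by Sobolev embedding starting from $\Vert\nabla u\Vert_{L^2(\Omega)}$ (in the range $p\le 2^*$) or by a short bootstrap that progressively improves the integrability of $\nabla u$ using the very same scheme.

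The main obstacle lies in the constant-coefficient step: with only the Legendre--Hadamard condition \eqref{LegendreHadamard} (strictly weaker than full Legendre ellipticity), $L^p$-theory for $p\neq 2$ cannot be obtained by energy methods and genuinely requires the Calder\'on--Zygmund analysis of the inverse symbol $A(\xi)^{-1}$. Moreover, the perturbation step relies crucially on the uniform continuity assumption~\ref{hyp:C1} in order to make $\omega(2r)$ arbitrarily small; the hypothesis \eqref{Omega} that $\Omega$ be bilipschitz diffeomorphic to a cube plays essentially no role in the interior estimate itself and is relevant only in how the covering is organized globally.
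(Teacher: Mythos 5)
The paper does not prove this statement; it quotes \cite[Theorem~7.2]{GiacMart} as a black box, so there is no in-text argument to compare with. Your outline is a correct high-level reconstruction of the classical Calder\'on--Zygmund proof that the cited reference carries out: Sobolev duality $L^{np/(n+p)}(\Omega;\R^n)\hookrightarrow W^{-1,p}(\Omega;\R^n)$, $L^p$ multiplier theory for the constant-coefficient model (the Legendre--Hadamard condition alone gives $A(\xi)a\cdot a=\mathbb C_0(a\otimes\xi):(a\otimes\xi)\ge\tfrac c2|a|^2|\xi|^2$, so $A(\xi)$ is coercive and $|A(\xi)^{-1}|\le 2(c|\xi|^2)^{-1}$), localization by cut-offs, and a freezing argument exploiting the modulus of continuity $\omega$ from \ref{hyp:C1}. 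Your side remark that hypothesis \eqref{Omega} is not genuinely needed for an interior estimate is also correct; it is simply the setting in which the cited theorem is formulated.

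One step is circular as you wrote it. You move the freezing error $\div\big((\mathbb C(x_i)-\mathbb C)e(u)\big)$ to the right-hand side and then propose to absorb $\omega(2r)\Vert\nabla u\Vert_{L^p(B_{2r}(x_i))}$ by hole-filling; but both the absorption and the hole-filling presuppose $\nabla u\in L^p_{\rm loc}$, which is the conclusion, and the proposed fallback of ``a short bootstrap using the very same scheme'' stalls for the same reason: if $\nabla u$ is only known in $L^q$ with $q<p$, the freezing error is only $L^q$, so the constant-coefficient $L^s$-estimate cannot even be invoked for $s>q$ and no exponent is gained. The standard repair keeps the variable-coefficient operator on the left: for $v=\zeta u$ one has $-\div(\mathbb C e(v))=\zeta g-\nabla\zeta\cdot\mathbb C e(u)-\div\big(\mathbb C(\nabla\zeta\odot u)\big)$, whose right-hand side lies in $W^{-1,\min(p,q^*)}$ whenever $\nabla u\in L^q_{\rm loc}$ (here the Sobolev gain $q\mapsto q^*$ enters), while the smallness of $\omega(2r)$ makes the localized variable-coefficient operator a Neumann-series perturbation of the constant-coefficient one and hence an isomorphism from $W^{1,s}_0$ onto $W^{-1,s}$ for every $s\in(1,\infty)$; uniqueness in $W^{1,2}_0$ then upgrades $\nabla u$ to $L^{\min(p,q^*)}_{\rm loc}$, and the iteration genuinely terminates at $p$. (Alternatively: prove the inequality as an a priori estimate for smooth solutions and pass to the limit by approximation.) This is a real step, not a remark, and the version you sketched does not close on its own.
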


\begin{prop}\label{unifgamma}
		Assume \eqref{Omega}, \eqref{externalloading} and \eqref{initialdisplacement}. Then for all $k=1,\dots, T/\tau$ there actually holds\begin{equation*}
		\bm u^k_\eps\in W^{1,p}_{\rm loc}(\Omega;\R^n)^2\quad \text{for all }p>2,
	\end{equation*}
	and for all $\Omega'\subset\subset\Omega$ there exists a constant $C>0$ independent of $\eps$ and $\tau$ (possibly depending on $p$ and $\Omega'$) such that
	\begin{equation}\label{boundW1p}
	\max\limits_{k=1,\dots, T/\tau}\Vert \bm u^k_\eps\Vert_{W^{1,p}(\Omega')^2}\le C.
	\end{equation}
Furthermore, assuming \eqref{Lipinitial}, one also has that for all $k=0,\dots, T/\tau$ the function $\gamma^k_\eps$ is locally $\alpha$-H\"older continuous for every $\alpha\in (0,1)$, and for all $\Omega'\subset\subset\Omega$ there exists $C$ as before such that
\begin{equation}\label{boundlipgamma}
	\max\limits_{k=0,\dots, T/\tau}\Vert  \gamma^k_\eps\Vert_{\rm{C^{0,\alpha}}(\overline\Omega')}\le C.
\end{equation}
\end{prop}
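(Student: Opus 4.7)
\medskip

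\noindent\textbf{Proof plan.} The strategy is to bootstrap from the $H^1$-bound of Proposition~\ref{prop:boundH1} to local Hölder bounds by exploiting the Euler–Lagrange system \eqref{eleq} and elliptic regularity, and then to transfer these bounds to the history variable $\gamma^k_\eps$ through a pointwise supremum argument.

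First, I would apply Theorem~\ref{thmregularity} to each equation in \eqref{eleq}. The crucial point is that, by property~$(jj)$ of Proposition~\ref{propphieps}, the source term
\[
g^k_\eps:=\pm\partial_y\Phi_\eps(|(u_1)^k_\eps-(u_2)^k_\eps|,\gamma^{k-1}_\eps)\dir((u_1)^k_\eps-(u_2)^k_\eps)
\]
satisfies $|g^k_\eps|\le\psi'(0)$ pointwise, hence lies in $L^\infty(\Omega;\R^n)\subseteq L^{np/(n+p)}(\Omega;\R^n)$ with a bound uniform in $\eps,\tau,k$. Together with \ref{hyp:C1}, \ref{hyp:C3}, the Legendre–Hadamard inequality \eqref{LegendreHadamard}, and the $H^1$-bound of Proposition~\ref{prop:boundH1}, Theorem~\ref{thmregularity} then yields an estimate $\|\nabla\bm u^k_\eps\|_{L^p(\Omega^*)}\le C$ on any intermediate Lipschitz set $\Omega'\subset\subset\Omega^*\subset\subset\Omega$, uniform in $\eps,\tau,k$. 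I would upgrade this gradient bound to a full $W^{1,p}(\Omega^*)$-bound by applying Poincaré's inequality on $\Omega^*$ combined with the uniform $L^2(\Omega^*)$-control coming from Proposition~\ref{prop:boundH1}; restricting to $\Omega'$ yields \eqref{boundW1p}.

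For the Hölder bound \eqref{boundlipgamma}, given $\alpha\in(0,1)$ I would choose $p>n/(1-\alpha)$ in the previous step and invoke the Sobolev embedding Theorem~\ref{SobEmb}(c) on $\Omega^*$ to deduce that $\bm u^k_\eps$, and hence $|(u_1)^k_\eps-(u_2)^k_\eps|$, belongs to $C^{0,\alpha}(\overline{\Omega'};\R^n)^2$ with a bound uniform in $\eps,\tau,k$. Iterating the update rule \eqref{discralg} gives the pointwise representation
\[
\gamma^k_\eps=|u^0_1-u^0_2|\vee\bigvee_{j=1}^{k}|(u_1)^j_\eps-(u_2)^j_\eps|,
\]
where the initial term is locally $\alpha$-Hölder by \eqref{Lipinitial}. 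The elementary inequality $|\sup_j a_j-\sup_j b_j|\le\sup_j|a_j-b_j|$ implies that the pointwise supremum of any family of functions sharing a common $\alpha$-Hölder seminorm $L$ is itself $\alpha$-Hölder with seminorm at most $L$; this, together with the uniform sup-norm bounds on each member, yields \eqref{boundlipgamma}.

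The main obstacle is precisely ensuring that the Hölder bound does not degenerate as the number of iterates $k$ grows (equivalently, as $\tau\to 0$). This is handled by the $\eps,\tau,k$-independent nature of the source bound $|\partial_y\Phi_\eps|\le\psi'(0)$, which propagates through the elliptic regularity estimate to give seminorm bounds uniform in $k$, and by the fact that pointwise suprema preserve Hölder seminorms regardless of the cardinality of the family being taken.
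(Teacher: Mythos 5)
Your proposal is correct and follows essentially the same route as the paper: Euler--Lagrange system \eqref{eleq}, the $L^\infty$ bound $|g^k_\eps|\le\psi'(0)$ from Proposition~\ref{propphieps}~$(jj)$, the local gradient estimate of Theorem~\ref{thmregularity}, Sobolev embedding to Hölder spaces, and finally transfer of the Hölder bound to $\gamma^k_\eps$ through the pointwise supremum. The only cosmetic deviations are that you control $\|\bm u^k_\eps\|_{L^p(\Omega^*)}$ via a Poincaré--Wirtinger inequality plus the $L^2$-bound rather than the paper's Sobolev-embedding bootstrap, and you pass the Hölder seminorm to the supremum via $|\sup_j a_j-\sup_j b_j|\le\sup_j|a_j-b_j|$ rather than picking a maximizing index $\bar j(x)$; both are equivalent in substance.
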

\begin{proof}
	Let us fix $k=1,\dots, T/\tau$ and $p>2$. By Proposition~\ref{propeleq} we know that $\bm u^k_\eps$ is a weak solution of
	\begin{equation*}
		-\div (\mathbb C_i e(u_i))=g_i,
	\end{equation*}
	where for $i=1,2$ we set $g_i=(-1)^i \partial_y\Phi_\eps(|(u_1)^k_\eps-(u_2)^k_\eps|,\gamma^{k-1}_\eps)\dir((u_1)^k_\eps-(u_2)^k_\eps)$. Observe that by $(jj)$ in Proposition~\ref{propphieps} we have $g_i\in L^\infty(\Omega;\R^n)$ with
	\begin{equation}\label{boundgi}
		\Vert g_i\Vert_{L^\infty(\Omega)}\le \psi'(0).
	\end{equation}
By Theorem~\ref{thmregularity} we hence deduce that $\nabla(u_i)^k_\eps\in L^q_{\rm loc}(\Omega;\R^{n\times n})$ for all $q>2$ with
\begin{equation}\label{boundgrad}
\Vert\nabla (u_i)^k_\eps\Vert_{L^q(\Omega')}\le C\Big(\Vert g_i\Vert_{L^{\frac{nq}{n+q}}(\Omega)}+\Vert\nabla (u_i)^k_\eps\Vert_{L^2(\Omega)}\Big)\le C(\psi'(0)+1),
\end{equation}
for all $\Omega'\subset\subset\Omega$ (the constant $C$ may depend on $\Omega'$). In the last inequality above we used \eqref{boundgi} together with \eqref{boundH1}.

In particular, we can choose $q=p$ in \eqref{boundgrad}: we are thus left to control the $L^p$-norm of $(u_i)^k_\eps$ in order to prove \eqref{boundW1p}. This easily follows by means of a bootstrap argument which combines \eqref{boundH1}, Sobolev Embedding Theorem~\ref{SobEmb}, and \eqref{boundgrad}. 

Let us now prove \eqref{boundlipgamma}, assuming in addition \eqref{Lipinitial}. Fix $k=0,\dots, T/\tau$ and $\alpha\in (0,1)$. If $k=0$ there is nothing to prove, since $\gamma^0_\eps=|u^0_1-u^0_2|\in C^{0,1}_{\rm loc}(\Omega)\subseteq C^{0,\alpha}_{\rm loc}(\Omega)$. If $k\ge 1$, let $p>\frac{n}{1-\alpha}$ so that $1-\frac np>\alpha$. By means of \eqref{boundW1p}, Sobolev Embedding Theorem yields that $\bm u^j_\eps\in C^{0,\alpha}_{\rm loc}(\Omega;\R^n)^2$ for all $j=1,\dots, k$, with
\begin{equation}\label{holdbound}
	\max\limits_{j=1,\dots, k}\Vert \bm u^j_\eps \Vert_{C^{0,\alpha}(\overline\Omega')^2}\le C \max\limits_{j=1,\dots, k}\Vert \bm u^j_\eps\Vert_{W^{1,p}(\Omega')^2}\le C,
\end{equation}
for all $\Omega'\subset\subset\Omega$ (the constant $C$ may depend on $\Omega'$). In particular $\gamma^k_\eps$ is continuous, since it is a maximum among a finite number of  continuous functions, and for a fixed $\Omega'\subset\subset\Omega$ there holds
\begin{align*}
	\Vert\gamma^k_\eps\Vert_{C^{0}(\overline{\Omega}')}=\max\limits_{j=1,\dots, k}\Vert (u_1)^j_\eps-(u_2)^j_\eps\Vert_{C^{0}(\overline{\Omega}')}\vee \Vert u^0_1-u^0_2\Vert_{C^{0}(\overline{\Omega}')}\le C.
\end{align*}
	We finally show that the seminorms $[\gamma^k_\eps]_{\alpha,\overline{\Omega'}}$ are uniformly bounded as well, thus concluding the proof. Let us fix $x,y\in \overline{\Omega'}$, with $x\neq y$; so there exists $\bar j=\bar j(x)\in \{0,\dots k\}$ such that $\gamma^k_\eps(x)=|(u_1)^{\bar{j}}_\eps(x)-(u_2)^{\bar{j}}_\eps(x)|$. By using \eqref{holdbound} we now deduce
	\begin{align*}
		\gamma^k_\eps(x)&\le |(u_1)^{\bar{j}}_\eps(y)-(u_2)^{\bar{j}}_\eps(y)|+\sum_{i=1}^2|(u_i)^{\bar{j}}_\eps(x)-(u_i)^{\bar{j}}_\eps(y)|\\
		&\le \gamma^k_\eps(y)+C|x-y|^\alpha,
	\end{align*}
and we obtain the result by the arbitrariness of $x$ and $y$ in $\overline{\Omega'}$.
\end{proof}

As a corollary, we are able to pass to the limit the discrete functions $\bm u^k_\eps$ and $\gamma^k_\eps$ as $\eps\to 0$, obtaining the existence of minimizing movements for the original functional $\mc F$, which in addition preserve the uniform estimates previously obtained (see \eqref{againbounds}). This fact will be crucial to deal with the irreversible functions $\gamma^k$.
\begin{cor}\label{corconv}
	Assume \eqref{Omega}, \eqref{externalloading} and \eqref{initial}. Then there exists a subsequence $\eps_j\to 0$ and for all $k=0,\dots, T/\tau$ there exist $\bm u^k\in (H^1_{D,w(t^k)}\cap C^{0,\alpha}_{\rm loc}({\Omega};\R^n))^2$ and $\gamma^k\in C^{0,\alpha}_{\rm loc}({\Omega})$ for any $\alpha\in (0,1)$ such that for all $k=0,\dots,T/\tau$ there holds
	\begin{equation}\label{conveps}
	\begin{aligned}
	&	\bm u^k_{\eps_j}\xrightharpoonup[j\to+\infty]{H^1(\Omega;\R^n)^2} \bm u^k,\qquad\text{and}\qquad	\bm u^k_{\eps_j}\xrightarrow[j\to+\infty]{} \bm u^k \text{ locally uniformly in $\Omega$},\\
	& \gamma^k_{\eps_j} \xrightarrow[j\to+\infty]{} \gamma^k \text{ locally uniformly in $\Omega$}.
	\end{aligned}
\end{equation}
Moreover, for $k=1,\dots,T/\tau$ the functions $\bm u^k, \gamma^k$ satisfy
\begin{equation}\label{discralg2}
	\begin{cases}\displaystyle
		\bm u^k\in\argmin\limits_{\bm{v}\in H^1(\Omega;\R^n)^2}\mc F(t^k,\bm{v}, \gamma^{k-1}),\\
		\gamma^k=\gamma^{k-1}\vee|(u_1)^k-(u_2)^k|,\\
		\bm u^0=\bm u^0,\quad \gamma^0=|u^0_1-u^0_2|.
	\end{cases}
\end{equation} 
Furthermore, there exists a constant $C>0$ and for all $\Omega'\subset\subset\Omega$ a constant $C'$, both independent of $\tau$, such that
\begin{equation}\label{againbounds}
\begin{gathered}
	\max\limits_{k=0,\dots,T/\tau}\Vert \bm u^k\Vert_{H^{1}(\Omega)^2}\le C,\\
	\max\limits_{k=0,\dots,T/\tau}(\Vert \bm u^k\Vert_{C^{0,\alpha}(\overline\Omega')^2}+\Vert \gamma^k\Vert_{C^{0,\alpha}(\overline\Omega')})\le C'.
\end{gathered}
\end{equation}
\end{cor}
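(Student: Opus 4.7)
The plan is to argue by finite induction on $k\in\{0,1,\dots,T/\tau\}$, extracting at each stage a further subsequence from the one produced at the previous stage. Since $T/\tau$ is finite, the last extraction yields a single subsequence $\eps_j\to 0$ valid for every $k$. For $k=0$ there is nothing to do: the sequence is stationary in $\eps$, with $\bm u^0_\eps\equiv\bm u^0$ and $\gamma^0_\eps\equiv|u_1^0-u_2^0|$, and the required regularity is supplied by \eqref{Lipinitial}.

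For the inductive step at level $k\ge 1$, I would first invoke \eqref{boundH1} and Banach--Alaoglu to extract a subsequence such that $\bm u^k_{\eps_j}\rightharpoonup\bm u^k$ in $H^1(\Omega;\R^n)^2$; weak continuity of the trace then gives $\bm u^k\in(H^1_{D,w(t^k)})^2$. Combining the Hölder estimates \eqref{holdbound}--\eqref{boundlipgamma} with a diagonal argument over a countable exhaustion of $\Omega$ by open sets compactly contained in $\Omega$, Ascoli--Arzelà provides (along a further subsequence) locally uniform convergences $\bm u^k_{\eps_j}\to\bm u^k$ and $\gamma^{k-1}_{\eps_j}\to\gamma^{k-1}$ in $\Omega$; uniqueness identifies the locally uniform limit of $\bm u^k_{\eps_j}$ with its weak $H^1$ limit. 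Defining $\gamma^k:=\gamma^{k-1}\vee|u_1^k-u_2^k|$, the recursive relation $\gamma^k_\eps=\gamma^{k-1}_\eps\vee|(u_1)^k_\eps-(u_2)^k_\eps|$ passes locally uniformly to the limit, securing the second identity in \eqref{discralg2}.

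To pass the minimality to the limit, I start from the $\eps$-level inequality $\mc F_\eps(t^k,\bm u^k_{\eps_j},\gamma^{k-1}_{\eps_j})\le\mc F_\eps(t^k,\bm v,\gamma^{k-1}_{\eps_j})$, which I may restrict to admissible $\bm v\in(H^1_{D,w(t^k)})^2$ (otherwise both sides are $+\infty$). On the right-hand side, the uniform convergence $\Phi_\eps\to\Phi$ from Proposition~\ref{propphieps}(jjj), the a.e.\ convergence $\gamma^{k-1}_{\eps_j}\to\gamma^{k-1}$, and the uniform $L^\infty$ bound on $\Phi_\eps$ from Proposition~\ref{propphieps}(j) permit Lebesgue's dominated convergence on the bounded set $\Omega$, yielding $\mc F_\eps(t^k,\bm v,\gamma^{k-1}_{\eps_j})\to\mc F(t^k,\bm v,\gamma^{k-1})$. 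On the left-hand side, $\mc E$ is weakly lower semicontinuous, while the cohesive term converges outright by the same dominated convergence argument, now exploiting also the locally uniform convergence of $(u_1)^k_{\eps_j}-(u_2)^k_{\eps_j}$. Combining these yields $\mc F(t^k,\bm u^k,\gamma^{k-1})\le\mc F(t^k,\bm v,\gamma^{k-1})$, i.e.\ the minimality in \eqref{discralg2}.

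The uniform bounds \eqref{againbounds} then transfer to the limit via weak lower semicontinuity of the $H^1$-norm and the elementary fact that Hölder seminorms of fixed exponent are preserved under pointwise convergence. The main technical point I expect is the interplay, in the cohesive term, between a \emph{local} compactness control (the Hölder estimates hold only on $\Omega'\subset\subset\Omega$) and a \emph{global} integration over $\Omega$: making the dominated convergence argument actually go through requires simultaneously (i) the uniform convergence $\Phi_\eps\to\Phi$, (ii) the a.e.\ convergence of both arguments supplied by local uniform convergence, and (iii) the equiboundedness of $\Phi_\eps$ from Proposition~\ref{propphieps}(j); removing any one of these three ingredients would leave a gap in the passage to the limit.
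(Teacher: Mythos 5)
Your plan is correct and follows essentially the same route as the paper: Banach--Alaoglu for the weak $H^1$ limits, Ascoli--Arzelà with the local Hölder bounds \eqref{boundlipgamma}, \eqref{holdbound} for the locally uniform limits, and passage to the limit in the $\eps$-level minimality inequality via weak lower semicontinuity of $\mc E$ combined with dominated convergence for the cohesive term (using Proposition~\ref{propphieps}(j),(jjj) and Proposition~\ref{propPhi}(i) exactly as you indicate). Your closing observation on the interplay of the three dominated-convergence ingredients is accurate and is implicitly the content of the paper's one-line justification; the finite-induction/diagonal extraction over $k$ is the standard detail the paper leaves implicit.
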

\begin{proof}
	The existence of convergent subsequences as in \eqref{conveps} is a consequence of \eqref{boundH1}, \eqref{boundlipgamma} and \eqref{holdbound} via a standard application of Banach-Alaoglu and Ascoli-Arzel\'a theorems. The same uniform bounds also yield \eqref{againbounds}.
	
	The only nontrivial part of \eqref{discralg2} is the minimality property of $\bm u^k$. To prove it, fix $\bm{v}\in (H^1_{D,w(t^k)})^2$, so that by \eqref{discralg}$_1$ we have
	\begin{equation}\label{eqmin}
		\mc E(\bm u^k_{\eps_j})+\mc K_{\eps_j}(|(u_1)^k_{\eps_j}-(u_2)^{k}_{\eps_j}|,\gamma^{k-1}_{\eps_j})\le \mc E(\bm{v})+\mc K_{\eps_j}(|v_1-v_2|,\gamma^{k-1}_{\eps_j}).
	\end{equation}
 By weak lower semicontinuity of $\mc E$ we deduce $\mc E(\bm u^k)\le\liminf\limits_{j\to +\infty} \mc E (\bm u^k_{\eps_j})$. By using $(j)$ and $(jjj)$ in Proposition~\ref{propphieps} and $(i)$ in Proposition~\ref{propPhi}, and exploiting \eqref{conveps}, Dominated Convergence Theorem instead yields
 \begin{align*}
 	&\lim\limits_{j\to +\infty}\mc K_{\eps_j}(|(u_1)^k_{\eps_j}-(u_2)^k_{\eps_j}|,\gamma^{k-1}_{\eps_j})=\mc K(|(u_1)^k-(u_2)^k|,\gamma^{k-1}),\\
 	&\lim\limits_{j\to +\infty}\mc K_{\eps_j}(|v_1-v_2|,\gamma^{k-1}_{\eps_j})=\mc K(|v_1-v_2|,\gamma^{k-1}).
 \end{align*} 
Hence, letting $j\to +\infty$ in \eqref{eqmin} we conclude.
\end{proof}

\section{Proof of Theorem~\ref{mainthm}}\label{sec:proof}
We now consider the piecewise constant interpolants $\bm u^\tau$, $\gamma^\tau$ of the discrete displacements and the discrete history slip found in Corollary~\ref{corconv}, namely
\begin{equation}\label{interpolants}
	\begin{cases}
		\bm u^\tau(t):=\bm u^k,& \gamma^\tau(t):=\gamma^k,\qquad\text{if }t\in[t^k,t^{k+1}),\\
		\bm u^\tau(T):=\bm u^{ T/\tau},& \gamma^\tau(T):=\gamma^{ T/\tau}.
	\end{cases}
\end{equation}
It is also convenient to introduce, in an analogous way, the piecewise constant interpolants $w^\tau$ of the external loading
\begin{equation*}
	\begin{cases}
		 w^\tau(t):=w(t^k),\qquad\text{if }t\in[t^k,t^{k+1}),\\
		w^n(T):=w(T).
	\end{cases}
\end{equation*}
Furthermore, for a given $t\in [0,T]$, we define
\begin{equation*}
	t^\tau:=\max\{t^k:\,t^k\le t\}.
\end{equation*}

\begin{prop}\label{propdiscrineq}
	Assume \eqref{Omega}, \eqref{externalloading} and \eqref{initial}. Then for every $t\in [0,T]$ and for all $\tau>0$ (such that $T/\tau\in\N$) the following discrete energy inequality holds true:
	\begin{equation}\label{eq:discrineq}
		\mc F(t^\tau,\bm u^\tau(t),\gamma^\tau(t))\le \mc F(0,\bm u^0,|u_1^0-u^0_2|)+\mc W^\tau(t)+R^\tau,
	\end{equation}
	where $\displaystyle\mc W^\tau(t)=\int_{0}^t\int_{\Omega}\sum_{i=1}^2\mathbb{C}_ie(u^\tau_i(s)):e(\dot{w}(s))\d x\d s$, and $R^\tau\ge 0$ is a (uniform) remainder satisfying $\lim\limits_{\tau\to 0}R^\tau=0$.
\end{prop}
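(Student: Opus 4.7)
The strategy is a classical minimizing-movements telescoping. For each step $k=1,\dots,T/\tau$, I would test the minimality of $\bm u^k$ in \eqref{discralg2} against the competitor $\bm v^{(k)} := \bm u^{k-1} + \bm w(t^k) - \bm w(t^{k-1})$, with $\bm w(t):=(w(t),w(t))$. Since $\bm u^{k-1}\in (H^1_{D,w(t^{k-1})})^2$, this $\bm v^{(k)}$ lies in $(H^1_{D,w(t^k)})^2$ and is admissible; moreover $v^{(k)}_1 - v^{(k)}_2 = u_1^{k-1}-u_2^{k-1}$, so the slip (and hence the cohesive contribution) is unaffected by the boundary-condition shift.

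Substituting $\bm v^{(k)}$ in the minimality $\mc F(t^k,\bm u^k,\gamma^{k-1})\le \mc F(t^k,\bm v^{(k)},\gamma^{k-1})$ and rewriting the right-hand side as $\mc E(\bm v^{(k)})-\mc E(\bm u^{k-1}) + \mc F(t^{k-1},\bm u^{k-1},\gamma^{k-1})$ produces the one-step estimate
\begin{equation*}
\mc F(t^k,\bm u^k,\gamma^k)\le \mc F(t^{k-1},\bm u^{k-1},\gamma^{k-1}) + \mc E(\bm v^{(k)})-\mc E(\bm u^{k-1}),
\end{equation*}
valid for $k\ge 1$ with the convention $\mc F(0,\bm u^0,\gamma^0)=\mc F(0,\bm u^0,|u_1^0-u_2^0|)$. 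The only nontrivial algebraic step is the identification of the left-hand side $\mc F(t^k,\bm u^k,\gamma^{k-1})$ with $\mc F(t^k,\bm u^k,\gamma^k)$, which follows from property $(v)$ of $\Phi$ in Proposition~\ref{propPhi}, namely $\Phi(y,z)=\Phi(y,y\vee z)$, combined with the update rule $\gamma^k=\gamma^{k-1}\vee|u_1^k-u_2^k|$.

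Expanding $\mc E(\bm v^{(k)})$ by bilinearity gives $\mc E(\bm v^{(k)})-\mc E(\bm u^{k-1}) = L_k+r_k$, with $L_k := \sum_i\int_\Omega\mathbb C_ie(u_i^{k-1}):e(w(t^k)-w(t^{k-1}))\d x$ and $r_k:=\tfrac12\sum_i\int_\Omega\mathbb C_ie(w(t^k)-w(t^{k-1})):e(w(t^k)-w(t^{k-1}))\d x\ge 0$. Since $\bm u^\tau\equiv\bm u^{k-1}$ on $[t^{k-1},t^k)$, the linear term reads $L_k=\int_{t^{k-1}}^{t^k}\int_\Omega\sum_i\mathbb C_ie(u_i^\tau(s)):e(\dot w(s))\d x\d s$. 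Summing from $k=1$ to $m:=t^\tau/\tau$ telescopes the left-hand side and produces the work integral on $[0,t^\tau]$; its discrepancy with $\mc W^\tau(t)$ is $\sum_i\int_\Omega\mathbb C_ie(u_i^m):e(w(t)-w(t^\tau))\d x$, bounded by $C\|w(t)-w(t^\tau)\|_{H^1(\Omega)}$ via Cauchy-Schwarz and the uniform estimate \eqref{againbounds}.

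It remains to verify that $R^\tau := \sum_{k=1}^{T/\tau} r_k + C\sup_{t\in[0,T]}\|w(t)-w(t^\tau)\|_{H^1(\Omega)}$ tends to $0$. The second summand vanishes by uniform continuity of $w$, which lies in $AC([0,T];H^1(\Omega;\R^n))\subset C([0,T];H^1(\Omega;\R^n))$. For the first, setting $I_k:=\int_{t^{k-1}}^{t^k}\|\dot w(s)\|_{H^1}\d s$, boundedness of $\mathbb C_i$ yields $r_k\le CI_k^2$, hence
\begin{equation*}
\sum_{k=1}^{T/\tau} r_k\le C\Big(\max_k I_k\Big)\sum_{k=1}^{T/\tau} I_k\le C\,\|\dot w\|_{L^1([0,T];H^1)}\max_k I_k,
\end{equation*}
and $\max_k I_k\to 0$ since $s\mapsto\int_0^s\|\dot w(r)\|_{H^1}\d r$ is absolutely continuous on $[0,T]$. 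Both bounds are uniform in $t$, yielding \eqref{eq:discrineq}. The principal technical subtlety of the whole argument is precisely the $\gamma$-index shift in the one-step estimate: property $(v)$ of $\Phi$ is what absorbs the irreversible variable into a telescoping sum.
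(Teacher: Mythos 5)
Your proof is correct and takes essentially the same approach as the paper: same competitor $\bm u^{k-1}+\bm w(t^k)-\bm w(t^{k-1})$, same use of Proposition~\ref{propPhi}$(v)$ to shift the $\gamma$-index and enable the telescoping, and an equivalent remainder (your algebraic expansion $L_k+r_k$ yields precisely what the paper obtains by writing the elastic-energy increment as $\int_{t^{k-1}}^{t^k}\frac{\d}{\d s}\mc E(\bm u^{k-1}+\bm w(s)-\bm w(t^{k-1}))\d s$ and splitting off $\mc W^\tau$).
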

\begin{proof}
	Let us fix $k\in\{1,\dots T/\tau\}$ and for $j=1,\dots k$ we use as a competitor for $\bm u^j$ in \eqref{discralg2}$_1$ the function $\bm u^{j-1}+\bm w(t^j)-\bm w(t^{j-1})$, where we set $\bm w(t)=(w(t),w(t))$, obtaining
	\begin{equation*}
		\mc E(\bm u^j)+\mc K(|u_1^j-u_2^j|,\gamma^{j-1})\le \mc E(\bm u^{j-1}+\bm w(t^j)-\bm w(t^{j-1}))+\mc K(|u_1^{j-1}-u_2^{j-1}|,\gamma^{j-1}).
	\end{equation*}
We now observe that by $(v)$ in Proposition~\ref{propPhi} one has $\mc K(|u_1^j-u_2^j|,\gamma^{j-1})=\mc K(|u_1^j-u_2^j|,\gamma^{j})$. By plugging this fact in the above inequality, after subtracting $\mc E(\bm u^{j-1})$ on both sides and summing from $j=1$ to $k$ we deduce
\begin{align*}
	&\mc E(\bm u^k)+\mc K(|u_1^k-u_2^k|,\gamma^{k})-\mc E(\bm u^0)-\mc K(|u_1^0-u_2^0|,|u_1^0-u_2^0|)\\
	\le&\sum_{j=1}^k
\int_{t^{j-1}}^{t^j}\frac{\d}{\d s}\mc E(\bm u^{j-1}+\bm w(s)-\bm w(t^{j-1}))\d s.
\end{align*}
By computing the above time derivative using the expression \eqref{elasticenergy} we thus obtain
\begin{align*}
	&\mc F(t^k,\bm u^k,\gamma^k)- \mc F(0,\bm u^0,|u^0_1-u^0_2|)\\
	\le&\sum_{j=1}^k
	\int_{t^{j-1}}^{t^j}\int_{\Omega}\left(\sum_{i=1}^2\mathbb{C}_ie(u^{j-1}_i+w(s)-w(t^{j-1}))\right):e(\dot{w}(s))\d x\d s.
\end{align*}
We now fix $t\in [0,T]$ and we rewrite the above inequality in terms of the piecewise constant interpolants. Simple manipulations yield
\begin{align*}
	&\mc F(t^\tau,\bm u^\tau(t),\gamma^\tau(t))- \mc F(0,\bm u^0,|u_1^0-u^0_2|)\\
	\le&\mc W^\tau(t) +(\mc W^\tau(t^\tau)-\mc W^\tau(t))+\int_{0}^{t^\tau}\int_{\Omega}(\mathbb{C}_1+\mathbb{C}_2)e(w(s)-w^\tau(s)):e(\dot{w}(s))\d x\d s\\
	\le & \mc W^\tau(t)+\underbrace{\sup\limits_{\theta\in[0,T]}|\mc W^\tau(\theta^\tau)-\mc W^\tau(\theta)|+ C\int_{0}^T\Vert w(s)-w^\tau(s)\Vert_{H^{1}(\Omega)}\Vert\dot{w}(s)\Vert_{H^{1}(\Omega)}\d s}_{=:R^\tau},
\end{align*}
where in the last inequality we used \ref{hyp:C1}.

We now conclude the proof by showing that $\lim\limits_{\tau\to 0}R^\tau=0$. Indeed by means of \ref{hyp:C1} and \eqref{againbounds} one can bound it by
\begin{align*}
	R^\tau\le C\left( \sup\limits_{\theta\in[0,T]}\int_{\theta^\tau}^{\theta}\Vert\dot{w}(s)\Vert_{H^{1}(\Omega)}\d s+\Vert\dot{w}\Vert_{L^1(0,T;H^{1}(\Omega))}\sup\limits_{\theta\in[0,T]} \Vert w(\theta)-w^\tau(\theta)\Vert_{H^{1}(\Omega)} \right),
\end{align*}
and the right-hand side vanishes by assumption \eqref{externalloading}.
\end{proof}

\subsection{Extraction of convergent subsequences}
	As a consequence of the uniform bounds \eqref{againbounds}, in this section we extract convergent subsequences of the piecewise constant interpolants $(\bm u^\tau,\gamma^\tau)$ defined in \eqref{interpolants}. For the displacements we will employ Banach-Alaoglu Theorem, while for the history slip we will need the following version of Helly's Selection Theorem.
\begin{lemma}\label{helly}
	Let $\{f_n\}_{n\in\N}$ be a sequence of nondecreasing functions from $[0,T]$ to $C^0(\Omega)$ such that:
	\begin{itemize}
		\item[$\bullet$] the families $\{f_n(0)\}_{n\in\N}$ and $\{f_n(T)\}_{n\in\N}$ are locally equibounded;
		\item[$\bullet$] the family $\{f_n(t)\}_{n\in\N}$ is locally equicontinuous, uniformly with respect to $t\in [0,T]$.
	\end{itemize}
Then there exists a subsequence (not relabelled) and a nondecreasing function $f\colon [0,T]\to C^0(\Omega)$ such that $f_n(t)$ converges locally uniformly to $f(t)$ as $n\to +\infty$ for every $t\in [0,T]$.\\
Moreover, for every $t\in [0,T]$ the right and left limits $f^\pm (t)$, which are pointwise well defined by monotonicity, actually belong to $C^0(\Omega)$ and there holds\begin{equation}\label{unifpm}
		f^\pm(t)=\lim\limits_{h\to 0^\pm}f(t+h),\qquad\text{locally uniformly in $\Omega$.}
\end{equation}
\end{lemma}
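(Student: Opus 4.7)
My plan is to reduce to the scalar Helly theorem by pointwise extraction on a countable dense set, and then upgrade to locally uniform convergence in $x$ by exploiting the assumed $t$-uniform local equicontinuity. First, I would fix a countable dense subset $D=\{x_m\}_{m\in\N}\subseteq\Omega$. For each $x_m$, the real-valued sequence $t\mapsto f_n(t)(x_m)$ is nondecreasing and, using monotonicity together with the local equiboundedness at $t=0$ and $t=T$, is uniformly bounded (since $f_n(0)(x_m)\le f_n(t)(x_m)\le f_n(T)(x_m)$). The classical scalar Helly selection theorem yields a subsequence converging pointwise in $t$; a diagonal argument over $m$ produces a single subsequence (not relabelled) along which $f_n(t)(x_m)$ converges for every $t\in[0,T]$ and every $m\in\N$.

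Second, I would upgrade this to locally uniform convergence in $x$ at each fixed $t$. Fix $\Omega'\subset\subset\Omega$ and $\eta>0$; by $t$-uniform local equicontinuity there exists $\delta>0$ such that $|f_n(t)(x)-f_n(t)(y)|<\eta$ for every $n$, every $t\in[0,T]$ and every $x,y\in\overline{\Omega'}$ with $|x-y|<\delta$. Covering $\overline{\Omega'}$ by finitely many balls of radius $\delta$ centred at points of $D$, a standard three-term $\eta$-estimate shows that $\{f_n(t)\}_n$ is Cauchy in $C^0(\overline{\Omega'})$, hence converges uniformly to a function that I call $f(t)|_{\overline{\Omega'}}\in C^0(\overline{\Omega'})$. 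Letting $\Omega'$ exhaust $\Omega$ defines $f(t)\in C^0(\Omega)$, and passing to the pointwise limit in $f_n(s)\le f_n(t)$ for $s\le t$ yields $f(s)\le f(t)$, so $f$ is nondecreasing.

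For the second assertion, fix $t\in[0,T]$. The monotonicity of $s\mapsto f(s)(x)$ ensures that the pointwise right and left limits $f^\pm(t)(x):=\lim_{h\to 0^\pm} f(t+h)(x)$ exist and are bounded monotone limits. The family $\{f(s)\}_{s\in[0,T]}$ inherits the same local modulus of continuity as $\{f_n(s)\}$ by passing to the limit in the inequality $|f_n(s)(x)-f_n(s)(y)|<\eta$; consequently $f^\pm(t)$ satisfies the same local modulus and belongs to $C^0(\Omega)$. Locally uniform convergence then follows from Dini's theorem: on every compact $\overline{\Omega'}$, $h\mapsto f(t+h)|_{\overline{\Omega'}}$ is a monotone family of continuous functions converging pointwise to the continuous function $f^\pm(t)|_{\overline{\Omega'}}$, so the convergence is uniform along any sequence $h_n\to 0^\pm$, and monotonicity promotes this to the full one-sided limit.

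The main technical delicacy is Step~2, where the pointwise selection on $D$ obtained from scalar Helly has to be reconciled with the equicontinuity in $x$ to produce convergence in $C^0(\overline{\Omega'})$; once this is in place, the analysis of $f^\pm$ reduces to a routine Dini argument combined with preservation of the modulus of continuity under monotone limits.
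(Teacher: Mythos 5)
Your proof is correct, and it takes a genuinely different route from the paper's. The paper diagonalizes over a countable \emph{dense set of times} using Ascoli--Arzel\`a, which only gives convergence at those times; it then has to define one-sided limits $f^\pm$ directly from the dense-time limit, identify the exceptional set $E^c=\{t: f^+(t)\neq f^-(t)\}$, prove its countability via a telescoping $L^1$ estimate, and perform a second extraction on $E^c$ to cover all of $[0,T]$. You instead diagonalize over a countable \emph{dense set of spatial points} and invoke the scalar Helly selection theorem at each such point; since scalar Helly already yields pointwise convergence at \emph{every} $t\in[0,T]$, the $E/E^c$ machinery and the second extraction are entirely avoided. The $t$-uniform local equicontinuity then does the same work in both proofs (upgrading countable pointwise information to locally uniform information), but in yours it acts in the space variable rather than the time variable. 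Your treatment of $f^\pm$ (monotone pointwise limit, modulus of continuity preserved, Dini plus monotonicity to pass from sequences $h_n\to 0^\pm$ to the full one-sided limit) is sound; note you could also bypass Dini and give the uniform bound directly by a finite $\delta$-net and a three-term estimate, exactly as you did for $f_n(t)\to f(t)$, which keeps the whole proof self-contained. Net effect: your argument is shorter and cleaner than the paper's, at the modest cost of invoking the classical scalar Helly theorem where the paper relies only on Ascoli--Arzel\`a.
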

\begin{proof}
	We consider a countable open exhaustion $\{\Omega_k\}_{k\in \N}$ of $\Omega$, and we observe that by a diagonal argument it is enough to prove the result in a fixed set $\Omega_k$.
	
 	We take a countable and dense set $D\subseteq[0,T]$ containing $0$ and $T$ and by using Ascoli--Arzel\'a theorem and a diagonal argument we can extract a subsequence 
 	(not relabelled) and a function $f$ from $D$ to $C^0(\overline{\Omega_k})$ such that $f_n(t)$ converges uniformly in $\overline{\Omega_k}$ to $f(t)$ for every $t\in D$. Since each $f_n$ is non-decreasing, then 
 	$f$ is non-decreasing on $D$ as well.\par
 	For every $(t,x)\in[0,T]\times \overline{\Omega_k}$ we now define
 	\begin{equation}\label{limrl}
 		f^+(t,x):=\inf\limits_{s\ge t,\,s\in D}f(s,x),\quad f^-(t,x):=\sup\limits_{s\le t,\,s\in D}f(s,x),
 	\end{equation}
 	and we easily observe that
 	\begin{itemize}
 		\item[(a)] $f^-(t)=f(t)=f^+(t)$ for every $t\in D$;
 		\item[(b)] $f^-(t)\le f^+(t)$ for every $t\in [0,T]$;
 		\item[(c)] if $0\le s<t\le T$, then $f^+(s)\le f^-(t)$.
 	\end{itemize}
 	Since the family $\{f_n(t)\}_{n\in\N}$ is equicontinuous in $\overline{\Omega_k}$ uniformly with respect to time we obtain that the limit family $\{f(t)\}_{t\in D}$ is equicontinuous in $\overline{\Omega_k}$ as well.
 	This actually ensures that \eqref{limrl} can be improved in the following way:
 	\begin{equation}\label{unifrl}
 		f^+(t)=\lim\limits_{s\searrow t,\,s\in D}f(s),\quad f^-(t)=\lim\limits_{s\nearrow t,\,s\in D}f(s),\quad \text{ uniformly in }\overline{\Omega_k}.
 	\end{equation}
 	In particular, for every $t\in [0,T]$ the functions $f^+(t)$ and $f^-(t)$ are continuous in $\overline{\Omega_k}$.\par
 	We now introduce the set $E:=\{t\in [0,T]:\, f^+(t)=f^-(t)\}$ and for every $t\in E$ we define $f(t):=f^+(t)=f^-(t)$. Of course, by (a), the set $D$ is contained in
 	$E$ and the definition of $f$ agrees with the one we already had on $D$. We now prove that for every $t\in E$ we have $f_n(t)\to f(t)$ uniformly in $\overline{\Omega_k}$.
 	We already know it is true for $t\in D$, so we assume $t\in E\setminus D$. We fix two points $s'<t<t'$ such that $s',t'\in D$ and since the original sequence was
 	non-decreasing in time we easily get:
 	\begin{align*}
 		&\quad\,\Vert f_n(t)-f(t)\Vert_{C^0(\overline{\Omega_k})}\le \max\left\{	\Vert f_n(t')-f(t)\Vert_{C^0(\overline{\Omega_k})},	\Vert f_n(s')-f(t)\Vert_{C^0(\overline{\Omega_k})}\right\}\\
 		&\le \max\left\{\Vert f_n(t'){-}f(t')\Vert_{C^0(\overline{\Omega_k})}{+}\Vert f(t'){-}f(t)\Vert_{C^0(\overline{\Omega_k})},	\Vert f_n(s'){-}f(s')\Vert_{C^0(\overline{\Omega_k})}{+}\Vert f(s'){-}f(t)\Vert_{C^0(\overline{\Omega_k})}\right\}.
 	\end{align*}
 	Since $s'$ and $t'$ belong to $D$ we thus infer:
 	\begin{equation*}
 		\limsup\limits_{n\to +\infty}\Vert f_n(t)-f(t)\Vert_{C^0(\overline{\Omega_k})}\le \max\left\{	\Vert f(t')-f(t)\Vert_{C^0(\overline{\Omega_k})},	\Vert f(s')-f(t)\Vert_{C^0(\overline{\Omega_k})}\right\}.
 	\end{equation*}
 	Thanks to \eqref{unifrl} and since $t$ is in $E$, letting $t'\searrow t$ and $s'\nearrow t$ we finally conclude that $f_n(t)$ converges uniformly in $\overline{\Omega_k}$ to $f(t)$ for every $t\in E$.\par
 	Let us now show that the set $E^c=[0,T]\setminus E$ is countable.
 	First of all it is easy to see that $E^c$ coincides with the set $\bigcup_{j\in\N}A_{j}$ where for every $j\in \N$ we define
 	\begin{equation*}
 		A_{j}=\left\{t\in[0,T]:\, \int_{\Omega_k}\Big(f^+(t)-f^-(t)\Big)\d x\ge \frac 1j\right\}.
 	\end{equation*}
 	We conclude if we prove that each $A_{j}$ is finite. So we fix $t_1<t_2<\dots<t_r\in A_{j}$ and thanks to (c) we estimate:
 	\begin{align*}
 		\frac rj\le\sum_{i=1}^{r}\int_{\Omega_k}\Big(f^+(t_i)-f^-(t_i)\Big)\d x\le  \int_{\Omega_k}\Big(f^+(t_r)-f^-(t_1)\Big)\d x\le \Vert f(T)-f(0)\Vert_{L^1(\Omega_k)},
 	\end{align*}
 	thus $r$ is bounded from above and thus $A_j$ is finite.\par
 	So by using again Ascoli--Arzel\'a theorem and a diagonal argument we can extract a further subsequence and a function $f$ from $E^c$ to $C^0(\overline{\Omega_k})$ such that
 	$f_n(t)$ converges uniformly to $f(t)$ for every $t\in E^c$. Since in $E$ we already obtained the result, we conclude by noticing that such $f$ is non-decreasing in
 	the whole $[0,T]$ recalling that the original sequence was non-decreasing in time. Indeed \eqref{unifpm} in $\overline{\Omega_k}$ now easily follows by \eqref{unifrl}.
\end{proof}

\begin{prop}\label{propconvsubs}
	Assume \eqref{Omega}, \eqref{externalloading} and \eqref{initial}. Then there exists a subsequence $\tau_j$ and for all $t\in [0,T]$ there exist a further subsequence $\tau_j(t)$ (possibly depending on time), and functions $\bm u(t)\in (H^1_{D,w(t)}\cap C^{0,\alpha}_{\rm loc}({\Omega};\R^n))^2$ and $\gamma(t)\in C^{0,\alpha}_{\rm loc}({\Omega})$ for any $\alpha\in(0,1)$ such that for all $t\in [0,T]$ there hold:
	\begin{equation}\label{convtau}
		\begin{aligned}
			&	\bm u^{\tau_j(t)}(t)\xrightharpoonup[j\to+\infty]{H^1(\Omega;\R^n)^2} \bm u(t),\qquad\text{and}\qquad	\bm u^{\tau_j(t)}(t)\xrightarrow[j\to+\infty]{} \bm u(t) \text{ locally uniformly in $\Omega$},\\
			& \gamma^{\tau_j}(t) \xrightarrow[j\to+\infty]{} \gamma(t) \text{ locally uniformly in $\Omega$}.
		\end{aligned}
	\end{equation}
In particular one has $\bm u(0)=\bm u^0$, $\gamma(0)=|u^0_1-u^0_2|$.\\
Moreover the function $\gamma$ is nondecreasing in time, and
\begin{equation}\label{gammasup}
	\gamma(t,x)\ge \sup\limits_{s\in[0,t]}|u_1(s,x)-u_2(s,x)|, \quad\text{for every $(t,x)\in [0,T]\times\Omega$.}
\end{equation}
Furthermore one has $u\in B([0,T];H^1(\Omega;\R^n)^2)$ and for all $\Omega'\subset\subset\Omega$ and any $\alpha\in(0,1)$ there also hold $u\in B([0,T];C^{0,\alpha}(\overline{\Omega'};\R^n))^2)$ and $\gamma\in B([0,T];C^{0,\alpha}(\overline{\Omega'}))$. 
\end{prop}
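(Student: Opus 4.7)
The plan is a two-stage extraction. First I use the Helly-type result in Lemma~\ref{helly} to pass to the limit along a single subsequence $\tau_j$ in the monotone history variable $\gamma^\tau$, obtaining $\gamma(t)$ simultaneously for every $t \in [0,T]$. Then, for each fixed $t$ separately, I extract a further subsequence $\tau_j(t)$ of $\tau_j$ and apply Banach--Alaoglu in $H^1$ together with Ascoli--Arzel\`a in the local H\"older topology to produce $\bm u(t)$. Both extractions are powered by the uniform bounds \eqref{againbounds}.

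\textbf{Helly's theorem for $\gamma^\tau$.} The update rule in \eqref{discralg2} gives $\gamma^k\ge\gamma^{k-1}$ pointwise, so $\gamma^\tau\colon [0,T]\to C^0(\Omega)$ is nondecreasing in time. The second line of \eqref{againbounds}, valid for every $\Omega'\subset\subset\Omega$ and any $\alpha\in (0,1)$, yields both the local equiboundedness at $t=0,T$ and the local equicontinuity of $\gamma^\tau(t)$ uniformly in $t$. Lemma~\ref{helly} then produces the subsequence $\tau_j$ and a nondecreasing $\gamma\colon[0,T]\to C^0(\Omega)$ such that $\gamma^{\tau_j}(t)\to\gamma(t)$ locally uniformly in $\Omega$ for every $t$. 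Since local H\"older seminorms are preserved under locally uniform convergence, one also obtains $\gamma(t)\in C^{0,\alpha}_{\mathrm{loc}}(\Omega)$ and $\gamma\in B([0,T];C^{0,\alpha}(\overline{\Omega'}))$.

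\textbf{Pointwise extraction for $\bm u^\tau$ and initial data.} Fix $t\in[0,T]$. By \eqref{againbounds}, the family $\{\bm u^{\tau_j}(t)\}_j$ is bounded in $H^1(\Omega;\R^n)^2$ and, for every $\Omega'\subset\subset\Omega$, in $C^{0,\alpha}(\overline{\Omega'};\R^n)^2$. A standard diagonal argument combining Banach--Alaoglu with Ascoli--Arzel\`a along a countable exhaustion of $\Omega$ extracts a further subsequence $\tau_j(t)$ of $\tau_j$ and a limit $\bm u(t)\in H^1(\Omega;\R^n)^2\cap C^{0,\alpha}_{\mathrm{loc}}(\Omega;\R^n)^2$ satisfying the two convergences in \eqref{convtau}. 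The trace constraint $\bm u^{\tau_j(t)}(t)\in (H^1_{D,w(t^{\tau_j(t)})})^2$ passes to the limit because $w\in AC([0,T];H^1)$ implies $w(t^{\tau_j(t)})\to w(t)$ in $H^1(\Omega;\R^n)$ and the trace operator is weakly continuous, so $\bm u(t)\in (H^1_{D,w(t)})^2$. The initial-time identities $\bm u(0)=\bm u^0$ and $\gamma(0)=|u_1^0-u_2^0|$ are immediate from the initializations in \eqref{discralg2}. Finally, the time-boundedness $\bm u\in B([0,T];H^1(\Omega;\R^n)^2)\cap B([0,T];C^{0,\alpha}(\overline{\Omega'};\R^n)^2)$ follows from \eqref{againbounds} by weak lower semicontinuity of the $H^1$-norm and by preservation of local H\"older norms under locally uniform convergence, both applied along $\tau_j(t)$.

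\textbf{The inequality \eqref{gammasup}.} This step is the only one in which the two extractions must talk to each other, and is the main technical point. Fix $t\in[0,T]$, $s\in[0,t]$ and $x\in\Omega$. Since $\tau_j(s)$ is a subsequence of $\tau_j$, the Helly convergence still gives $\gamma^{\tau_j(s)}(t,x)\to\gamma(t,x)$, while the Banach--Alaoglu extraction at time $s$ gives $\bm u^{\tau_j(s)}(s,x)\to \bm u(s,x)$. For $j$ sufficiently large, $s^{\tau_j(s)}\le t^{\tau_j(s)}$, and pointwise monotonicity of the discrete history slip yields
\[
\gamma^{\tau_j(s)}(t,x)\ge \gamma^{\tau_j(s)}(s,x)\ge |u_1^{\tau_j(s)}(s,x)-u_2^{\tau_j(s)}(s,x)|.
\]
Passing to the limit in $j$ and then taking the supremum over $s\in[0,t]$ gives \eqref{gammasup}. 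The obstacle here is that $\bm u(t)$ is only defined through the time-dependent subsequence $\tau_j(t)$, so no joint convergence of $(\bm u^\tau,\gamma^\tau)$ is available; the coupling is recovered exclusively from the monotonicity of $\gamma^{\tau_j}$ in time, which survives any further subsequence, combined with the locally uniform (hence pointwise in $x$) convergence secured in the previous step.
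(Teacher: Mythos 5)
Your proof is correct and follows essentially the same route as the paper: Helly's selection lemma for $\gamma^\tau$, Banach--Alaoglu plus Ascoli--Arzel\`a for $\bm u^\tau(t)$ at each fixed time, and the coupling \eqref{gammasup} obtained from the discrete monotonicity $\gamma^{\tau}(t,x)\ge\gamma^{\tau}(s,x)\ge|u_1^{\tau}(s,x)-u_2^{\tau}(s,x)|$ passed to the limit along the time-dependent subsequence $\tau_j(s)$. The only cosmetic difference is that you prove \eqref{gammasup} directly whereas the paper presents the identical inequality chain as a proof by contradiction.
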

\begin{proof}
	The existence of convergent subsequences as in \eqref{convtau} follows by Lemma~\ref{helly} for $\gamma$ and by Banach-Alaoglu and Ascoli-Arzel\'a theorems for $\bm u$, thanks to the uniform bounds \eqref{againbounds}. The same bounds also yield $u\in B([0,T];H^1(\Omega;\R^n)^2)$ and $u\in B([0,T];C^{0,\alpha}(\overline{\Omega'};\R^n))^2)$, $\gamma\in B([0,T];C^{0,\alpha}(\overline{\Omega'}))$ for all $\Omega'\subset\subset\Omega$ and $\alpha\in(0,1)$.
	
	 In order to prove \eqref{gammasup}, assume by contradiction that there exists a pair $(\bar t,\bar x)\in[0,T]\times\Omega$ such that:
	\begin{equation*}
		\gamma(\bar t,\bar x)<\sup\limits_{s\in[0,\bar t\,]}|u_1(s,\bar x)-u_2(s,\bar x)|.
	\end{equation*}
	Thus there exists a time $\bar s\in[0,\bar t\,]$ for which $|u_1(\bar s,\bar x)-u_2(\bar s,\bar x)|>\gamma(\bar t,\bar x)$, and so we infer:
	\begin{align*}
		|u_1(\bar s,\bar x)-u_2(\bar s,\bar x)|&>\gamma(\bar t,\bar x)=\lim\limits_{j\to +\infty}\gamma^{\tau_j}(\bar t, \bar x)\ge\lim\limits_{j\to +\infty}\gamma^{\tau_j}(\bar s, \bar x)\\
		&\ge \limsup\limits_{j\to +\infty}|u_1^{\tau_j}(\bar s,\bar x)-u_2^{\tau_j}(\bar s,\bar x)|\\
		&\ge \lim\limits_{j\to +\infty}|u_1^{\tau_j(\bar s)}(\bar s,\bar x)-u_2^{\tau_j(\bar s)}(\bar s,\bar x)|=|u_1(\bar s,\bar x)-u_2(\bar s,\bar x)|,
	\end{align*}
	which is a contradiction.
\end{proof}

 In the next proposition we pass to the limit the discrete energy inequality stated in Proposition~\ref{propdiscrineq}.

\begin{prop}\label{propEB1}
	Assume \eqref{Omega}, \eqref{externalloading} and \eqref{initial}. Then for every $t\in [0,T]$ the limit functions $\bm u$, $\gamma$ obtained in Proposition~\ref{propconvsubs} satisfy the upper energy inequality
	\begin{equation}\label{EIless}
		\mc F(t,\bm u(t),\gamma(t))\le \mc F(0,\bm u^0,|u_1^0-u_2^0|)+\mc W(t),
	\end{equation}
	where the work of external loading $\mc W$ has been introduced in \eqref{work}.
\end{prop}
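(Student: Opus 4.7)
The plan is to fix $t\in [0,T]$ and pass to the limit $j\to\infty$ in the discrete energy inequality \eqref{eq:discrineq} written for $\tau=\tau_j(t)$, the $t$-dependent extraction of Proposition~\ref{propconvsubs}. Since $t^{\tau_j(t)}\to t$ and $R^{\tau_j(t)}\to 0$ by Proposition~\ref{propdiscrineq}, everything reduces to passing to the liminf in the total energy on the left and proving convergence of the work on the right.

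For the left-hand side, I would first observe that $\bm u(t)\in (H^1_{D,w(t)})^2$, so that $\mc F(t,\bm u(t),\gamma(t))<+\infty$: indeed $w(t^{\tau_j(t)})\to w(t)$ in $H^1$ by \eqref{externalloading}, and the affine subspace $(w(t),w(t))+(H^1_{D,0})^2$ is weakly $H^1$-closed, so it contains the weak limit $\bm u(t)$. Weak lower semicontinuity of the nonnegative quadratic form $\mc E$ (a consequence of \ref{hyp:C2}--\ref{hyp:C5}) yields $\mc E(\bm u(t))\le \liminf_j \mc E(\bm u^{\tau_j(t)}(t))$; the locally uniform convergences in \eqref{convtau}, together with the continuity and uniform boundedness of $\Phi$ provided by Proposition~\ref{propPhi}(i), make Dominated Convergence applicable to the cohesive term, giving $\mc K(|u_1^{\tau_j(t)}(t)-u_2^{\tau_j(t)}(t)|,\gamma^{\tau_j(t)}(t))\to \mc K(|u_1(t)-u_2(t)|,\gamma(t))$. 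Summing, $\mc F(t,\bm u(t),\gamma(t))\le \liminf_j \mc F(t^{\tau_j(t)},\bm u^{\tau_j(t)}(t),\gamma^{\tau_j(t)}(t))$.

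The hard part is the convergence of $\mc W^{\tau_j(t)}(t)=\int_0^t\sum_i\int_\Omega \mathbb{C}_i e(u_i^{\tau_j(t)}(s)):e(\dot w(s))\d x\d s$, since the extraction $\tau_j(t)$ is time-dependent while the integrand involves $\bm u^{\tau_j(t)}(s)$ for every $s\in[0,t]$. I would bypass this by showing that $\bm u^{\tau_j}(s)\rightharpoonup \bm u(s)$ weakly in $H^1$ along the whole subsequence $\tau_j$, for every $s\in[0,T]$, removing the need for a further $s$-dependent extraction. Indeed, by \eqref{againbounds} any sub-subsequence admits a weak cluster point $\tilde{\bm u}(s)$; by passing to the limit in the discrete minimality property at time $s$ exactly as in the proof of Corollary~\ref{corconv}, $\tilde{\bm u}(s)$ is a minimizer of a convex functional of the same form as the one for which $\bm u(s)$ is a minimizer, and the strict convexity ensured by the smallness assumption \eqref{eq:hyplambda} and Lemma~\ref{lemma:convexity} forces uniqueness, so $\tilde{\bm u}(s)=\bm u(s)$. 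Since $\tau_j(t)$ is a subsequence of $\tau_j$, the pointwise weak convergence $\bm u^{\tau_j(t)}(s)\rightharpoonup \bm u(s)$ then holds for every $s\in[0,t]$. Hence the integrand $\sum_i\int_\Omega \mathbb{C}_i e(u_i^{\tau_j(t)}(s)):e(\dot w(s))\d x$ converges pointwise in $s$ to its analogue for $\bm u(s)$, while Cauchy--Schwarz together with \ref{hyp:C1} and \eqref{againbounds} dominates it by $C\|\dot w(s)\|_{H^1}$, which is integrable on $[0,T]$ by \eqref{externalloading}; Dominated Convergence therefore yields $\mc W^{\tau_j(t)}(t)\to \mc W(t)$.

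Combining these three ingredients with $R^{\tau_j(t)}\to 0$ in \eqref{eq:discrineq} produces the upper energy inequality \eqref{EIless}.
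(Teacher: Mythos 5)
Your treatment of the left-hand side (weak lower semicontinuity of $\mc E$ plus Dominated Convergence for $\mc K$ via the locally uniform convergences in \eqref{convtau}) reproduces the paper's argument exactly. For the work term the paper does not give a self-contained argument but cites \cite[Proposition~3.9]{BonCavFredRiva}; your route -- upgrading the $t$-dependent extraction of Proposition~\ref{propconvsubs} to pointwise-in-$s$ weak $H^1$-convergence along the \emph{whole} subsequence by uniqueness of the minimizer, then applying Dominated Convergence -- is a legitimate and natural way to fill that gap, and in fact yields a limit rather than merely a $\limsup$. Two issues should be flagged, however.

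First, the claim that any cluster point $\tilde{\bm u}(s)$ of $\bm u^{\tau_j}(s)$ minimizes ``a convex functional of the same form as the one for which $\bm u(s)$ is a minimizer'' is not established by arguing ``exactly as in Corollary~\ref{corconv}'': in Corollary~\ref{corconv} the history-slip argument $\gamma^{k-1}_\eps$ is frozen in $k$ while $\eps\to 0$, whereas in the $\tau\to 0$ passage the relevant argument is $\gamma^{\tau_j}(s-\tau_j)$, which moves with $j$ and is \emph{not} known a priori to converge to $\gamma(s)$ (only to something between $\gamma^-(s)$ and $\gamma(s)$). To conclude that $\tilde{\bm u}(s)$ minimizes $\mc F(s,\cdot,\gamma(s))$ -- the same functional that $\bm u(s)$ minimizes -- one must replace $\gamma^{k-1}$ with $\gamma^k$ on the left using property $(v)$ of Proposition~\ref{propPhi}, and bound $\gamma^{k-1}\le\gamma^k$ on the competitor side using the monotonicity from property $(iii)$, exactly as done in the proof of Proposition~\ref{propGS}; that is the right reference, not Corollary~\ref{corconv}.

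Second, and more substantively, your uniqueness step relies on Lemma~\ref{lemma:convexity} and hence on the smallness hypothesis \eqref{eq:hyplambda}, which is \emph{not} among the stated hypotheses of Proposition~\ref{propEB1} (only \eqref{Omega}, \eqref{externalloading}, \eqref{initial} appear; \eqref{eq:hyplambda} enters only from Lemma~\ref{lemma:convexity} onward). So your proof establishes the proposition under strictly stronger assumptions. This does no harm to the proof of Theorem~\ref{mainthm}, where \eqref{eq:hyplambda} is assumed anyway, but it does not prove the proposition as literally stated; the paper's delegated argument is presumably designed to avoid invoking uniqueness of minimizers at this stage.
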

\begin{proof}
	We fix $t\in [0,T]$. By lower semicontinuity of the elastic energy in the weak topology of $H^1(\Omega)$ we have 
	\begin{equation}\label{eq:lsc1}
		\mc E(\bm u(t))\le \liminf\limits_{j\to +\infty}\mc E(\bm u^{\tau_j(t)}(t)),
	\end{equation}
	while by Dominated Convergence Theorem together with the locally uniform convergence of $\bm u^{\tau_j(t)}(t)$ and $\gamma^{\tau_j(t)}(t)$ we deduce 
	\begin{equation}\label{eq:lsc2}
		\mc K(|u_1(t)-u_2(t)|,\gamma(t))=\lim\limits_{j\to +\infty} \mc K(|u_1^{\tau_j(t)}(t)-u_2^{\tau_j(t)}(t)|,\gamma^{\tau_j(t)}(t)).
	\end{equation}
	Hence we obtain
	\begin{align*}
		\mc F(t,\bm u(t),\gamma(t))&=\mc E(\bm u(t))+\mc K(|u_1(t)-u_2(t)|,\gamma(t))\\
		&\le \liminf\limits_{j\to +\infty}(\mc E(\bm u^{\tau_j(t)}(t))+\mc K(|u_1^{\tau_j(t)}(t)-u_2^{\tau_j(t)}(t)|,\gamma^{\tau_j(t)}(t)))\\
		&=\liminf\limits_{j\to +\infty}\mc F(t^{\tau_j(t)},\bm u^{\tau_j(t)}(t), \gamma^{\tau_j(t)}(t))\\
		&\le  \mc F(0,\bm u^0,|u_1^0-u^0_2|)+ \limsup\limits_{j\to +\infty} \mc W^{\tau_j(t)}(t),
	\end{align*}
where in the last inequality we exploited \eqref{eq:discrineq}.

By arguing exactly as in \cite[Proposition~3.9]{BonCavFredRiva} one can finally prove that
\begin{equation*}
	\limsup\limits_{j\to +\infty} \mc W^{\tau_j(t)}(t)\le \mc W(t),
\end{equation*}
and we conclude.
\end{proof}

We now show that the limit pair $(\bm u,\gamma)$ fulfils the global stability condition \ref{GS}, with $\gamma$ in place of $\delta_h$.

\begin{prop}\label{propGS}
	Assume \eqref{Omega}, \eqref{externalloading}, \eqref{initial} and \eqref{Mininitial}. Then for every $t\in [0,T]$ the limit functions $\bm u$, $\gamma$ obtained in Proposition~\ref{propconvsubs} satisfy the minimality property
	\begin{equation}\label{eq:globmin}
		\mc F(t,\bm u(t),\gamma(t))\le \mc F(t,\bm v,\gamma(t)),\quad \text{ for every }\bm v\in H^1(\Omega;\R^n)^2.
	\end{equation}
\end{prop}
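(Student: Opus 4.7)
The plan is to transfer the discrete minimality from \eqref{discralg2}$_1$ to the limit via a suitable recovery sequence, using properties $(iii)$ and $(v)$ of Proposition~\ref{propPhi} to align the history slip variable with $\gamma^{\tau_j(t)}(t)$ on both sides of the minimality inequality before passing to the limit.

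First, if the competitor $\bm v \in H^1(\Omega;\R^n)^2$ does not satisfy $\bm v \in (H^1_{D,w(t)})^2$, then $\mc F(t,\bm v,\gamma(t)) = +\infty$ and there is nothing to prove. Otherwise, I would introduce the recovery sequence
\begin{equation*}
	\bm v^j := \bm v + \bm w(t^{\tau_j(t)}) - \bm w(t), \qquad \text{where } \bm w(s) = (w(s),w(s)),
\end{equation*}
which belongs to $(H^1_{D,w(t^{\tau_j(t)})})^2$ and hence is an admissible competitor in \eqref{discralg2}$_1$ at the discrete time $t^k = t^{\tau_j(t)}$. By \eqref{externalloading}, $\bm v^j \to \bm v$ strongly in $H^1(\Omega;\R^n)^2$.

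Next, I would apply discrete minimality with $\bm v^j$ and then use $(v)$ of Proposition~\ref{propPhi} on the left (to rewrite $\gamma^{k-1}$ as $\gamma^k$ in the cohesive term involving $u^k$) and $(iii)$ on the right combined with $\gamma^{k-1}\le \gamma^k$ (to upper-bound $\mc K(|v_1^j-v_2^j|,\gamma^{k-1})$ by $\mc K(|v_1^j-v_2^j|,\gamma^k)$). In terms of interpolants this gives
\begin{equation*}
	\mc F(t^{\tau_j(t)},\bm u^{\tau_j(t)}(t),\gamma^{\tau_j(t)}(t)) \le \mc E(\bm v^j) + \mc K(|v_1^j-v_2^j|,\gamma^{\tau_j(t)}(t)).
\end{equation*}

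Finally, I would pass to the limit $j\to +\infty$: the left-hand side is controlled from below by $\mc F(t,\bm u(t),\gamma(t))$ using weak lower semicontinuity of $\mc E$ together with dominated convergence for $\mc K$ (relying on local uniform convergence of $\bm u^{\tau_j(t)}(t)$ and $\gamma^{\tau_j(t)}(t)$ from Proposition~\ref{propconvsubs} and boundedness of $\Phi$ from $(i)$ of Proposition~\ref{propPhi}), exactly as in the beginning of Proposition~\ref{propEB1}. The right-hand side converges to $\mc F(t,\bm v,\gamma(t))$: strong $H^1$ convergence handles $\mc E(\bm v^j)$, and dominated convergence handles the cohesive term, since $|v_1^j-v_2^j|\to |v_1-v_2|$ a.e.\ in $\Omega$ along a subsequence (extracted from strong $L^2$ convergence) while $\gamma^{\tau_j(t)}(t) \to \gamma(t)$ locally uniformly, with the dominant $\|\Phi\|_\infty<+\infty$.

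I do not expect any genuine obstacle: the only slightly delicate step is the limit of $\mc K(|v_1^j-v_2^j|,\gamma^{\tau_j(t)}(t))$, but this is a clean dominated convergence argument once boundedness of $\Phi$ is invoked. The key conceptual ingredient, which makes everything work without having to deal with left/right limits of the monotone-in-time map $t\mapsto \gamma(t)$, is the identity $\mc K(|u_1^k-u_2^k|,\gamma^{k-1}) = \mc K(|u_1^k-u_2^k|,\gamma^k)$ coming from property $(v)$ of Proposition~\ref{propPhi}, together with the monotonicity of $\Phi(y,\cdot)$ which lets us upgrade $\gamma^{k-1}$ to $\gamma^k$ in the competitor term for free.
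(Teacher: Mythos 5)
Your proof is correct and follows essentially the same route as the paper: recovery sequence $\bm v^j=\bm v+\bm w(t^{\tau_j(t)})-\bm w(t)$, discrete minimality from \eqref{discralg2}$_1$, property $(v)$ of Proposition~\ref{propPhi} to align the history slip on the left, monotonicity $(iii)$ to upgrade $\gamma^{k-1}$ to $\gamma^k$ on the right, then weak lower semicontinuity of $\mc E$ and dominated convergence for $\mc K$. Two small remarks: (1) since the Dirichlet shift is the same in both components, $v_1^j-v_2^j=v_1-v_2$ identically, so no subsequence extraction for a.e.\ convergence of $|v_1^j-v_2^j|$ is actually needed; (2) the argument as written invokes \eqref{discralg2}$_1$ at $t^{\tau_j(t)}$, which requires $k\ge1$, so the case $t=0$ must be handled separately by directly invoking assumption \eqref{Mininitial}, exactly as the paper does.
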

\begin{proof}
	Fix $t\in [0,T]$. We need to prove that
	\begin{equation*}
		\mc E(\bm u(t))+\mc K(|u_1(t)-u_2(t)|,\gamma(t))\le \mc E(\bm v)+\mc K(|v_1-v_2|,\gamma(t)),\quad \text{ for every }\bm v\in (H^1_{D,w(t)})^2.
	\end{equation*}
If $t=0$ the above inequality is true by assumption \eqref{Mininitial}, so we consider $t\in(0,T]$ and we fix $\bm v\in (H^1_{D,w(t)})^2$.

By means of \eqref{eq:lsc1} and \eqref{eq:lsc2} we have
\begin{align*}
	&\mc E(\bm u(t))+\mc K(|u_1(t)-u_2(t)|,\gamma(t))\\
	\le& \liminf\limits_{j\to +\infty}(\mc E(\bm u^{\tau_j(t)}(t))+\mc K(|u_1^{\tau_j(t)}(t)-u_2^{\tau_j(t)}(t)|,\gamma^{\tau_j(t)}(t)))\\
	=&\liminf\limits_{j\to +\infty}(\mc E(\bm u^{\tau_j(t)}(t))+\mc K(|u_1^{\tau_j(t)}(t)-u_2^{\tau_j(t)}(t)|,\gamma^{\tau_j(t)}(t-\tau_j(t))),
\end{align*}
where in the last equality we used $(v)$ in Proposition~\ref{propPhi}.

 We now exploit the minimality property of the discrete interpolants (see \eqref{discralg2}$_1$) in order to continue the above chain of inequalities: by considering as competitor for $\bm u^{\tau_j(t)}(t)$ the function $\bm v+\bm w(t^{\tau_j(t)})-\bm w(t)\in (H^1_{D,w(t^{\tau_j(t)})})^2$ we obtain
 \begin{align*}
 	&\mc E(\bm u(t))+\mc K(|u_1(t)-u_2(t)|,\gamma(t))\\
 	\le& \liminf\limits_{j\to +\infty}(\mc E(\bm v+\bm w(t^{\tau_j(t)})-\bm w(t))+\mc K(|v_1-v_2|,\gamma^{\tau_j(t)}(t-\tau_j(t)))\\
 	\le & \liminf\limits_{j\to +\infty}(\mc E(\bm v+\bm w(t^{\tau_j(t)})-\bm w(t))+\mc K(|v_1-v_2|,\gamma^{\tau_j(t)}(t)))\\
 	=& \mc E(\bm v)+\mc K(|v_1-v_2|,\gamma(t)),
 \end{align*}
where the second inequality follows by $(iii)$ in Proposition~\ref{propPhi} since $\gamma^{\tau_j(t)}(t-\tau_j(t))\le \gamma^{\tau_j(t)}(t)$, while the last equality is a consequence of the strong continuity in $H^1(\Omega)$ of the elastic energy $\mc E$ (indeed $w(t^{\tau_j(t)})$ strongly converges to $w(t)$ in $H^1(\Omega;\R^n)$ as $j\to +\infty$) and again of the Dominated Convergence Theorem for the cohesive energy $\mc K$. So we conclude.
\end{proof}

 As a standard consequence of the global minimality property \eqref{eq:globmin} one can also deduce the opposite energy inequality to \eqref{EIless}. We refer to \cite[Proposition~3.10]{BonCavFredRiva} for the details.

\begin{prop}\label{propEB2}
	Assume \eqref{Omega}, \eqref{externalloading}, \eqref{initial} and \eqref{Mininitial}. Then for every $t\in [0,T]$ the limit functions $\bm u$, $\gamma$ obtained in Proposition~\ref{propconvsubs} satisfy the lower energy inequality
	\begin{equation}\label{EIgreat}
		\mc F(t,\bm u(t),\gamma(t))\ge \mc F(0,\bm u^0,|u_1^0-u_2^0|)+\mc W(t),
	\end{equation}
	where the work of external loading $\mc W$ has been introduced in \eqref{work}.
\end{prop}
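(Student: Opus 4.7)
I would follow the standard Moreau--Mielke route from global stability to the lower energy inequality, adapted to the cohesive setting essentially as in \cite[Proposition~3.10]{BonCavFredRiva}.

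Fix $t\in(0,T]$ and pick an arbitrary partition $0=s_0<s_1<\dots<s_N=t$. For each $i\in\{0,\dots,N-1\}$, I would test the stability property \eqref{eq:globmin} at time $s_i$ against the admissible competitor $\bm v^i:=\bm u(s_{i+1})+\bm w(s_i)-\bm w(s_{i+1})\in (H^1_{D,w(s_i)})^2$, where $\bm w(s):=(w(s),w(s))$. Since $v^i_1-v^i_2=u_1(s_{i+1})-u_2(s_{i+1})$, the cohesive part on the right-hand side evaluates to $\mc K(|u_1(s_{i+1})-u_2(s_{i+1})|,\gamma(s_i))$, which by monotonicity of $\gamma$ (Proposition~\ref{propconvsubs}) and of $\Phi(y,\cdot)$ (item $(iii)$ of Proposition~\ref{propPhi}) is at most $\mc K(|u_1(s_{i+1})-u_2(s_{i+1})|,\gamma(s_{i+1}))$. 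Rewriting the elastic part via the fundamental theorem of calculus along $\sigma\mapsto\bm u(s_{i+1})+\bm w(\sigma)-\bm w(s_{i+1})$ and rearranging produces
\begin{equation*}
\mc F(s_{i+1},\bm u(s_{i+1}),\gamma(s_{i+1}))-\mc F(s_i,\bm u(s_i),\gamma(s_i))\ge \int_{s_i}^{s_{i+1}}\!\int_\Omega \sum_{j=1}^2 \C_j e(u_j(s_{i+1})+w(\sigma)-w(s_{i+1})):e(\dot w(\sigma))\d x\d\sigma.
\end{equation*}
Summing over $i$ telescopes the left-hand side into $\mc F(t,\bm u(t),\gamma(t))-\mc F(0,\bm u^0,|u_1^0-u_2^0|)$, so the proposition reduces to bounding the resulting Riemann-type sum $S_N$ from below by $\mc W(t)$ in the limit of fine partitions.

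The final step is therefore to refine the partition (mesh $\to 0$) and identify $\lim_N S_N=\mc W(t)$. The $\bm w$-perturbation contribution, namely the piece $\int_{s_i}^{s_{i+1}}\!\int_\Omega\C_j e(w(\sigma)-w(s_{i+1})):e(\dot w(\sigma))\d x\d\sigma$, vanishes in the limit by uniform continuity of $w\colon[0,T]\to H^1(\Omega;\R^n)$ (guaranteed by \eqref{externalloading}) paired with the $L^1$-integrability of $\Vert\dot w\Vert_{H^1}$. The hard part is the remaining sum $\sum_i\int_{s_i}^{s_{i+1}}\!\int_\Omega\C_j e(u_j(s_{i+1})):e(\dot w(\sigma))\d x\d\sigma$: since $\bm u$ is a priori only in $B([0,T];H^1(\Omega;\R^n)^2)$, with no known strong time-continuity at this stage of the proof, ordinary Riemann-sum convergence is unavailable. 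Following the argument of \cite[Proposition~3.10]{BonCavFredRiva}, I would select a sequence of partitions along which the piecewise-constant interpolant of $\bm u$ converges to $\bm u(\sigma)$ a.e.\ in $\sigma$ with respect to the finite measure $\Vert e(\dot w(\sigma))\Vert_{L^2(\Omega)}\d\sigma$, and conclude via dominated convergence using the uniform bound \eqref{againbounds} on $\Vert\bm u\Vert_{H^1}$. This identifies $\lim_N S_N=\mc W(t)$ and delivers \eqref{EIgreat}.
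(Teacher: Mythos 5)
Your proposal is correct and follows essentially the same route as the paper, which itself defers the details to \cite[Proposition~3.10]{BonCavFredRiva} — the very reference you cite and whose argument you faithfully reproduce: test stability at $s_i$ against $\bm u(s_{i+1})+\bm w(s_i)-\bm w(s_{i+1})$, absorb $\gamma(s_i)\le\gamma(s_{i+1})$ via $(iii)$ of Proposition~\ref{propPhi}, telescope, and handle the Riemann sums via a subsequence of partitions plus dominated convergence. You also correctly flag the genuine subtlety that $\bm u$ is, at this stage, only known to lie in $B([0,T];H^1)$, so that naive Riemann-sum convergence is unavailable and the choice of partitions must be adapted.
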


\subsection{Characterization of the irreversible function $\gamma$}\label{subsec:gammadelta}

Up to now we have proved that the limit pair $(\bm u,\gamma)$ obtained in Proposition~\ref{propconvsubs} satisfy both the global stability condition \ref{GS} and the energy balance \ref{EB}, but with the fictitious irreversible function $\gamma$ instead of the history slip $\delta_h$ defined in \eqref{historyslip}. The scope of this section is showing that $\gamma$ actually coincides with $\delta_h$. We will employ a time-regularity argument introduced in \cite{BonCavFredRiva}, which will also ensures the remaining properties of $\bm u$ stated in Theorem~\ref{mainthm}.

To this aim, it is convenient to introduce the \lq\lq shifted\rq\rq energy $\overline{\mc F}\colon [0,T]\times (H^1_{D,0})^2\times L^0(\Omega)^+\to [0,+\infty)$ defined as
\begin{equation*}
	\overline{\mc F}(t,\bar{\bm{u}},\gamma):=\mc F(t,\bar{\bm{u}}+\bm{w}(t),\gamma)=\mc E(\bar{\bm{u}}+\bm{w}(t))+\mc K(|\bar{u}_1-\bar{u}_2|,\gamma).
\end{equation*}

By setting $\bar{\bm u}(t):=\bm u(t)-\bm w(t)$, we can rephrase Propositions~\ref{propEB1}, \ref{propGS} and \ref{propEB2} saying that for all $t\in[0,T]$ there hold
\begin{subequations}
	\begin{align}
		&\label{GS'}\bullet\,\overline{\mc F}(t,\bar{\bm{u}}(t),\gamma(t))\le \overline{\mc F}(t,\bar{\bm{v}},\gamma(t)),\qquad\text{for all }\bar{\bm{v}}\in (H^1_{D,0})^2;\\
		&\label{EB'}\bullet \overline{\mc F}(t,\bar{\bm{u}}(t),\gamma(t))=\overline{\mc F}(0,\bar{\bm{u}}(0),\gamma(0))+\int_{0}^{t}\partial_t\overline{\mc F}(r,\bar{\bm{u}}(r),\gamma(r))\d r.
	\end{align}
\end{subequations}

We also observe that for a.e. $t\in [0,T]$ and for all $\bar{\bm{u}}^a,\bar{\bm{u}}^b\in (H^1_{D,0})^2$ and $\gamma^a,\gamma^b\in L^0(\Omega)^+$ one has
\begin{equation}\label{eq:boundwork}
	|\partial_t\overline{\mc F}(t,\bar{\bm{u}}^a,\gamma^a)-\partial_t\overline{\mc F}(t,\bar{\bm{u}}^b,\gamma^b)|\le C\|\dot w(t)\|_{H^1(\Omega)}\|\bar{\bm u}^a-\bar{\bm u}^b\|_{H^1(\Omega)^2}.
\end{equation}

A key property in order to prove the time-regularity of the displacement $\bm u$ will be the (uniform) convexity of the shifted energy $\overline{\mc F}$ with respect to its second entrance. For this reason, assumption \eqref{eq:hyplambda} will be needed.

\begin{lemma}\label{lemma:convexity}
	Assume \eqref{eq:hyplambda}. Then for every $(t,\gamma)\in [0,T]\times L^0(\Omega)^+$ the map $\bar{\bm{u}}\mapsto \overline{\mc F}(t,\bar{\bm{u}},\gamma)$ is uniformly convex in $(H^1_{D,0})^2$, namely there exists $\mu>0$ such that
	\begin{equation}\label{eq:unifconv}
		\overline{\mc F}(t,\theta\bar{\bm{u}}^a+(1-\theta)\bar{\bm{u}}^b,\gamma)\le \theta\overline{\mc F}(t,\bar{\bm{u}}^a,\gamma)+(1-\theta)\overline{\mc F}(t,\bar{\bm{u}}^b,\gamma)-\frac \mu 2\theta(1-\theta)\|\bar{\bm{u}}^a-\bar{\bm{u}}^b\|^2_{H^1(\Omega)^2},
	\end{equation}
	for all $\theta\in [0,1]$, $\bar{\bm{u}}^a,\bar{\bm{u}}^b\in (H^1_{D,0})^2$.
\end{lemma}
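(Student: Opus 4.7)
The plan is to split $\overline{\mc F}(t,\bar{\bm u},\gamma) = \mc E(\bar{\bm u} + \bm w(t)) + \mc K(|\bar u_1 - \bar u_2|,\gamma)$ and estimate the convexity defect of each piece in $\bar{\bm u}$. The elastic part provides a definite strict convexity, while the cohesive part produces only $\lambda$-convexity, i.e.\ a controlled concave error; assumption \eqref{eq:hyplambda} is precisely what guarantees the first dominates the second in the $H^1$ norm.

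\textbf{Elastic piece.} Since $\bar{\bm u}\mapsto \mc E(\bar{\bm u}+\bm w(t))$ is a quadratic functional plus affine terms that cancel in the convexity identity, the standard parallelogram formula for quadratic forms gives
\begin{equation*}
\theta \mc E(\bar{\bm u}^a + \bm w(t)) + (1-\theta)\mc E(\bar{\bm u}^b + \bm w(t)) - \mc E(\theta \bar{\bm u}^a + (1-\theta)\bar{\bm u}^b + \bm w(t)) = \frac{\theta(1-\theta)}{2}\sum_{i=1}^2 \int_\Omega \C_i e(\bar u_i^a - \bar u_i^b):e(\bar u_i^a - \bar u_i^b)\d x .
\end{equation*}
Coercivity \ref{hyp:C5} followed by the Korn--Poincar\'e inequality \eqref{kornpoincare} applied to $\bar u_i^a - \bar u_i^b \in H^1_{D,0}$ then gives a lower bound of $\tfrac{c_1\wedge c_2}{2K_2^2}\theta(1-\theta)\|\bar{\bm u}^a-\bar{\bm u}^b\|_{H^1(\Omega)^2}^2$.

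\textbf{Cohesive piece.} Here I would show that the map $v\mapsto \Phi(|v|,z)$ inherits $\lambda$-convexity on $\R^n$ from the $\lambda$-convexity of $\Phi(\cdot,z)$ on $[0,+\infty)$, by combining property (iv) of Proposition~\ref{propPhi} with the monotonicity stated in (ii) and the convexity of the Euclidean norm. More precisely, for $v^a,v^b\in \R^n$ and $\theta\in[0,1]$, using $|\theta v^a + (1-\theta)v^b| \le \theta|v^a|+(1-\theta)|v^b|$, the monotonicity of $\Phi(\cdot,z)$, the $\lambda$-convexity of $\Phi(\cdot,z)$, and finally $||v^a|-|v^b||\le |v^a-v^b|$ yields pointwise
\begin{equation*}
\Phi(|\theta v^a + (1-\theta)v^b|,z) \le \theta\Phi(|v^a|,z) + (1-\theta)\Phi(|v^b|,z) + \frac{\lambda}{2}\theta(1-\theta)|v^a-v^b|^2 .
\end{equation*}
Applying this with $v^{a/b} = \bar u_1^{a/b} - \bar u_2^{a/b}$ and $z=\gamma(x)$, integrating over $\Omega$, and using $|v^a-v^b|^2 \le 2(|\bar u_1^a-\bar u_1^b|^2 + |\bar u_2^a-\bar u_2^b|^2)$, the cohesive convexity defect is bounded below by $-\lambda\,\theta(1-\theta)\|\bar{\bm u}^a-\bar{\bm u}^b\|_{H^1(\Omega)^2}^2$.

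\textbf{Combination.} Summing the two estimates yields \eqref{eq:unifconv} with
\begin{equation*}
\mu := \frac{c_1\wedge c_2}{K_2^2} - 2\lambda ,
\end{equation*}
which is strictly positive precisely by \eqref{eq:hyplambda}. The only step requiring a brief argument is the pointwise inequality for $\Phi(|\cdot|,z)$: one cannot simply invoke composition of a $\lambda$-convex function with a convex one, but the monotonicity of $\Phi(\cdot,z)$ lets us sandwich $\Phi(|\theta v^a+(1-\theta)v^b|,z)$ between $\Phi(\theta|v^a|+(1-\theta)|v^b|,z)$ and the $\lambda$-convex bound on the scalar line, after which the reverse triangle inequality produces a correction controlled by $|v^a-v^b|^2$. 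Everything else is direct bookkeeping with the stated hypotheses.
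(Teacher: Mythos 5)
Your proof is correct and follows essentially the same route as the paper: split $\overline{\mc F}$ into elastic and cohesive pieces, use the quadratic structure of $\mc E$ plus coercivity \ref{hyp:C5} and Korn--Poincar\'e for uniform convexity of the first, and for the second chain the convexity of the norm, the monotonicity of $\Phi(\cdot,z)$, the scalar $\lambda$-convexity from Proposition~\ref{propPhi}(iv), and the reverse triangle inequality, landing on $\mu = \tfrac{c_1\wedge c_2}{K_2^2} - 2\lambda$. The only cosmetic divergence is that the paper cites Proposition~\ref{propPhi}(ii) and (v) where your argument invokes (ii) and (iv); the substance of the estimate is identical.
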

\begin{proof}
	We fix $\theta\in [0,1]$ and $\bar{\bm{u}}^a,\bar{\bm{u}}^b\in (H^1_{D,0})^2$. By definition of $\overline{\mc F}$ we have
	\begin{equation}\label{eq:1}
		\overline{\mc F}(t,\theta\bar{\bm{u}}^a+(1-\theta)\bar{\bm{u}}^b,\gamma)=\mc E(\theta(\bar{\bm{u}}^a+\bm{w}(t))+(1-\theta)(\bar{\bm{u}}^b+\bm{w}(t)))+\mc K(|\theta(\bar u^a_1-\bar u^a_2)+(1-\theta)(\bar u^b_1-\bar u^b_2)|,\gamma).
	\end{equation}
	Since the elastic energy $\mc E$ is a quadratic form, by using \ref{hyp:C5} and Korn-Poincaré inequality \eqref{kornpoincare} we deduce
	\begin{equation}\label{eq:2}
		\begin{aligned}
			&\quad\,\mc E(\theta(\bar{\bm{u}}^a+\bm{w}(t))+(1-\theta)(\bar{\bm{u}}^b+\bm{w}(t)))\\
			&\le \theta\mc E(\bar{\bm{u}}^a+\bm{w}(t))+(1-\theta)\mc E(\bar{\bm{u}}^b+\bm{w}(t))-\frac{c_1\wedge c_2}{2}\theta(1-\theta)\|e(\bar{\bm{u}}^a-\bar{\bm{u}}^b)\|^2_{L^2(\Omega)^2}\\
			&\le \theta\mc E(\bar{\bm{u}}^a+\bm{w}(t))+(1-\theta)\mc E(\bar{\bm{u}}^b+\bm{w}(t))-\frac{c_1\wedge c_2}{2 K^2_2}\theta(1-\theta)\|\bar{\bm{u}}^a-\bar{\bm{u}}^b\|^2_{H^1(\Omega)^2}.
		\end{aligned}
	\end{equation}
On the other hand, by using $(ii)$ and $(v)$ in Proposition~\ref{propPhi} we also obtain that the cohesive energy $\mc K$ is $\lambda$-convex (in the sense of $L^2(\Omega)$) with respect to the first entrance, namely
\begin{equation}\label{eq:3}
	\begin{aligned}
		&\quad\,\mc K(|\theta(\bar u^a_1-\bar u^a_2)+(1-\theta)(\bar u^b_1-\bar u^b_2)|,\gamma)\le \mc K(\theta|\bar u^a_1-\bar u^a_2|+(1-\theta)|\bar u^b_1-\bar u^b_2|,\gamma)\\
		&\le \theta\mc K(|\bar u^a_1-\bar u^a_2|,\gamma)+(1-\theta)\mc K(|\bar u^b_1-\bar u^b_2|,\gamma)+\frac \lambda 2\theta(1-\theta)\||\bar u^a_1-\bar u^a_2|-|\bar u^b_1-\bar u^b_2|\|^2_{L^2(\Omega)}\\
		&\le \theta\mc K(|\bar u^a_1-\bar u^a_2|,\gamma)+(1-\theta)\mc K(|\bar u^b_1-\bar u^b_2|,\gamma)+\lambda \theta(1-\theta)\|\bar{\bm{u}}^a-\bar{\bm{u}}^b\|^2_{L^2(\Omega)^2}.
	\end{aligned}
\end{equation}
By plugging \eqref{eq:2} and \eqref{eq:3} into \eqref{eq:1} we finally obtain
\begin{align*}
	&\quad\,\overline{\mc F}(t,\theta\bar{\bm{u}}^a+(1-\theta)\bar{\bm{u}}^b,\gamma)\\
	&\le \theta\overline{\mc F}(t,\bar{\bm{u}}^a,\gamma)+(1-\theta)\overline{\mc F}(t,\bar{\bm{u}}^b,\gamma)-\frac{1}{2}\left(\frac{c_1\wedge c_2}{K_2^2}-2\lambda\right)\theta(1-\theta)\|\bar{\bm{u}}^a-\bar{\bm{u}}^b\|^2_{H^1(\Omega)^2}.
\end{align*}
We now conclude by setting $\mu:=\frac{c_1\wedge c_2}{K_2^2}-2\lambda$, which is positive by \eqref{eq:hyplambda}.
\end{proof}
\begin{cor}\label{cor:regularity}
	Assume \eqref{Omega}, \eqref{externalloading}, \eqref{initial}, \eqref{eq:hyplambda} and \eqref{Mininitial}. Then the limit function $\bm u$ obtained in Proposition~\ref{propconvsubs} belongs to $AC([0,T];H^1(\Omega;\R^n)^2)$. As a consequence the history slip $\delta_h$, defined in \eqref{historyslip}, can be computed as a pointwise supremum and belongs to $C^0([0,T]\times\Omega)$.
	
	Furthermore, the limit function $\gamma$ belongs to $C^0([0,T]\times\Omega)$ as well.
\end{cor}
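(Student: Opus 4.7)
The plan is to first establish that $\bm u\in AC([0,T];H^1(\Omega;\R^n)^2)$; once this is done, all the remaining assertions on $\delta_h$ and $\gamma$ will follow by relatively soft arguments.

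For the absolute continuity, I would work with the shifted displacement $\bar{\bm u}:=\bm u-\bm w$ and exploit the uniform convexity of $\overline{\mc F}(t,\cdot,\gamma)$ given by Lemma~\ref{lemma:convexity} (this is precisely why hypothesis \eqref{eq:hyplambda} was imposed). Testing the global stability \eqref{GS'} at time $t$ against the competitor $\bar{\bm v}=\bar{\bm u}(s)$ and invoking \eqref{eq:unifconv} yields
\[
\frac{\mu}{2}\,\|\bar{\bm u}(t)-\bar{\bm u}(s)\|^2_{H^1(\Omega)^2}\le\overline{\mc F}(t,\bar{\bm u}(s),\gamma(t))-\overline{\mc F}(t,\bar{\bm u}(t),\gamma(t)).
\]
Using \eqref{EB'} to replace $\overline{\mc F}(t,\bar{\bm u}(t),\gamma(t))$ by $\overline{\mc F}(s,\bar{\bm u}(s),\gamma(s))+\int_s^t\partial_t\overline{\mc F}(r,\bar{\bm u}(r),\gamma(r))\,\d r$ and then adding and subtracting $\overline{\mc F}(s,\bar{\bm u}(s),\gamma(t))$ turns the right-hand side into a time integral controlled via \eqref{eq:boundwork}, plus a residual cohesive contribution of the form $\mc K(|u_1(s)-u_2(s)|,\gamma(t))-\mc K(|u_1(s)-u_2(s)|,\gamma(s))$ recording the monotone growth of $\gamma$.

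The main obstacle is that this cohesive residual is nonnegative but not obviously small, since $\gamma$ is only a priori nondecreasing in time. Following the time-regularity trick of \cite{BonCavFredRiva}, I would symmetrize the argument by running the same procedure at time $s$ with competitor $\bar{\bm u}(t)$ and summing the two inequalities. The two $\gamma$-fluctuation terms then combine into a single quantity of the form
\[
\int_\Omega\int_{\gamma(s)}^{\gamma(t)}\bigl[\partial_z\Phi(|u_1(s)-u_2(s)|,z)-\partial_z\Phi(|u_1(t)-u_2(t)|,z)\bigr]\,\d z\,\d x,
\]
which, thanks to property $(v)$ of Proposition~\ref{propPhi} and to the pointwise comparison $|u_1(s)-u_2(s)|\le\gamma(s)$ from \eqref{gammasup}, lies entirely in the elastic regime $z\ge y$ where $\partial_z\Phi$ is Lipschitz in its first entry. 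A standard manipulation via Young's inequality, combined with the uniform $L^\infty$-bound on $\gamma$ inherited from Proposition~\ref{propconvsubs}, lets one absorb this cohesive term into $\|\bar{\bm u}(t)-\bar{\bm u}(s)\|_{H^1(\Omega)^2}^2$ up to a factor integrable in $r$, and a Gronwall argument then closes the estimate and yields $\bm u\in AC([0,T];H^1(\Omega;\R^n)^2)$.

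With absolute continuity of $\bm u$ in $H^1$ in hand, the joint continuity of $\bm u$ on $[0,T]\times\Omega$ follows by combining the resulting $H^1$-continuity in time with the uniform $C^{0,\alpha}_{\rm loc}$-bound from Proposition~\ref{propconvsubs} via a standard Arzel\`a--Ascoli argument on each $\Omega'\subset\subset\Omega$. Continuity of $|u_1-u_2|$ then converts the essential supremum in \eqref{historyslip} into a pointwise supremum, and the combination of monotonicity in $t$ with continuity in $x$ yields $\delta_h\in C^0([0,T]\times\Omega)$. Finally, the identification $\gamma=\delta_h$ (whence $\gamma\in C^0$) is obtained by passing to the limit in the explicit discrete formula $\gamma^\tau(t,\cdot)=\max\{|u_1^0-u_2^0|,\,|(u_1)^k-(u_2)^k|:t^k\le t\}$ unrolled from \eqref{discralg2}, using the locally uniform convergence \eqref{convtau} together with the continuity of $|u_1-u_2|$ just obtained, combined with the reverse inequality already recorded in \eqref{gammasup}.
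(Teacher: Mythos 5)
There are two genuine gaps.

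First, in the argument for $\bm u\in AC([0,T];H^1(\Omega;\R^n)^2)$: you test the global stability at time $t$ against $\bar{\bm u}(s)$, pick up a cohesive residual $\mc K(|u_1(s){-}u_2(s)|,\gamma(t))-\mc K(|u_1(s){-}u_2(s)|,\gamma(s))\ge 0$ on the wrong side of the inequality, and then try to repair it by symmetrizing and summing. But the combined residual
\[
\int_\Omega\int_{\gamma(s)}^{\gamma(t)}\bigl[\partial_z\Phi(|u_1(s){-}u_2(s)|,z)-\partial_z\Phi(|u_1(t){-}u_2(t)|,z)\bigr]\,\d z\,\d x
\]
is \emph{not} controllable the way you claim. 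The assertion that $\partial_z\Phi$ is Lipschitz in its first entry uniformly in $z\ge y$ is false: from the explicit formula $\partial_z\Phi(y,z)=\frac{\psi'(z)-z\psi''(z)}{2}(1-y^2/z^2)$ one has $|\partial_y\partial_z\Phi(y,z)|=\frac{(\psi'(z)-z\psi''(z))\,y}{z^2}$, whose $y$-Lipschitz constant behaves like $1/z$ and blows up as $z\to 0$; and there is no a priori bound on $\gamma(t)-\gamma(s)$ in terms of $\int_s^t\|\dot w\|$ at this stage, since $\gamma$ is only known to be monotone. Moreover $|u_1(t)-u_2(t)|\le\gamma(t)$ but not necessarily $\le\gamma(s)$, so the integrand does \emph{not} lie entirely in the elastic regime $z\ge y$ for the $t$-term. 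The correct move -- and what the paper actually does -- is to test stability only at time $s$ against the competitor $\bar{\bm u}(t)$: then the cohesive correction, namely replacing $\gamma(s)$ by $\gamma(t)\ge\gamma(s)$ in $\mc K(|u_1(t){-}u_2(t)|,\cdot)$, has the \emph{favourable} sign by property $(iii)$ of Proposition~\ref{propPhi}, and can simply be dropped. No symmetrization and no estimate on a residual are needed; one passes directly, via \eqref{EB'} and \eqref{eq:boundwork}, to $\frac{\mu}{2}\|\bar{\bm u}(t)-\bar{\bm u}(s)\|^2\le C\int_s^t\|\dot w(r)\|\,\|\bar{\bm u}(t)-\bar{\bm u}(r)\|\,\d r$ and closes by the Gronwall-type lemma.

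Second, obtaining $\gamma\in C^0([0,T]\times\Omega)$ via the identification $\gamma=\delta_h$ by ``passing to the limit in the discrete formula'' does not work here. The subsequences produced by Proposition~\ref{propconvsubs} are $t$-dependent, and turning the discrete maximum $\max_{t^k\le t}|(u_1)^k-(u_2)^k|$ into the continuous supremum $\sup_{s\le t}|u_1(s)-u_2(s)|$ in the limit would require convergence uniform jointly in $(t,x)$, which is not available. In fact $\gamma=\delta_h$ is non-trivial and is established only afterwards (Proposition~\ref{propgammadelta}), using both the continuity of $\gamma$ proved here and a differential argument resting on Lemma~\ref{lemma:limit}; using it now would be circular. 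The continuity of $\gamma$ is obtained in the paper directly from the energy balance: once $\bm u\in AC([0,T];H^1)$, the maps $t\mapsto\mc E(\bm u(t))$ and $t\mapsto\mc W(t)$ are absolutely continuous, hence by \eqref{EIless}--\eqref{EIgreat} so is $t\mapsto\mc K(|u_1(t){-}u_2(t)|,\gamma(t))$; combined with the strict monotonicity of $\Phi(y,\cdot)$ on $[y,+\infty)$ (again $(iii)$ of Proposition~\ref{propPhi}), this forces $\gamma^+(t)=\gamma^-(t)$ for every $t$.
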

\begin{proof}
	By combining \eqref{GS'} together with the uniform convexity \eqref{eq:unifconv} it is standard to infer
	\begin{equation*}
		\frac\mu 2 \|\bar{\bm{u}}(s)-\bar{\bm{v}}\|^2_{H^1(\Omega)^2}+\overline{\mc F}(s,\bar{\bm{u}}(s),\gamma(s))\le \overline{\mc F}(s,\bar{\bm{v}},\gamma(s)),\qquad\text{for all }(s,\bar{\bm{v}})\in [0,T]\times (H^1_{D,0})^2.
	\end{equation*}
We now fix two times $0\le s\le t\le T$ and by choosing $\bar{\bm{v}}=\bar{\bm{u}}(t)$ in the previous inequality we obtain
\begin{align*}
	\frac\mu 2 \|\bar{\bm{u}}(t)-\bar{\bm{u}}(s)\|^2_{H^1(\Omega)^2}\le \overline{\mc F}(s,\bar{\bm{u}}(t),\gamma(s))-\overline{\mc F}(s,\bar{\bm{u}}(s),\gamma(s)).
\end{align*}
Exploiting $(iii)$ in Proposition~\ref{propPhi} together with the monotonicity of $\gamma$, and recalling \eqref{EB'} and \eqref{eq:boundwork} we can continue the above inequality, deducing
\begin{align*}
	\frac\mu 2 \|\bar{\bm{u}}(t)-\bar{\bm{u}}(s)\|^2_{H^1(\Omega)^2}
	&\le \int_{s}^{t}|\partial_t\overline{\mc F}(r,\bar{\bm{u}}(r),\gamma(r))-\partial_t\overline{\mc F}(r,\bar{\bm{u}}(t),\gamma(t))|\d r\\
	&\le C\int_{s}^{t}\|\dot w(r)\|_{H^1(\Omega)}\|\bar{\bm u}(t)-\bar{\bm u}(r)\|_{H^1(\Omega)^2}\d r.
\end{align*}
This implies (see for instance \cite[Lemma~5.6]{GidRiv})
\begin{equation*}
	\|{\bm{u}}(t)-{\bm{u}}(s)\|_{H^1(\Omega)^2}\le\|\bar{\bm{u}}(t)-\bar{\bm{u}}(s)\|_{H^1(\Omega)^2}+\sqrt{2}\|w(t)-w(s)\|_{H^1(\Omega)}\le C\int_{s}^{t}\|\dot{w}(r)\|_{H^1({\Omega})}\d r,
\end{equation*}
and from \eqref{externalloading} we thus conclude that $\bm u$ belongs to $AC([0,T];H^1(\Omega;\R^n)^2)$.

Let us now prove the statements regarding $\delta_h$. For the sake of clarity, for all $t\in [0,T]$ we introduce the notation $\delta(t):=|u_1(t)-u_2(t)|$, so that $\delta_h(t)=\essup_{s\in [0,t]}\delta(s)$.

We now fix $\Omega'\subset\subset\Omega$ and $\alpha\in (0,1)$, and we observe that from the $\alpha$-H\"older continuity of $\bm u$ proved in Proposition~\ref{propconvsubs} we easily deduce
\begin{equation}\label{regdelta}
	\delta\in AC([0,T];L^2(\Omega))\cap B([0,T];C^{0,\alpha}(\overline{\Omega'}))\subseteq C^0([0,T];C^0(\overline{\Omega'})),
\end{equation}
where the inclusion follows by Ascoli-Arzel\'a Theorem. By exploiting \eqref{eq:pointsup}, one then deduces that also $\delta_h$ is in $B([0,T];C^{0,\alpha}(\overline{\Omega'}))$; moreover, by monotonicity, $\delta_h$ belongs to $BV([0,T];L^1(\Omega'))$ and so the right- and left- limits $\delta_h^\pm(t)$ are well defined for all $t\in [0,T]$ in the strong topology of $L^1(\Omega')$ (and by Ascoli-Arzelà even in a uniform sense) and they are ordered, i.e. $\delta_h^+(t)\ge \delta_h(t)\ge\delta_h^-(t)$. Since $\delta$ is continuous with value in $L^2(\Omega)$, necessarily there must hold $\delta_h^+(t)=\delta_h^-(t)$, hence $\delta_h$ is in $C^0([0,T];L^1(\Omega'))$. We have thus proved that
\begin{equation*}
	\delta_h\in C^0([0,T];L^1(\Omega'))\cap B([0,T];C^{0,\alpha}(\overline{\Omega'}))\subseteq C^0([0,T];C^0(\overline{\Omega'})),
\end{equation*}
which yields $\delta_h\in C^0([0,T]\times \Omega)$ by the arbitrariness of $\Omega'$.

Since the same argument can be performed also for the pointwise supremum, here denoted by $\overline{\delta_h}$, one also has $\overline{\delta_h}\in C^0([0,T]\times \Omega)$. By elementary properties of the supremum, this fact implies $\overline{\delta_h}\equiv \delta_h$, so we conclude the part about $\delta_h$.

We are only left to prove the continuity of $\gamma$. By \eqref{unifpm} we already know that $\gamma^\pm(t)$ are well-defined for all $t\in [0,T]$ as locally uniform limits in $\Omega$, and by monotonicity there holds $\gamma^+(t)\ge\gamma(t)\ge\gamma^-(t)$. So we need to prove that $\gamma^+(t)=\gamma^-(t)$ in order to conclude.

 Since we already proved that $\bm u$ is in $AC([0,T];H^1(\Omega;\R^n)^2)$, the map $t\mapsto\mc E(\bm u(t))$ turns out to be absolutely continuous in $[0,T]$; thus \ref{EB}, or better \eqref{EIless} and \eqref{EIgreat}, implies that the map $t\mapsto \mc K(|u_1(t)-u_2(t)|,\gamma(t))$ is absolutely continuous as well. In particular, for every $t\in [0,T]$ there holds
\begin{align*}
	\mc K(|u_1(t)-u_2(t)|,\gamma^+(t))&=\lim\limits_{s\to t^+}\mc K(|u_1(s)-u_2(s)|,\gamma(s))=\lim\limits_{s\to t^-}\mc K(|u_1(s)-u_2(s)|,\gamma(s))\\&=\mc K(|u_1(t)-u_2(t)|,\gamma^-(t)).
\end{align*}
Since by $(iii)$ in Proposition~\ref{propPhi} we know that $\Phi(y\cdot)$ is strictly increasing in $[y,+\infty)$, the above equality yields $\gamma^+(t)=\gamma^-(t)$ and we conclude. 
\end{proof}

In order to prove equality between $\gamma$ and $\delta_h$ we will need the following technical lemma.
\begin{lemma}\label{lemma:limit}
	Assume \eqref{Omega}, \eqref{externalloading}, \eqref{initial}, \eqref{eq:hyplambda} and \eqref{Mininitial}. Then the limit pair $(\bm u,\gamma)$ obtained in Proposition~\ref{propconvsubs} satisfies
	\begin{equation*}
		\lim\limits_{h\to 0}\int_{\Omega}\frac{\Phi(|u_1(t)-u_2(t)|,\gamma(t+h))-\Phi(|u_1(t)-u_2(t)|,\gamma(t))}{h}\d x=0,\qquad\text{for a.e. }t\in [0,T].
	\end{equation*}
\end{lemma}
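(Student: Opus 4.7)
The plan is to split the expression
\begin{equation*}
    I(t,h):=\int_\Omega \frac{\Phi(|u_1(t)-u_2(t)|,\gamma(t+h))-\Phi(|u_1(t)-u_2(t)|,\gamma(t))}{h}\d x
\end{equation*}
as $I(t,h)=[g(t+h)-g(t)]/h - A(t,h)/h$, where I set $g(s):=\mc K(|u_1(s)-u_2(s)|,\gamma(s))$ and $A(t,h):=\mc K(|u_1(t+h)-u_2(t+h)|,\gamma(t+h))-\mc K(|u_1(t)-u_2(t)|,\gamma(t+h))$, and then to identify the two limits as $h\to 0$.

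By Corollary~\ref{cor:regularity}, $\bm u\in AC([0,T];H^1(\Omega;\R^n)^2)$, so that $\mc E\circ\bm u$ is absolutely continuous with derivative $\sum_i\int_\Omega \mathbb C_i e(u_i(t)):e(\dot u_i(t))\d x$. Combined with the energy balance from Propositions~\ref{propEB1} and \ref{propEB2}, this forces $g\in AC([0,T])$ with
\begin{equation*}
    g'(t)=\sum_i\int_\Omega \mathbb C_i e(u_i(t)):e(\dot w(t)-\dot u_i(t))\d x\qquad\text{for a.e.\ }t\in[0,T].
\end{equation*}
For the second piece, at a.e.\ $t$ where $\bm u$ is differentiable in $H^1$, I would write $A(t,h)$ as a single integral via the fundamental theorem of calculus applied to $\Phi$ in its first variable, and then pass to the limit by dominated convergence. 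The key ingredients are the uniform bound $|\partial_y\Phi|\le \psi'(0)$ from Proposition~\ref{propPhi}(ii), the locally uniform continuity of $\gamma$ in time from Corollary~\ref{cor:regularity}, the continuity of $\partial_y\Phi$ on $[0,+\infty)^2\setminus\{(0,0)\}$, and the $L^2$--chain--rule convergence $(|u_1(t+h)-u_2(t+h)|-|u_1(t)-u_2(t)|)/h\to \dir(u_1(t)-u_2(t))\cdot(\dot u_1(t)-\dot u_2(t))$. Together they yield
\begin{equation*}
    \frac{A(t,h)}{h}\xrightarrow[h\to 0]{}Q(t):=\int_\Omega\partial_y\Phi(|u_1(t)-u_2(t)|,\gamma(t))\dir(u_1(t)-u_2(t))\cdot(\dot u_1(t)-\dot u_2(t))\d x.
\end{equation*}

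It remains to show $g'(t)=Q(t)$ a.e., and this is where the main subtlety lies. The plan is to derive it as an Euler--Lagrange identity from the global stability \eqref{eq:globmin}: testing with competitors $\bm v=\bm u(t)\pm h(\dot{\bm u}(t)-\dot{\bm w}(t))$, using the same dominated convergence as above, computes the right- and left-derivatives of $h\mapsto \mc F(t,\bm u(t)+h(\dot{\bm u}(t)-\dot{\bm w}(t)),\gamma(t))$ at $h=0$. These one-sided derivatives can differ only on the subset where $\Phi$ has a kink, namely $\{(|u_1(t)-u_2(t)|,\gamma(t))=(0,0)\}=\{\gamma(t)=0\}$, where the identification uses \eqref{gammasup}. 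The crucial observation, and the main obstacle, is that on $\{\gamma(t)=0\}$ one has $u_1(s,\cdot)=u_2(s,\cdot)$ for \emph{every} $s\in[0,t]$ by \eqref{gammasup}; hence the absolutely continuous $L^2$--valued map $s\mapsto \mathbf 1_{\{\gamma(t)=0\}}(u_1(s)-u_2(s))$ is identically zero on $[0,t]$, and so is its Bochner derivative at $s=t$. Consequently $\dot u_1(t)=\dot u_2(t)$ a.e.\ on $\{\gamma(t)=0\}$, the kink contribution is annihilated, the one-sided derivatives coincide, and one obtains the equality $\sum_i\int_\Omega \mathbb C_i e(u_i(t)):e(\dot u_i(t)-\dot w(t))\d x+Q(t)=0$. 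Combined with the expression for $g'(t)$ above, this gives $g'(t)=Q(t)$ and closes the argument.
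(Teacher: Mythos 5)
Your proposal is correct and follows essentially the same route as the paper: decompose the difference quotient into the $\gamma$-increment of $\mc K$ and the $\delta$-increment, identify the total derivative via the energy balance, and kill the $\delta$-increment via an Euler--Lagrange argument from \eqref{eq:globmin} with test direction $\dot{\bm u}(t)-\dot{\bm w}(t)$, using the observation that the kink of $\Phi$ at the origin only contributes on $\{\gamma(t)=0\}$ where $\dot u_1(t)=\dot u_2(t)$. The one point worth noting is that you spell out the justification of $\dot u_1(t)=\dot u_2(t)$ a.e.\ on $\{\gamma(t)=0\}$ via \eqref{gammasup} and the vanishing of the Bochner derivative of the truncated slip on $[0,t]$, which the paper simply asserts; your explanation is a genuine clarification of a step the paper glosses over.
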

\begin{proof}
	We adopt the same notation used in the proof of Corollary~\ref{cor:regularity}. Exploiting \eqref{regdelta}, by differentiating \ref{EB} we deduce that for a.e. $t\in [0,T]$ there holds
	\begin{align*}
		0&=\frac{\d}{\d t}\Big(\mc E(\bm u(t))-\mc W(t)+\mc K(\delta(t),\gamma(t))\Big)\\
		&=\sum_{i=1}^{2}\int_{\Omega}\mathbb{C}_i e(u_i(t)):e(\dot{u}_i(t)-\dot{w}(t))\d x+\lim\limits_{h\to 0}\int_{\Omega}\frac{\Phi(\delta(t+h),\gamma(t+h))-\Phi(\delta(t),\gamma(t))}{h}\d x\\
		&=\sum_{i=1}^{2}\int_{\Omega}\mathbb{C}_i e(u_i(t)):e(\dot{u}_i(t)-\dot{w}(t))\d x+\int_{\{\delta(t)>0\}}\partial_y\Phi(\delta(t),\gamma(t))\dot{\delta}(t)\d x\\
		&\quad +\lim\limits_{h\to 0}\int_{\Omega}\frac{\Phi(\delta(t),\gamma(t+h))-\Phi(\delta(t),\gamma(t))}{h}\d x,
	\end{align*}
where we took advantage of the continuity of $\partial_y\Phi$ in $[0,+\infty)^2\setminus\{(0,0)\}$ and we used the fact that $\partial_y\Phi(0,z)=0$ if $z>0$ (see $(ii)$ in Proposition~\ref{propPhi}).

We conclude if we show that the first line in the last equality above is equal to $0$. To this aim, by taking $\bm u(t)+h\bm\varphi$ as variations in \eqref{eq:globmin} and arguing similarly to Proposition~\ref{propeleq}, for all $\bm\varphi\in (H^1_{D,0})^2$ we deduce
\begin{align*}
	0&\le \sum_{i=1}^{2}\int_{\Omega}\mathbb{C}_i e(u_i(t)):e({\varphi}_i)\d x+\lim\limits_{h\to 0^+}\int_{\Omega}\frac{\Phi(|u_1(t){-}u_2(t)+h(\varphi_1{-}\varphi_2)|,\gamma(t))-\Phi(\delta(t),\gamma(t))}{h}\d x\\
	&=\sum_{i=1}^{2}\int_{\Omega}\mathbb{C}_i e(u_i(t)):e({\varphi}_i)\d x+\int_{\{\delta(t)>0\}}\partial_y\Phi(\delta(t),\gamma(t))\dir(u_1(t)-u_2(t))\cdot(\varphi_1-\varphi_2)\d x\\
	&\quad+\psi'(0)\int_{\{\gamma(t)=0\}}|\varphi_1-\varphi_2|\d x,
\end{align*}
	whence
	\begin{align*}
		&\qquad\left|\sum_{i=1}^{2}\int_{\Omega}\mathbb{C}_i e(u_i(t)):e({\varphi}_i)\d x+\int_{\{\delta(t)>0\}}\partial_y\Phi(\delta(t),\gamma(t))\dir(u_1(t)-u_2(t))\cdot(\varphi_1-\varphi_2)\d x\right|\\
		&\le \psi'(0)\int_{\{\gamma(t)=0\}}|\varphi_1-\varphi_2|\d x.	
	\end{align*}
By choosing $\bm{\varphi}=\dot{\bm{u}}(t)-\dot{\bm{w}}(t)\in (H^1_{D,0})^2$ and observing that
\begin{equation*}
	|\varphi_1-\varphi_2|=|\dot{u}_1(t)-\dot{u}_2(t)|=0,\qquad\text{ on the set }\{\gamma(t)=0\},
\end{equation*}
we deduce that
\begin{align*}
	0&= \sum_{i=1}^{2}\int_{\Omega}\mathbb{C}_i e(u_i(t)):e(\dot{u}_i(t){-}\dot{w}(t))\d x+\int_{\{\delta(t)>0\}}\!\!\!\!\!\!\!\!\!\!\!\partial_y\Phi(\delta(t),\gamma(t))\dir(u_1(t){-}u_2(t))\cdot(\dot{u}_1(t){-}\dot{u}_2(t))\d x\\
	&=\sum_{i=1}^{2}\int_{\Omega}\mathbb{C}_i e(u_i(t)):e(\dot{u}_i(t)-\dot{w}(t))\d x+\int_{\{\delta(t)>0\}}\partial_y\Phi(\delta(t),\gamma(t))\dot{\delta}(t)\d x,
\end{align*}
where we exploited the equality $\dot{\delta}(t)=\dir(u_1(t)-u_2(t))\cdot(\dot{u}_1(t)-\dot{u}_2(t))$, which holds true on the set $\{\delta(t)>0\}$. Thus we conclude.
\end{proof}

\begin{prop}\label{propgammadelta}
	Assume \eqref{Omega}, \eqref{externalloading}, \eqref{initial}, \eqref{eq:hyplambda} and \eqref{Mininitial}. Then for the limit pair $(\bm u,\gamma)$ obtained in Proposition~\ref{propconvsubs} one has that $\gamma$ coincides with $\delta_h$.
\end{prop}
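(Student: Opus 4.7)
The inclusion $\gamma \geq \delta_h$ is immediate: the pointwise inequality \eqref{gammasup} in Proposition~\ref{propconvsubs} gives $\gamma(t,x) \geq \sup_{s \in [0,t]} |u_1(s,x) - u_2(s,x)|$ for every $(t,x)$, and by Corollary~\ref{cor:regularity} this pointwise supremum coincides with the essential supremum defining $\delta_h(t,x)$.

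For the reverse inequality $\gamma \leq \delta_h$, my strategy is to use Lemma~\ref{lemma:limit} to show that, for a.e.\ $x \in \Omega$, the continuous nondecreasing function $t \mapsto \gamma(t,x)$ does not grow on the open set where it strictly exceeds $\delta(t,x) := |u_1(t,x) - u_2(t,x)|$. The key analytic ingredient is Proposition~\ref{propPhi}$(iii)$, which ensures $\partial_z \Phi(y,z) > 0$ on $\{z > y \geq 0\}$. Fixing $t$ in the full-measure set supplied by Lemma~\ref{lemma:limit} and a parameter $\eta > 0$, on the set $\Omega_\eta(t) := \{x \in \Omega : \gamma(t,x) - \delta(t,x) \geq \eta\}$ the explicit formula for $\partial_z \Phi$, the uniform $L^\infty$-bound on $\gamma$ (cf.\ \eqref{againbounds}), and the time-continuity of $\gamma$ together yield a constant $c(\eta) > 0$, independent of $x$ and of $h$, such that for all sufficiently small $h>0$
\[
\Phi(\delta(t,x),\gamma(t+h,x)) - \Phi(\delta(t,x),\gamma(t,x)) \geq c(\eta)\bigl(\gamma(t+h,x) - \gamma(t,x)\bigr), \qquad x \in \Omega_\eta(t).
\]
Because the corresponding integrand on all of $\Omega$ is nonnegative (by the monotonicity of $\gamma$ in time and of $\Phi(y,\cdot)$ in the second variable) and its integral over $\Omega$ vanishes as $h \to 0^+$ by Lemma~\ref{lemma:limit}, Fatou's lemma gives $\liminf_{h \to 0^+}(\gamma(t+h,x) - \gamma(t,x))/h = 0$ for a.e.\ $x \in \Omega_\eta(t)$. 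A countable exhaustion along $\eta = 1/n$ together with Fubini's theorem then produces a full-measure set of $x$ for which the right lower Dini derivative of $\gamma(\cdot,x)$ is zero at a.e.\ time $t$ with $\gamma(t,x) > \delta(t,x)$.

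The final step is a real-analysis argument, patterned on \cite[Proposition~3.11]{BonCavFredRiva}, converting this Dini-type condition into the pointwise bound $\gamma(\cdot,x) \leq \delta_h(\cdot,x)$. For each such $x$, arguing by contradiction, if $\gamma(t_0,x) > \delta_h(t_0,x)$ one considers $t_\ast := \inf\{t : \gamma(t,x) > \delta_h(t,x)\}$, which satisfies $\gamma(t_\ast,x) = \delta_h(t_\ast,x)$ by continuity and initial matching $\gamma(0,x) = \delta(0,x) = \delta_h(0,x)$; on the connected component of $\{\gamma(\cdot,x) > \delta_h(\cdot,x)\} \subseteq \{\gamma(\cdot,x) > \delta(\cdot,x)\}$ issuing from $t_\ast$ the function $\gamma(\cdot,x)$ must then be constant, contradicting $\gamma(t_0,x) > \delta_h(t_0,x) \geq \delta_h(t_\ast,x) = \gamma(t_\ast,x)$. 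Once this is established for a.e.\ $x$, the joint continuity of $\gamma$ and $\delta_h$ on $[0,T]\times\Omega$ (Corollary~\ref{cor:regularity}) extends the equality to every $(t,x)$. The main obstacle is precisely this last implication: a continuous nondecreasing function whose right Dini derivative vanishes almost everywhere need not be constant (as the Cantor staircase shows), so the bare Dini condition is not enough. I would overcome this by exploiting the absolute continuity in time of the map $t \mapsto \mc K(\delta(t),\gamma(t))$ coming from \ref{EB} (combined with the absolute continuity of $\mc E(\bm u(\cdot))$ granted by Corollary~\ref{cor:regularity} and of $\mc W$), together with the strict monotonicity of $\Phi(y,\cdot)$ on $[y,+\infty)$ from Proposition~\ref{propPhi}$(iii)$, which rule out any Cantor-type singular growth of $\gamma(\cdot,x)$ on the region $\{\gamma > \delta\}$ and thus close the argument.
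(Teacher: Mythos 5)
Your proposal uses the same three key ingredients as the paper's proof: the inequality $\gamma\ge\delta_h$ from \eqref{gammasup}, the uniform positive lower bound on $\partial_z\Phi$ on the region $\{z>y\}$ coming from Proposition~\ref{propPhi}$(iii)$, and Lemma~\ref{lemma:limit} combined with the absolute continuity of $t\mapsto\mc K(\delta(t),\gamma(t))$ from \ref{EB}. However, the \emph{order of localization} is different, and this is where the two routes diverge in a substantive way. The paper argues by contradiction at a fixed point $(\bar t,\bar x)$, immediately shrinks to a compact neighbourhood $[\bar t-\eta,\bar t]\times\overline{B_\eta(\bar x)}\subset\subset[0,T]\times\Omega$ where the gap $\gamma>\delta$ is quantitatively bounded away from zero, and on this set establishes that $t\mapsto\gamma(t)$ is absolutely continuous \emph{as an $L^1(B_\eta(\bar x))$-valued map}. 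The fundamental theorem of calculus for Bochner integrals then gives $\gamma(\bar t)=\gamma(\bar t-\eta)$ in $L^1(B_\eta)$, which is upgraded to an equality at $\bar x$ by continuity, and the contradiction follows by iterating back to $t=0$. You instead aim for a statement ``for a.e.\ $x$'' about the scalar function $\gamma(\cdot,x)$ and then run the contradiction pointwise in $x$.

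Your route has two technical gaps that the paper's localization avoids. First, the constant $c(\eta)$ on $\Omega_\eta(t):=\{x:\gamma(t,x)-\delta(t,x)\ge\eta\}$ is \emph{not} independent of $x$ over all of $\Omega$: the lower bound on $\partial_z\Phi$ in $\{z>y\}$ degenerates both as $z\downarrow y$ (handled by the $\eta$-gap) and as $z\uparrow\infty$ (since $\psi'(z)\to 0$), so you need an upper bound on $\gamma$. But the estimates \eqref{againbounds} give $\gamma\in B([0,T];C^{0,\alpha}(\overline{\Omega'}))$ only on $\Omega'\subset\subset\Omega$, and likewise the joint continuity from Corollary~\ref{cor:regularity} is interior; $\Omega_\eta(t)$ may touch $\partial\Omega$. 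You would have to intersect with a compact exhaustion and take care of the dependence of ``sufficiently small $h$'' on $x$ before Fatou applies. Second, and more importantly, the step you yourself flag as the obstacle --- ruling out Cantor-type growth --- is resolved in the paper precisely by proving $\gamma\in AC([\bar t-\eta,\bar t];L^1(B_\eta(\bar x)))$ from the estimate $c_\eta\|\gamma(t)-\gamma(s)\|_{L^1(B_\eta)}\le C\int_s^t f(r)\,\mathrm{d}r$; you state that the absolute continuity of $\mc K$ together with strict monotonicity of $\Phi(y,\cdot)$ should rule out a Cantor part of $\gamma(\cdot,x)$, but you do not actually carry out the passage from the integrated ($L^1$-valued) absolute continuity to pointwise-in-$x$ absolute continuity of $\gamma(\cdot,x)$. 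That passage is not automatic; it requires an argument (e.g.\ via Fubini and Lebesgue differentiation of monotone functions) which is precisely what the extra ``for a.e.\ $x$'' level of your plan demands, and which the paper sidesteps by never needing pointwise-in-$x$ absolute continuity --- only $L^1$-valued absolute continuity plus continuity of $\gamma$ in $x$ to evaluate at $\bar x$. In short, the conceptual backbone is right, but the pointwise-in-$x$ reformulation carries extra burdens that your sketch acknowledges but does not discharge, whereas the paper's ``localize at the contradiction point first'' strategy renders them unnecessary.
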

\begin{proof}
	We already know by \eqref{gammasup} that $\gamma\ge \delta_h$ in $[0,T]\times\Omega$; moreover $\gamma(0)=\delta_h(0)=|u^0_1-u^0_2|$, so let us assume by contradiction that there exists a point $(\bar t,\bar x)\in (0,T]\times\Omega$ such that
	\begin{equation}\label{eta}
		\gamma(\bar t, \bar x)>\delta_h(\bar t, \bar x).
	\end{equation}
By continuity of both functions (proved in Corollary~\ref{cor:regularity}) the above inequality can be extended in a suitable neighborhood of $(\bar t,\bar x)$, namely there exists $\eta>0$ such that
\begin{equation*}
	\gamma(t,x)>\delta_h(t,x)\ge\delta(t,x),\qquad\text{ for all }(t,x)\in [\bar t-\eta,\bar t\,]\times \overline{B_\eta(\bar x)},
\end{equation*}
where again we set $\delta(t)=|u_1(t)-u_2(t)|$.

Since $\partial_z\Phi$ is continuous and positive on the set $\{z>y\ge 0\}$ (see $(iii)$ in Proposition~\ref{propPhi}) we thus infer the existence of a constant $c_\eta>0$ such that
\begin{equation}\label{eq:lipbelow}
	\Phi(\delta(s,x),\gamma(t,x))-\Phi(\delta(s,x),\gamma(s,x))\ge c_\eta(\gamma(t,x)-\gamma(s,x)),
\end{equation}
for all $\bar t-\eta\le s\le t\le \bar t$ and $x\in  \overline{B_\eta(\bar x)}$.

We now recall that by \ref{EB} (see \eqref{EIless} and \eqref{EIgreat}) the map $t\mapsto \mc K(\delta(t),\gamma(t))$ is absolutely continuous in $[0,T]$, so for all $0\le s\le t\le T$ we obtain
\begin{equation}\label{eq:AC}
\begin{aligned}
	\int_{\Omega}(\Phi(\delta(s),\gamma(t))-\Phi(\delta(s),\gamma(s)))\d x &\le  \mc K(\delta(t),\gamma(t))-\mc K(\delta(s),\gamma(s))+C\psi'(0)\|\delta(t)-\delta(s)\|_{L^2(\Omega)}\\
	&\le C\int_{s}^{t}f(r)\d r,
\end{aligned}
\end{equation}
for a suitable $f\in L^1(0,T)^+$. In the first inequality above we employed $(ii)$ in Proposition~\ref{propPhi}, while we used \eqref{regdelta} in the second one.

By combining \eqref{eq:lipbelow} with \eqref{eq:AC} we finally obtain
\begin{equation*}
	C\int_{s}^{t}f(r)\d r\ge c_\eta\int_{B_\eta(\bar x)}(\gamma(t)-\gamma(s))\d x=c_\eta\|\gamma(t)-\gamma(s)\|_{L^1(B_\eta(\bar x))},\quad\text{for all }\bar t-\eta\le s\le t\le \bar t,
\end{equation*}
and so $\gamma$ belongs to the space $AC([\bar t-\eta,\bar t\,];L^1(B_\eta(\bar x)))$.

We now use Lemma~\ref{lemma:limit}, obtaining for a.e. $t\in [\bar t-\eta,\bar t\,]$
\begin{equation*}
	0=\lim\limits_{h\to 0}\int_{\Omega}\frac{\Phi(\delta(t),\gamma(t+h))-\Phi(\delta(t),\gamma(t))}{h}\d x\ge \limsup\limits_{h\to 0} c_\eta \int_{\Omega}\frac{\gamma(t+h)-\gamma(t)}{h}\ge 0,
\end{equation*}
whence $\gamma$ is strongly differentiable in $L^1(B_\eta(\bar x))$ and $\dot{\gamma}(t)=0$ almost everywhere in $[\bar t-\eta,\bar t\,]$. This implies that $\gamma(\bar t)=\gamma(\bar t-\eta)+\int_{\bar t-\eta}^{\bar t}\dot{\gamma}(r)\d r=\gamma(\bar t-\eta)$ as an equality in $L^1(B_\eta(\bar x))$. Since $\gamma$ is continuous we thus obtain $\gamma(\bar t,\bar x)=\gamma(\bar t-\eta, \bar x)$.

As a consequence, by using monotonicity of $\delta_h$, inequality \eqref{eta} is still true with $\bar t-\eta$ in place of $\bar t$; by repeating the previous argument we hence deduce that $\gamma(\bar t,\bar x)=\gamma(0,\bar x)$. This allows us to conclude, indeed it yields
\begin{align*}
	|u_1^0(\bar x)-u_2^0(\bar x)|=\gamma(0,\bar x)=\gamma(\bar t,\bar x)>\delta_h(\bar t, \bar x)\ge \delta_h(0,\bar x)=|u_1^0(\bar x)-u_2^0(\bar x)|,
\end{align*}
	which is a contradiction.
\end{proof}

	By collecting the results obtained in Propositions~\ref{propconvsubs}, \ref{propEB1}, \ref{propGS}, \ref{propEB2}, \ref{propgammadelta}  and Corollary~\ref{cor:regularity}, we finally conclude the proof of Theorem~\ref{mainthm}.
	
\section{Gradient damage model}\label{sec:damage}

In this last section we enhance the cohesive interface model by considering damageable elastic materials, namely we allow the two layers to undergo a damage process. The literature on damage models in elasticity is wide, we refer for instance to \cite{MielkRoub, Thom} and to \cite[Section 4.3.2]{MielkRoubbook} regarding the mathematical community, and to \cite{AleFredd1d, AleFredd2d, Nejar, PhamMarigo, Wu} for a more engineering point of view. Here, we focus on the so-called gradient damage models, characterized by the presence of a diffusive energy term depending exactly on the gradient of the damage.

In order to incorporate the damaging process in the model, we introduce the additional variable $\bm \alpha(t)=(\alpha_1(t),\alpha_2(t))$ evolving in time. The value $\alpha_i(t,x)\in [0,1]$ represents the amount of damage at time $t\in [0,T]$ of the point $x\in \Omega$ of the $i$-th layer: the value $0$ means that the material is completely sound, while the value $1$ corresponds to a fully damaged state. Since we do not allow healing of the elastic composite, the variables $\alpha_i$ are assumed to be nondecreasing with respect to time.

In the current framework, the total energy of the system can be described by the functional $\mc G\colon [0,T]\times H^1(\Omega;\R^n)^2\times W^{1,r}(\Omega)^2\times L^0(\Omega)^+\to [0,+\infty]$ which reads as
\begin{equation*}
	\mc G(t,\bm u,\bm\alpha,\gamma):=\begin{cases}
		\widetilde{\mc E}(\bm u,\bm\alpha)+\mc D(\bm\alpha)+\mc K(|u_1-u_2|,\gamma),&\text{if }(\bm u,\alpha)\in (H^1_{D,w(t)})^2\times W^{1,r}(\Omega;[0,1])^2 ,\\
		+\infty,&\text{otherwise,}
	\end{cases}
\end{equation*}
where the elastic energy, which now depends also on the damage variable, is given by
\begin{equation}\label{elasticdamage}
	\widetilde{\mc E}(\bm u,\bm\alpha)=\sum_{i=1}^{2}\frac 12 \int_{\Omega} \C_i(x,\alpha_i(x)) e(u_i(x)):e(u_i(x))\d x,
\end{equation}
while the internal and dissipated energy energy related to damage is defined by
\begin{equation}\label{damage}
	\mc D(\bm\alpha)=\sum_{i=1}^{2} \int_{\Omega} \left(w_i(\alpha_i(x))+\frac 1r|\nabla\alpha_i(x)|^r\right)\d x.
\end{equation}

We assume that the two elasticity tensors $\mathbb{C}_i\colon \Omega\times[0,1]\to\R^{n\times n\times n\times n}$ still satisfy conditions \ref{hyp:C1}-\ref{hyp:C5}, meaning that now they hold true for all $(x,\alpha)\in\Omega\times [0,1]$. Moreover we require
\begin{equation}\label{hypdamage}
	w_i\in C^0([0,1])^+,\qquad\text{and}\qquad r>n.
\end{equation}
The high integrability condition $r>n$, which affects the (gradient of the) damage variable $\bm\alpha$, is needed to ensure enough spatial regularity, crucial for our arguments.

In this setting we are able to prove the following result.
\begin{thm}\label{thm2}
	In addition to the hypotheses of Theorem~\ref{mainthm} (except for \eqref{eq:hyplambda}), assume \eqref{hypdamage} and let the initial damage variable $\bm\alpha^0$ belong to $W^{1,r}(\Omega;[0,1])^2$. Then there exists a triple $(\bm u,\bm\alpha,\gamma)\in B([0,T];H^1(\Omega;\R^n)^2\cap C^{0,\alpha}(\overline{\Omega'};\R^n)^2)\times B([0,T];W^{1,r}(\Omega)^2)\times B([0,T];C^{0,\alpha}(\overline{\Omega'})) $ for every $\alpha\in(0,1)$ and every $\Omega'\subset\subset\Omega$ which attains the initial conditions $\bm u(0)=\bm u^0$, $\bm \alpha(0)=\bm \alpha^0$, $\gamma(0)=|u^0_1-u_2^0|$ and satifies the following properties for all $t\in [0,T]$:
	\begin{enumerate}[label=\textup{(IR)}]
		\item the functions $\alpha_i$ and $\gamma$ are nondecreasing in time and $\gamma(t)\ge|u_1(t)-u_2(t)|$;
	\end{enumerate}
	\begin{enumerate}[label=\textup{(GS$\gamma$)}]
		\item \label{GSg} $\mc G(t,\bm u(t),\bm\alpha(t),\gamma(t))\le \mc G(t,\bm v,\bm \beta,\gamma(t)),\quad \text{ for every }(\bm v,\bm\beta)\in H^1(\Omega;\R^n)^2\times W^{1,r}(\Omega)^2$ such that $\beta_i\ge\alpha_i(t)$;
	\end{enumerate}
	\begin{enumerate}[label=\textup{(EB$\gamma$)}]
		\item \label{EBg} $\displaystyle \mc G(t,\bm u(t),\bm\alpha(t),\gamma(t))=\mc G(0,\bm u^0,\bm\alpha^0,|u_1^0-u_2^0|)+\widetilde{\mc W}(t);$
	\end{enumerate}
where now the work of the external forces takes the form
\begin{equation*}
	\widetilde{\mc W}(t)=\int_{0}^{t}\int_{\Omega}\sum_{i=1}^2\C_i(x,\alpha_i(s,x)) e(u_i(s,x)):e(\dot{w}(s,x))\d x\d s.
\end{equation*}	
\end{thm}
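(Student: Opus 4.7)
The plan is to mimic the Minimizing Movements scheme of Sections~\ref{sec:regularizedenergy} and \ref{sec:proof}, now performing at each step a joint constrained minimization in the displacement and damage pair $(\bm u,\bm\alpha)$. After regularizing the cohesive density to $\Phi_\eps$ as before, at each step $t^k=k\tau$ take a minimizer $(\bm u^k_\eps,\bm\alpha^k_\eps)$ of $\mc G_\eps(t^k,\cdot,\cdot,\gamma^{k-1}_\eps)$ over $(H^1_{D,w(t^k)})^2\times\{\bm\beta\in W^{1,r}(\Omega;[0,1])^2:\,\beta_i\ge(\alpha_i)^{k-1}_\eps\}$, and update $\gamma^k_\eps:=\gamma^{k-1}_\eps\vee|(u_1)^k_\eps-(u_2)^k_\eps|$. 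Existence of minimizers is standard, the monotonicity constraint being closed under weak $W^{1,r}$ convergence via Rellich and pointwise a.e. extraction.

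The uniform estimates are the heart of the argument. Using the admissible competitor $(\bm w(t^k),\bm\alpha^{k-1}_\eps)$ and the coercivity of both $\widetilde{\mc E}$ (by \ref{hyp:C5}) and of the gradient term in $\mc D$, one derives bounds for $\bm u^k_\eps$ in $H^1$ and for $\bm\alpha^k_\eps$ in $W^{1,r}$, uniform in $\eps$ and $\tau$. Here the hypothesis $r>n$ plays a pivotal double role: by Theorem~\ref{SobEmb}(c) the family $\{(\alpha_i)^k_\eps\}$ is uniformly bounded in $C^{0,\sigma}(\overline\Omega)$ for some $\sigma>0$, so the coefficients $\C_i(\cdot,(\alpha_i)^{k-1}_\eps(\cdot))$ satisfy \ref{hyp:C1} with a modulus of continuity independent of $k,\eps,\tau$. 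This is exactly what is needed to apply Theorem~\ref{thmregularity} to the Euler-Lagrange system for $\bm u$ (derived by variations in $\bm u$ alone, as in Proposition~\ref{propeleq}, with the now $x$-dependent tensors $\C_i(\cdot,(\alpha_i)^{k-1}_\eps(\cdot))$) with uniform constants, yielding the local $W^{1,p}$ and $C^{0,\alpha}$ bounds of Proposition~\ref{unifgamma} on both $\bm u^k_\eps$ and $\gamma^k_\eps$.

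Next pass to the limit $\eps\to 0$ as in Corollary~\ref{corconv} (nothing changes, since $\mc D$ depends only on $\bm\alpha$ and is lower semicontinuous under weak $W^{1,r}$ convergence), then $\tau\to 0$. Compactness of the piecewise constant interpolants comes from Banach-Alaoglu and Ascoli-Arzelà for $\bm u^\tau$, and from the monotone Helly theorem (Lemma~\ref{helly}, which applies verbatim due to the uniform local Hölder bounds) for the monotone variables $\gamma^\tau$ and each $\alpha_i^\tau$. The discrete energy inequality analogous to Proposition~\ref{propdiscrineq}, obtained by testing with $(\bm u^{j-1}+\bm w(t^j)-\bm w(t^{j-1}),\bm\alpha^{j-1})$, passes to the upper inequality in \ref{EBg} along the same lines of Proposition~\ref{propEB1}.

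The main obstacle, and the only point that genuinely departs from the proof of Theorem~\ref{mainthm}, is the derivation of the global stability \ref{GSg} at the limit: since the minimization is constrained by the monotonicity of damage, a competitor for the limit problem is not directly admissible at the discrete level and a mutual recovery sequence must be built. Given $(\bm v,\bm\beta)$ with $\beta_i\ge\alpha_i(t)$, set $\bm v^\tau:=\bm v+\bm w(t^\tau)-\bm w(t)$ and $\beta_i^\tau:=\beta_i\vee\alpha_i^\tau(t)$; the constraint $\beta_i^\tau\ge\alpha_i^\tau(t)$ holds automatically and the Lipschitz operation $\vee$ together with the strong $W^{1,r}$ convergence of $\alpha_i^\tau(t)$ to $\alpha_i(t)$ (obtained by upgrading Helly to strong convergence via the uniform Hölder bound) yields $\beta_i^\tau\to\beta_i$ strongly in $W^{1,r}$. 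Joint continuity of $\C_i$ in $(x,\alpha)$ and Dominated Convergence then give $\mc G(t^\tau,\bm v^\tau,\bm\beta^\tau,\gamma^\tau(t))\to\mc G(t,\bm v,\bm\beta,\gamma(t))$, producing \ref{GSg}; the lower energy inequality then follows from \ref{GSg} by a Riemann-sum argument as in Proposition~\ref{propEB2}, while $\gamma(t)\ge|u_1(t)-u_2(t)|$ is inherited from \eqref{gammasup}. Note finally that \eqref{eq:hyplambda} is dropped because no uniform convexity, and hence no time regularity of $\bm u$ nor identification $\gamma=\delta_h$, is claimed: accordingly, Theorem~\ref{thm2} is stated directly in terms of the abstract irreversible variable $\gamma$.
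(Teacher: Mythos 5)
Your proposal follows the paper's architecture closely: regularize $\Phi$, run the Minimizing Movements with a constrained joint minimization in $(\bm u,\bm\alpha)$, use $r>n$ to get uniform H\"older control of the discrete damage and hence a uniform modulus of continuity for $\C_i(\cdot,(\alpha_i)^k_\eps(\cdot))$, invoke Theorem~\ref{thmregularity} for local $W^{1,p}$ bounds, pass to the limit via Helly and Ascoli--Arzel\`a, and correctly drop \eqref{eq:hyplambda} since the identification $\gamma=\delta_h$ is not claimed. Two steps, however, are not sound.

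The more serious gap is the mutual recovery sequence $\beta_i^\tau:=\beta_i\vee\alpha_i^\tau(t)$. This requires strong $W^{1,r}$ convergence of $\alpha_i^\tau(t)$ to $\alpha_i(t)$, which you claim to obtain by ``upgrading Helly via the uniform H\"older bound''. That upgrade does not exist: the $C^{0,\alpha}$ bound improves the weak $W^{1,r}$ convergence of $\alpha_i^\tau(t)$ only to locally uniform convergence, not to strong $W^{1,r}$ convergence, and there is no equation or extra compactness controlling $\nabla\alpha_i^\tau(t)$. Without strong $L^r$ convergence of the gradients, $\int_{\{\beta_i\le\alpha_i^\tau(t)\}}|\nabla\alpha_i^\tau(t)|^r\d x$ need not converge to the correct limit (the functional is only weakly lower semicontinuous, which is the wrong inequality for a recovery sequence). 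The paper circumvents this by taking $\beta_i^{\tau_j(t)}:=\bigl(\beta_i+\|\alpha_i^{\tau_j(t)}(t)-\alpha_i(t)\|_{C^0(\overline\Omega)}\bigr)\wedge 1$: admissibility follows from $\beta_i\ge\alpha_i(t)$, and strong $W^{1,r}$ convergence to $\beta_i$ is immediate because one only shifts by a vanishing constant and truncates at $1$ (which is continuous on $W^{1,r}$ by dominated convergence, using $\nabla\beta_i=0$ a.e.\ on $\{\beta_i=1\}$). This is the one place where the proof genuinely departs from Theorem~\ref{mainthm}, and it is precisely the piece your construction does not deliver.

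Second, a smaller but real issue with the a priori estimates: testing \eqref{mm3} with the competitor $(\bm w(t^k),\bm\alpha^{k-1}_\eps)$ yields $\mc D(\bm\alpha^k_\eps)\le\mc D(\bm\alpha^{k-1}_\eps)+C$, which telescopes to a bound of order $T/\tau$, not uniform in $\tau$. The paper instead tests against $(\bm w(t^k),\bm 1)$, which is admissible since $(\alpha_i)^{k-1}_\eps\le 1$ and gives $\mc D(\bm\alpha^k_\eps)\le\mc D(\bm 1)+C$ in a single step. (One can alternatively read the $W^{1,r}$ bound off the discrete energy inequality, but that inequality already relies on the competitor $(\bm u^{j-1}+\bm w(t^j)-\bm w(t^{j-1}),\bm\alpha^{j-1})$ to make the $\mc D$-terms cancel in the telescoping sum, which is not the competitor you invoke for the bounds.)
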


Unlike Theorem~\ref{mainthm}, in the current situation we are not able to get rid of the fictitious variable $\gamma$, showing the equality $\gamma=\delta_h$. Indeed, the argument developed in Section~\ref{subsec:gammadelta}, based on time-continuity, here breaks down due to the lack of uniform convexity of the functional $(\bm u, \bm \alpha)\mapsto \widetilde{\mc E}(\bm u,\bm\alpha)+\mc D(\bm \alpha)$. As shown in \cite[Lemma 5.1 and Corollary 5.4]{Thom}, the latter turns out to be uniformly convex (under some structural assumption on the tensors $\mathbb{C}_i$) if the exponent $r$ belongs to $(1,2]$; on the other hand, as it will be explained later, we need to require $r>n$ in order to apply the key result Theorem~\ref{thmregularity}.

\subsection{Proof of Theorem~\ref{thm2}}
Theorem~\ref{thm2} can be proved following the same steps presented in Sections~\ref{sec:regularizedenergy} and \ref{sec:proof}. We now sketch the whole argument highlighting the changes which the presence of the damage variable induces. We also refer to \cite{BonCavFredRiva} for the details in the one-dimensional setting.

We first consider the regularized cohesive energy $\mc K_\eps$ introduced in Section~\ref{sec:regularizedenergy} and, analogously to Section~\ref{subsec:minmov}, we perform the following Minimizing Movements scheme
\begin{equation}\label{mm3}
	\begin{cases}\displaystyle
		(\bm u^k_\eps,\alpha^k_\eps)\in\argmin_{\substack{{\bm{v}\in H^1(\Omega;\R^n)^2,}\\{\bm\beta\in W^{1,r}(\Omega)^2\text{ s.t. }\beta_i\ge (\alpha_i)^{k-1}_\eps}}}\mc G_\eps(t^k,\bm{v},\bm\beta, \gamma^{k-1}_\eps),\\
		\gamma^k_\eps:=\gamma^{k-1}_\eps\vee|(u_1)^k_\eps-(u_2)^k_\eps|,	\\
			\bm u^0_\eps:=\bm u^0,\quad \bm \alpha^0_\eps:=\bm \alpha^0,\quad \gamma^0_\eps:=|u^0_1-u^0_2|,		
	\end{cases}
\end{equation}
where the constraint $\beta_i\ge (\alpha_i)^{k-1}_\eps$ is needed to enforce irreversibility of the damage variables. We point out that the existence of minimizers again follows by the direct method of the Calculus of Variations: coercivity in the weak topology of $H^1(\Omega;\R^n)\times W^{1,r}(\Omega)^2$ comes from the explicit expressions \eqref{elasticdamage} and \eqref{damage}, while lower semicontinuity, which is nontrivial only for the elastic term $\widetilde{\mc E}(\bm u,\bm \alpha)$, follows by means of the Ioffe-Olach Theorem (see \cite[Theorem~{2.3.1}]{Buttazzo}).

By arguing as in Proposition~\ref{prop:boundH1}, but testing against the pair $(\bm w(t^k),\bm 1)$, one deduces the following uniform bounds:
\begin{equation}\label{boundWr}
	\max\limits_{k=0,\dots,T/\tau}\Vert \bm u^k_\eps\Vert_{H^{1}(\Omega)^2}\le C,\qquad \max\limits_{k=0,\dots,T/\tau}\Vert \bm \alpha^k_\eps\Vert_{W^{1,r}(\Omega)^2}\le C.
\end{equation}

By computing the Euler-Lagrange equations of the functional $\bm u\mapsto \mc G_\eps(t^k,\bm u,\bm\alpha^k_\eps,\gamma^{k-1}_\eps)$, it turns out that the discrete displacements $\bm u^k_\eps$ are weak solutions of 
\begin{equation*}
	\begin{cases}
		-\div(\mathbb{C}_1((\alpha_1)^k_\eps) e(u_1))=-\partial_y\Phi_\eps(|u_1-u_2|,\gamma^{k-1}_\eps)\dir(u_1-u_2),&\text{in }\Omega,\\
		-\div(\mathbb{C}_2((\alpha_2)^k_\eps) e(u_2))=\partial_y\Phi_\eps(|u_1-u_2|,\gamma^{k-1}_\eps)\dir(u_1-u_2),&\text{in }\Omega.
	\end{cases}
\end{equation*} 
From \eqref{boundWr}, since $r>n$, Sobolev Embedding Theorem yields that the damage variables $\bm\alpha^k_\eps$ are uniformly continuous in $\Omega$ with moduli of continuity independent of $\eps$ and $\tau$ (they are uniformly $\alpha$-H\"older for a suitable value of $\alpha\in(0,1)$).

This implies that we are still in a position to apply Theorem~\ref{thmregularity} and, arguing as in Proposition~\ref{unifgamma}, we also deduce the additional bounds
\begin{equation*}
	\max\limits_{k=0,\dots, T/\tau}\Vert  \bm u^k_\eps\Vert_{\rm{C^{0,\alpha}}(\overline\Omega')^2}\le C,\qquad\max\limits_{k=0,\dots, T/\tau}\Vert  \gamma^k_\eps\Vert_{\rm{C^{0,\alpha}}(\overline\Omega')}\le C,
\end{equation*}
 where $\alpha\in (0,1)$ and $\Omega'\subset\subset \Omega$ are arbitrary. As a consequence, we deduce the same results of Corollary~\ref{corconv} with in addition
 \begin{equation*}
 	\bm\alpha^k_{\eps_j}\xrightharpoonup[j\to +\infty]{W^{1,r}(\Omega)^2}\bm \alpha^k,
 \end{equation*}
 and with \eqref{discralg2} obviously replaced with the non-regularized version of \eqref{mm3}.
 
 We then consider the piecewise constant interpolants $\bm u^\tau$, $\bm\alpha^\tau$ and $\gamma^\tau$ as in \eqref{interpolants}. We now observe that the following discrete energy inequality 
 \begin{equation*}
 	\mc G(t^\tau,\bm u^\tau(t), \bm \alpha^\tau(t),\gamma^\tau(t))\le \mc G(0,\bm u^0,\bm\alpha^0,|u_1^0-u^0_2|)+\widetilde{\mc W}^\tau(t)+R^\tau,
 \end{equation*}
 can be proven as in Proposition~\ref{propdiscrineq} by testing against $(\bm u^{j-1}+\bm w(t^j)-\bm w(t^j-1),\bm\alpha^{j-1})$.
 
 As in Proposition~\ref{propconvsubs} we can extract convergent subsequences of the piecewise constant interpolants: additionally, for the damage variable we have
 \begin{equation}\label{convalpha}
 	\bm\alpha^{\tau_j(t)}(t)\xrightharpoonup[j\to +\infty]{W^{1,r}(\Omega)^2}\bm \alpha(t),
 \end{equation}
where $\alpha\in B([0,T];W^{1,r}(\Omega;[0,1])^2)$ is nondecreasing and attains the initial datum $\bm\alpha(0)=\bm\alpha^0$.

The validity of \ref{GSg} for the triple $(\bm u,\bm\alpha,\gamma)$ follows by arguing as in Proposition~\ref{propGS} taking as a competitor for the damage variable the function $\bm\beta^{\tau_j(t)}$ whose components are given by \begin{equation*}
	\beta^{\tau_j(t)}_i:=\Big(\beta_i+\|\alpha^{\tau_j(t)}_i(t)-\alpha_i(t)\|_{C^0(\overline{\Omega})}\Big)\wedge 1.
\end{equation*}
Since $\alpha^{\tau_j(t)}_i(t)-\alpha_i(t)$ vanishes uniformly in $\overline\Omega$ as $j\to +\infty$ due to \eqref{convalpha} combined with Sobolev Embedding Theorem, one can indeed prove that $\bm\beta^{\tau_j(t)}$ strongly converges in the sense of $W^{1,r}(\Omega)^2$ to the arbitrary function $\bm\beta$.

The validity of \ref{EBg} instead  can be proved exactly as in Propositions~\ref{propEB1} and \ref{propEB2}, and so we finally conclude the proof of Theorem~\ref{thm2}.
		\bigskip
		
		\noindent\textbf{Acknowledgements.}
		The author is member of the Gruppo Nazionale per l'Analisi Matematica, la Probabilità e le loro Applicazioni (GNAMPA) of the Istituto Nazionale di Alta Matematica (INdAM) and acknowledges its support through the INdAM-GNAMPA Project 2022 \lq\lq MATERIA: Metodi Analitici nella Trattazione di Evoluzioni Rate-independent e Inerziali e Applicazioni\rq\rq, code CUP\textunderscore E55F22000270001.
		
		The author would also like to thank Francesco Freddi and Flaviana Iurlano for fruitful discussions on the considered model.
		\bigskip

		{\small
		
		\vspace{15pt} (Filippo Riva) Universit\`{a} degli Studi di Pavia, Dipartimento di Matematica ``Felice Casorati'', \par
		\textsc{Via Ferrata, 5, 27100, Pavia, Italy}
		\par
		\textit{e-mail address}: \textsf{filippo.riva@unipv.it}
		\par
		\textit{Orcid}: \textsf{https://orcid.org/0000-0002-7855-1262}
		\par
		
	}
	
\end{document}